\newtheorem{theorem}{Theorem}[section]
\newtheorem{lemma}[theorem]{Lemma}
\newtheorem{proposition}[theorem]{Proposition}
\newtheorem{corollary}[theorem]{Corollary}
\theoremstyle{definition}
\newtheorem{example}[theorem]{Example}
\theoremstyle{remark}
\newtheorem{remark}[theorem]{Remark}
\numberwithin{equation}{section}
\def\tcalta{\mathcal{T}(\theta, \alpha)}
\def\ata{A^{\theta,\alpha}}
\def\aat{A^{\alpha,\theta}}
\def\nk0t{\|\tilde k_0^\theta\|^{-2}}
\begin{document}
\setcounter{page}{1}

\title[Asymmetric truncated Toeplitz operators]{Characterizations of asymmetric truncated Toeplitz operators}

\author[C. C\^amara, J. Jurasik, K. Kli\'s-Garlicka, \MakeLowercase{and} M. Ptak]{Crisina C\^amara,$^1$ Joanna Jurasik,$^2$ Kamila Kli\'s-Garlicka,$^3$\\ \MakeLowercase{and} Marek Ptak $^3$ $^{*}$}

\address{$^{1}$ Center for Mathematical Analysis, Geometry and Dynamical Systems, Mathematics
Department, Instituto Superior T´ecnico, Universidade de Lisboa, Av. Rovisco Pais, 1049-
001 Lisboa, Portugal.}\email{\textcolor[rgb]{0.00,0.00,0.84}{ccamara@math.ist.utl.pt}}

\address{$^{2}$ Department of Mathematics
Maria Curie-Sk\l odowska University
20-031 Lublin, Poland}
\email{\textcolor[rgb]{0.00,0.00,0.84}{Joanna.Blicharz@live.umcs.edu.pl}}

\address{$^{3}$ Department of Applied Mathematics,
University of Agriculture, ul. Balicka 253c,\\ 30-198 Krak\'ow, Poland.}
\email{\textcolor[rgb]{0.00,0.00,0.84}{rmklis@cyfronet.pl}}

\address{$^{*}$Institute of Mathematics, Pedagogical University, ul. Podchor\c a\.zych 2, \\30-084 Krak\'ow, Poland.}
\let\thefootnote\relax\footnote{Copyright 2016 by the Tusi Mathematical Research Group.}
\email{\textcolor[rgb]{0.00,0.00,0.84}{rmptak@cyfronet.pl}}

\subjclass[2010]{ Primary: 47B35; Secondary: 30H10, 47A15.}
\keywords{Model space, truncated Toeplitz operator, kernel functions, conjugation.}

\date{Received: xxxxxx; Revised: yyyyyy; Accepted: zzzzzz.
\newline \indent $^{*}$Corresponding author}


\begin{abstract}
The aim of this paper is to investigate asymmetric truncated Toeplitz operators with $L^2$ symbols between two different model spaces given by inner functions such that one divides the other. The class of symbols corresponding to the zero operator is described. Asymmetric truncated Toeplitz operators are characterized in terms of operators of rank at most two, and the relations with the corresponding symbols are studied.

\end{abstract}

\maketitle

\section{Introduction}


Toeplitz operators on the Hardy space $H^2$, which are compositions of  a multiplication operator and the orthogonal projection from $L^2$ onto $H^2$, constitute a classical topic in operator theory. In his important paper (\cite{Sarason}) Sarason explored truncated Toeplitz operators, thus generating huge interest in this class of operators; see, for example
\cite{BCT, CMP, CCJP1, CGP, CT, CRW, GMR, GRW}.
Instead of the Hardy space $H^2$, they act on a model space $K^2_\theta=H^2\ominus\theta H^2$ associated with a given nonconstant inner function $\theta$ and a multiplication operator is composed with the orthogonal projection from $H^2$ onto $K^2_\theta$.

This work was inspired, on the one hand, by the work of Sarason, 
and, on the other hand, by \cite{CCJP}, where asymmetric  truncated Toeplitz operators  were introduced (in the context of the Hardy space $H^p$ of the half-plane, with $1<p<\infty$) and studied in the case of bounded symbols. Asymmetric  truncated Toeplitz operators involve the composition of a multiplication operator with two projections from $H^2$ onto a model space, associated with (possibly different) nonconstant inner functions $\alpha$ and $\theta$. They are a natural generalization of rectangular Toeplitz matrices, which appear in various contexts, such as in the study of finite-time convolution equations, signal processing, control theory, probability, approximation theory, diffraction problems (see for instance \cite{AC, TB, BCT, Gut, HRost, Rad,  Speck}).  

Here we consider bounded asymmetric truncated Toeplitz operators with $L^2$ symbols, defined between two model spaces
 $K^2_\theta$ and $K^2_\alpha$, where $\alpha$ divides $\theta$ ($\alpha\leqslant \theta$). We study various properties of these operators and their relations with the corresponding symbols, and we obtain a necessary and sufficient condition for a bounded operator between two model spaces to be an asymmetric truncated Toeplitz operator in terms of rank two operators, thus generalizing a corresponding result by Sarason for the case where $\alpha=\theta$. In the asymmetric case, however, a more complex connection between the operators and their symbols is revealed, which is not apparent when the two model spaces involved are the same.

The paper is organized as follows.
In Section 2 we present some auxiliary results on model spaces, their direct sum decompositions, the associated projections and conjugations. In Section 3 asymmetric truncated Toeplitz operators with $L^2$ symbols are introduced and some of their basic properties are presented. In Section 4
the class of all possible symbols for a given asymmetric truncated Toeplitz operator is described, and a necessary and sufficient condition for the operator to be zero is obtained. In Sections 5 and 6 we generalize Sarason's characterization of truncated Toeplitz operators in terms of operators of rank two at most, highlighting the similarities and differences between the symmetric and the asymmetric cases and, in Section 7,  we obtain conditions under which that characterization can be done in terms of a rank one operator. In Section 8 we address the inverse problem of determining a symbol for a given asymmetric truncated Toeplitz operator, in terms of the action of $A$ and its adjoint on certain reproducing kernel functions, as well as in terms of the characterizations presented in Sections 5 and 6.

 \section{Model spaces, conjugations and decompositions}

Let $L^2$ denote the space $L^2(\mathbb{T},m)$, where $\mathbb{T}$ is the unit circle and $m$ is the normalized Lebesgue measure on $\mathbb{T}$, and let $H^2$ be the Hardy space on the unit disc $\mathbb{D}$, identified as usual with a subspace of $L^2$.  Similarly,  $L^\infty=L^\infty(\mathbb{T},m)$ and we denote by $H^\infty$ the space of all analytic and bounded functions on $\mathbb{D}$.  Denoting by $H_0^2$ the subspace consisting of all functions in $H^2$ which vanish at $0$, we have $L^2\ominus H^2=\overline{H_0^2}$, and we denote by $P$ and $P^-$ the orthogonal projections from $L^2$ onto $H^2$ and $\overline{H_0^2}$, respectively. 


 With any given  inner function $\theta$ we associate the so called {\it model space} $K^2_\theta$, defined by $K^2_\theta=H^2\ominus \theta H^2$. We also have $K^2_\theta= H^2\cap\theta\, \overline{H_0^2}$, and thus
\begin{equation}\label{A}
f\in K^2_\theta\quad\text{if and only if}\quad \bar\theta f\in\overline{H_0^2}\quad\text{and}\ f\in H^2.
\end{equation}
In particular, if $f\in K^2_\theta$ then $\theta \bar f\in H^2_0$.
Let $P_\theta$ be the orthogonal projection $P_\theta\colon L^2\to K^2_\theta$.
We have the following:

\begin{proposition}\label{proj} Let $\theta$ be a nonconstant inner function and let $K^2_\theta=H^2\ominus \theta H^2$ be the associated model space.  Then
\begin{enumerate}
\item $P_{\theta}=\theta P^-\, \bar{\theta}P=\theta P^-\, \bar{\theta}-P^-$,
\item $P_{\theta}f=\theta P^-\, \bar{\theta}f=f-\theta P\bar\theta f$ for all $f \in H^2$,
\item $P_\theta \bar f=P_\theta P \bar f=\overline{f(0)}P_\theta1=\overline{f(0)}(1-\overline{\theta(0)}\theta)$ for all $f \in H^2$.
\end{enumerate}
\end{proposition}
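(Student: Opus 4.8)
My plan is to establish part (2) first, since both (1) and (3) then follow with little extra work. Let $f \in H^2$. Splitting $\bar\theta f \in L^2$ through the decomposition $L^2 = H^2 \oplus \overline{H_0^2}$ (so that $P + P^- = I$) gives $\bar\theta f = P\bar\theta f + P^-\bar\theta f$, and multiplying by $\theta$ yields
\begin{equation*}
f = \theta P\bar\theta f + \theta P^-\bar\theta f .
\end{equation*}
I would then check that this is precisely the orthogonal decomposition of $f$ along $H^2 = K^2_\theta \oplus \theta H^2$: the term $\theta P\bar\theta f$ lies in $\theta H^2$ since $P\bar\theta f \in H^2$, while the term $\theta P^-\bar\theta f$ lies in $K^2_\theta$. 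The latter is the key verification: $\theta P^-\bar\theta f$ belongs to $\theta\,\overline{H_0^2}$ by construction, and it equals $f - \theta P\bar\theta f \in H^2$, so by the characterization \eqref{A} it lies in $H^2 \cap \theta\,\overline{H_0^2} = K^2_\theta$. Uniqueness of the orthogonal decomposition then gives $P_\theta f = \theta P^-\bar\theta f = f - \theta P\bar\theta f$, which is (2).

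For (1), I would first note that $P_\theta = P_\theta P$, because $K^2_\theta \subseteq H^2$ forces $P_\theta$ to annihilate $\overline{H_0^2} = (H^2)^\perp$. Applying (2) to $Pg$ for an arbitrary $g \in L^2$ then yields $P_\theta = \theta P^-\bar\theta P$, the first expression in (1). To reach the second expression I would substitute $P = I - P^-$ and show that $\theta P^-\bar\theta P^- = P^-$; this reduces to the inclusion $\overline{H_0^2} \subseteq \theta\,\overline{H_0^2}$, equivalently $\theta H_0^2 \subseteq H_0^2$, which holds because $\theta$ is inner. Then $\theta P^-\bar\theta P = \theta P^-\bar\theta - P^-$, as claimed. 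Finally, for (3) I would use $P_\theta = P_\theta P$ again together with $P\bar f = \overline{f(0)}$ (the constant term of $\bar f$), reducing $P_\theta\bar f$ to $\overline{f(0)}\,P_\theta 1$; computing $P_\theta 1$ from (2) with $f = 1$ and $P\bar\theta = \overline{\theta(0)}$ gives $P_\theta 1 = 1 - \overline{\theta(0)}\theta$.

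The computations are all elementary; the only points requiring care are the two membership arguments -- that $\theta P^-\bar\theta f \in K^2_\theta$ via \eqref{A}, and that $\overline{H_0^2} \subseteq \theta\,\overline{H_0^2}$ -- both of which rest on the defining properties of the inner function $\theta$ and on keeping track of which projection acts on which side. I expect the identity $\theta P^-\bar\theta P^- = P^-$ to be the subtlest step, since it is the one place where the inclusion between the two copies of $\overline{H_0^2}$ (before and after multiplication by $\theta$) must be invoked explicitly.
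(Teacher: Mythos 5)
Your proof is correct in all details. Note that the paper itself states Proposition \ref{proj} without any proof, treating these identities as standard facts about model spaces, so there is no in-paper argument to compare against; your route --- deriving (2) from the splitting $\bar\theta f = P\bar\theta f + P^-\bar\theta f$ together with the characterization \eqref{A}, then getting (1) from $P_\theta = P_\theta P$ and the identity $\theta P^-\bar\theta P^- = P^-$ (which, as you say, rests on $\theta H_0^2 \subseteq H_0^2$), and (3) from $P\bar f = \overline{f(0)}$ --- is precisely the standard argument that fills in the omitted details, and every step, including the two membership claims you single out, checks out.
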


Model spaces are also equipped with \textit{conjugations} (antilinear isometric involutions), which are an important tool in the study of model spaces and truncated Toeplitz operators (see for example \cite{GP, GP2, KKMP}).
For a given inner function $\theta$,  the conjugation $C_\theta$ is defined by $C_\theta\colon L^2\to L^2$,  $$C_\theta f(z)=\theta \overline{zf(z)}.$$
It is worth noting that $C_\theta$ preserves  the space $K_\theta^2$ and maps $\theta H^2$ onto $L^2\ominus{H}^2$.

Recall that for $\lambda\in\mathbb{D}$ the {\it kernel function} in $H^2$ denoted by $k_\lambda$ is given by $k_\lambda(z)=\frac{1}{1-\bar\lambda z}$. Similarly, for an inner function $\theta$, in $K^2_\theta$ the kernel function $k_\lambda^\theta$ is given by $k_\lambda^\theta=P_\theta k_\lambda$, i.e., $k_\lambda^\theta=k_\lambda(1-\overline{\theta(\lambda)}\,\theta)$. The set $\{k_\lambda^\theta: \lambda\in\mathbb D\}$ is linearly dense in $K^2_\theta$. Since $k_\lambda^\theta\in K^\infty_\theta$, where $K^\infty_\theta$ denotes the subspace $H^\infty\cap K^2_\theta$, the space $K^\infty_\theta$ is dense in $K^2_\theta$ (see \cite
{Sarason}).

Defining $\tilde{k}_\lambda^\theta=C_\theta k_\lambda^\theta$, we have in particular
\[
k_0^\theta(z)=1-\overline{\theta(0)}\theta(z) \,,\;\;\tilde{k}_0^\theta(z)=\bar z(\theta(z)-\theta(0)).
\]
It is easy to see that, for all $f\in K_\theta^2$,
\begin{equation}\label{k}
\langle f,k_0^\theta\rangle=f(0)\,,\;\;\langle f,\tilde k_0^\theta\rangle=\overline{(C_\theta f)(0)}.
\end{equation}


Now let us consider two nonconstant inner functions $\alpha$ and $\theta$. If $\bar\alpha \theta$ is an inner function, we say that $\alpha$ {\it divides} $\theta$ and we write $\alpha \leqslant \theta$.

\begin{proposition}\label{cos}
Let $\alpha, \theta$ be nonconstant inner functions such that $\alpha \leqslant \theta$. The following holds:
\begin{enumerate}
\item $K^2_\theta=K^2_\alpha\oplus\alpha K^2_{\frac{\theta}{\alpha}}$,
\item $P_\theta=P_\alpha +\alpha P_{\frac{\theta}{\alpha}}\bar\alpha$,
\item  $ k_0^\theta= k_0^\alpha+\overline{\alpha(0)}\alpha k_0^{\frac{\theta}{\alpha}}$,

\item  $\tilde k_0^\theta=\frac{\theta}{\alpha}(0)\tilde k_0^\alpha+\alpha\tilde k_0^{\frac{\theta}{\alpha}}$,
\item $P_\alpha k_0^\theta=k_0^\alpha\,,\,P_\alpha \tilde k_0^\theta=\frac{\theta}{\alpha}(0) \tilde k_0^\alpha$.

\end{enumerate}
\end{proposition}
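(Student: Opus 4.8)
The plan is to prove (1) first, since it is the structural backbone from which the remaining items follow either by projection or by direct substitution. Write $\beta=\bar\alpha\theta=\theta/\alpha$, which is inner precisely because $\alpha\leqslant\theta$, so that $\theta=\alpha\beta$. The starting point is the chain of subspaces $\theta H^2=\alpha\beta H^2\subseteq\alpha H^2\subseteq H^2$, which yields the orthogonal decomposition $H^2\ominus\theta H^2=(H^2\ominus\alpha H^2)\oplus(\alpha H^2\ominus\theta H^2)$. The first summand is by definition $K^2_\alpha$. For the second I would use that the multiplication operator $M_\alpha$ is unitary on $L^2$ (since $|\alpha|=1$ a.e.), hence maps orthogonal complements to orthogonal complements, giving $\alpha H^2\ominus\theta H^2=\alpha H^2\ominus\alpha\beta H^2=\alpha(H^2\ominus\beta H^2)=\alpha K^2_\beta$. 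Combining the two summands proves (1).

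For (2), I would invoke that the orthogonal projection onto a direct sum of mutually orthogonal subspaces is the sum of the projections onto each summand. The projection onto $K^2_\alpha$ is $P_\alpha$. For the projection onto $\alpha K^2_\beta=M_\alpha K^2_\beta$, since $M_\alpha$ is unitary with $M_\alpha^*=M_{\bar\alpha}$, the projection onto the image equals $M_\alpha P_\beta M_\alpha^*=\alpha P_\beta\bar\alpha$. Adding the two yields $P_\theta=P_\alpha+\alpha P_{\theta/\alpha}\bar\alpha$.

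Items (3) and (4) I would establish by direct substitution into the explicit formulas $k_0^\theta=1-\overline{\theta(0)}\theta$ and $\tilde k_0^\theta(z)=\bar z(\theta(z)-\theta(0))$, using $\theta(0)=\alpha(0)\beta(0)$. For (3), expanding $k_0^\alpha+\overline{\alpha(0)}\alpha k_0^\beta=(1-\overline{\alpha(0)}\alpha)+\overline{\alpha(0)}\alpha(1-\overline{\beta(0)}\beta)$ produces a telescoping cancellation leaving $1-\overline{\alpha(0)\beta(0)}\,\alpha\beta=k_0^\theta$; alternatively (3) follows from (2) applied to the constant function $1=k_0$, together with Proposition~\ref{proj}(3), which gives $P_\beta\bar\alpha=\overline{\alpha(0)}k_0^\beta$ and hence $P_\theta 1=k_0^\alpha+\overline{\alpha(0)}\alpha k_0^\beta$. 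For (4), expanding $\beta(0)\tilde k_0^\alpha+\alpha\tilde k_0^\beta=\bar z[\beta(0)(\alpha-\alpha(0))+\alpha(\beta-\beta(0))]$ again telescopes to $\bar z(\alpha\beta-\alpha(0)\beta(0))=\tilde k_0^\theta$.

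Finally, (5) is immediate from (3), (4) and the orthogonality built into (1): applying $P_\alpha$ to the right-hand sides, the terms $k_0^\alpha$ and $\tilde k_0^\alpha$ lie in $K^2_\alpha$ and are therefore fixed, while the terms $\alpha k_0^\beta$ and $\alpha\tilde k_0^\beta$ lie in $\alpha K^2_\beta$, which by (1) is orthogonal to $K^2_\alpha$ and hence annihilated by $P_\alpha$. I expect the only genuinely delicate point to be (1), specifically the verification that $M_\alpha$ carries the complement correctly so that $\alpha H^2\ominus\theta H^2=\alpha K^2_\beta$; once that is secured, (2) is formal and (3)--(5) are routine.
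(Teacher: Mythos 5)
Your proposal is correct. Where the paper actually gives a proof, you do essentially the same thing: the paper establishes (4) by precisely your telescoping computation, writing $\tilde k_0^\theta=\bar z(\theta-\theta(0))=\alpha\bar z\big(\tfrac{\theta}{\alpha}-\tfrac{\theta}{\alpha}(0)\big)+\tfrac{\theta}{\alpha}(0)\bar z(\alpha-\alpha(0))$, and it obtains (5) immediately from (3) and (4), just as you do. The difference lies in (1)--(3): the paper does not prove these at all but cites them to \cite[p.~97]{GMR}, whereas you supply complete arguments. Your route is sound in every step: the subspace chain $\theta H^2\subseteq\alpha H^2\subseteq H^2$ gives $K^2_\theta=K^2_\alpha\oplus(\alpha H^2\ominus\theta H^2)$; unitarity of multiplication by $\alpha$ on $L^2$ correctly identifies $\alpha H^2\ominus\theta H^2$ with $\alpha K^2_{\theta/\alpha}$ (the point you flag as delicate is exactly the right one to check, and it holds because a unitary maps the orthogonal complement of a subspace within a larger subspace onto the orthogonal complement of the images); the projection onto that summand is then $M_\alpha P_{\theta/\alpha}M_\alpha^{*}=\alpha P_{\theta/\alpha}\bar\alpha$, giving (2); and (3) follows either by your telescoping identity or from (2) applied to the constant function $1$ together with Proposition~\ref{proj}(3), which gives $P_{\theta/\alpha}\bar\alpha=\overline{\alpha(0)}k_0^{\theta/\alpha}$. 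What your version buys is self-containedness --- the reader need not consult the reference --- at the cost of length; the paper's version buys brevity by outsourcing the standard structural facts (1)--(3) and proving only the two items, (4) and (5), that it considers new.
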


\begin{proof} (1),\ (2) and (3) were proved in \cite[p. 97]{GMR}. We only need to   prove (4), and (5) follows immediately from (3) and (4). In fact we have
\[
\tilde{k}_0^{\theta}=\bar z(\theta-\theta(0))=\alpha\bar z\tfrac{\theta}{\alpha}-\bar z\alpha(0)\tfrac{\theta}{\alpha}(0)= \alpha\bar z (\tfrac{\theta}{\alpha}-\tfrac{\theta}{\alpha}(0))+\tfrac{\theta}{\alpha}(0)\bar z(\alpha-\alpha(0)),
\]
where $\bar z (\frac{\theta}{\alpha}-\frac{\theta}{\alpha}(0))=\tilde k_0^{\frac{\theta}{\alpha}}\,,\,\bar z(\alpha-\alpha(0))=\tilde k_0^\alpha$.
\end{proof}

The following proposition describes some relations between decompositions and conjugations. Note that, if $\alpha \leqslant \theta$, any $f\in K_\theta^2$ can be uniquely decomposed as $f=f_1+\alpha f_2$ for some $f_1\in K^2_\alpha$ and some $f_2\in  K^2_{\frac{\theta}{\alpha}} $, or as $f=f_2+\frac{\theta}{\alpha} f_1$, for some $f_1\in K^2_\alpha$ and some $f_2\in  K^2_{\frac{\theta}{\alpha}} $. Then the  conjugation $C_{\theta}$ can be seen as $C_{\theta} \colon K^2_{\theta} = K^2_{\alpha}\oplus \alpha K^2_{\frac{\theta}{\alpha}} \to K^2_{\theta} = K^2_{\frac{\theta}{\alpha}}\oplus \frac{\theta}{\alpha} K^2_{\alpha}$, or as  $C_{\theta} \colon K^2_{\theta} = K^2_{\frac{\theta}{\alpha}}\oplus \frac{\theta}{\alpha} K^2_{\alpha} \to K^2_{\theta} = K^2_{\alpha}\oplus \alpha K^2_{\frac{\theta}{\alpha}}$. Now we have:

\begin{proposition}\label{rozklad}
Let $\alpha, \theta$ be nonconstant inner functions such that $\alpha \leqslant \theta$. Then, if $f_1\in K^2_\alpha$ and $f_2\in  K^2_{\frac{\theta}{\alpha}} $,

\begin{enumerate}
\item $C_{\theta}(f_1+\alpha f_2)= C_{\frac{\theta}{\alpha}}f_2+\tfrac{\theta}{\alpha}C_{\alpha}f_1$,
\item $C_{\theta}(f_2+ \tfrac{\theta}{\alpha} f_1)=C_{\alpha}f_1+\alpha C_{\frac{\theta}{\alpha}}f_2$.

\end{enumerate}
\end{proposition}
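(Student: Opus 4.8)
The plan is to prove both identities by direct computation from the definition of the conjugation, $C_\theta f(z)=\theta\overline{zf(z)}$, exploiting the factorization $\theta=\alpha\cdot\frac{\theta}{\alpha}$ together with the fact that $|\alpha|=|\theta|=1$ a.e.\ on $\mathbb T$, so that $\bar\alpha=\alpha^{-1}$ and $\overline{\frac{\theta}{\alpha}}=\frac{\alpha}{\theta}$ there. Since the map $f\mapsto\theta\overline{zf}$ is additive (and conjugate-linear), $C_\theta$ distributes over each of the two summands, so in both cases it suffices to evaluate $C_\theta$ on each term separately and recombine.

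For (1), I would first write $C_\theta(f_1+\alpha f_2)=\theta\overline{zf_1}+\theta\overline{z\alpha f_2}$. In the first term I factor $\theta=\frac{\theta}{\alpha}\,\alpha$ to obtain $\theta\overline{zf_1}=\frac{\theta}{\alpha}\,(\alpha\overline{zf_1})=\frac{\theta}{\alpha}C_\alpha f_1$. In the second term I pull the conjugate inside, $\overline{z\alpha f_2}=\bar\alpha\,\overline{zf_2}$, and use $\theta\bar\alpha=\frac{\theta}{\alpha}$ to get $\theta\overline{z\alpha f_2}=\frac{\theta}{\alpha}\overline{zf_2}=C_{\frac{\theta}{\alpha}}f_2$. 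Adding the two terms yields exactly $C_{\frac{\theta}{\alpha}}f_2+\frac{\theta}{\alpha}C_\alpha f_1$, which is (1); note that $C_\alpha f_1\in K^2_\alpha$ and $C_{\frac{\theta}{\alpha}}f_2\in K^2_{\frac{\theta}{\alpha}}$, so the right-hand side is displayed in the decomposition $K^2_{\theta}=K^2_{\frac{\theta}{\alpha}}\oplus\frac{\theta}{\alpha}K^2_\alpha$, consistent with the remark preceding the statement.

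For (2) the computation is entirely analogous: $C_\theta(f_2+\frac{\theta}{\alpha}f_1)=\theta\overline{zf_2}+\theta\overline{z\frac{\theta}{\alpha}f_1}$. Factoring $\theta=\alpha\frac{\theta}{\alpha}$ in the first term gives $\theta\overline{zf_2}=\alpha C_{\frac{\theta}{\alpha}}f_2$, while in the second term the key identity is $\theta\,\overline{\frac{\theta}{\alpha}}=\theta\cdot\frac{\alpha}{\theta}=\alpha$, whence $\theta\overline{z\frac{\theta}{\alpha}f_1}=\alpha\,\overline{zf_1}=C_\alpha f_1$. Summing gives $C_\alpha f_1+\alpha C_{\frac{\theta}{\alpha}}f_2$, which is (2), now written in the decomposition $K^2_\theta=K^2_\alpha\oplus\alpha K^2_{\frac{\theta}{\alpha}}$.

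There is no real obstacle here beyond bookkeeping with conjugates on the circle; the only point requiring a little care is to keep track of which inner factor is being conjugated, so that the identities $\theta\bar\alpha=\frac{\theta}{\alpha}$ and $\theta\,\overline{\frac{\theta}{\alpha}}=\alpha$ (both valid a.e.\ on $\mathbb T$ because $|\alpha|=|\theta|=1$) are applied correctly. As a sanity check one can observe that (1) and (2) are consistent with $C_\theta$ being an isometric involution that interchanges the two decompositions of $K^2_\theta$ described before the statement, so that each identity is the inverse of the other.
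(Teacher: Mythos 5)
Your proof is correct and follows essentially the same route as the paper's: a direct computation from the definition $C_\theta f=\theta\overline{zf}$, splitting the sum into its two terms and using the a.e.\ identities $\theta\bar\alpha=\tfrac{\theta}{\alpha}$ and $\theta\,\overline{\tfrac{\theta}{\alpha}}=\alpha$ on $\mathbb{T}$. The paper's proof is exactly this computation, written more tersely.
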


\begin{proof}
Let $f_1\in K^2_{\alpha}$, $f_2\in K^2_{\frac{\theta}{\alpha}}$. Then $C_{\theta}(f_1+\alpha f_2)= \theta \bar{z} \bar{f_1}+\theta \bar{z}\bar{\alpha} \bar f_2=\tfrac{\theta}{\alpha}\alpha \bar{z}\bar{f_1}+\tfrac{\theta}{\alpha}\bar{z}\bar{f_2}=
C_{\frac{\theta}{\alpha}}(f_2)+\tfrac{\theta}{\alpha}C_{\alpha}(f_1)$ and $C_{\theta}(f_2+\frac{\theta}{\alpha} f_1)= \theta \bar{z} \bar{f_2}+\theta \bar{z}\frac{\bar{\theta}}{\bar{\alpha}}\bar{f_1}=\alpha \bar{z}\bar{f_1}+\alpha \frac{\theta}{\alpha}\bar{z}\bar f_2=
C_{\alpha}(f_1)+\alpha C_{\frac{\theta}{\alpha}}(f_2)$.
\end{proof}

\begin{corollary}\label{wn2} Let $\alpha, \theta$ be nonconstant inner functions such that $\alpha \leqslant \theta$. We have, for $f\in K^2_\theta$,
\begin{enumerate}
\item $P_\alpha C_\theta f=C_\alpha P_\alpha (\frac{\bar\theta}{\bar\alpha}f)$,
 \item $P_{\frac{\theta}{\alpha}}C_\theta f=C_{\frac{\theta}{\alpha}}P_{\frac{\theta}{\alpha}}\bar\alpha f$.\end{enumerate}
\end{corollary}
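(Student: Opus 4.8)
The plan is to prove item (1) by a direct computation using the explicit formulas for $P_\alpha$ and $C_\theta$, and then to obtain item (2) essentially for free by interchanging the roles of $\alpha$ and $\tfrac{\theta}{\alpha}$.

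First I would rewrite the left-hand side of (1). Since $f\in K^2_\theta$, the vector $C_\theta f=\theta\bar z\bar f$ lies in $K^2_\theta\subseteq H^2$, so $P(C_\theta f)=C_\theta f$. Using the first expression $P_\alpha=\alpha P^-\bar\alpha P$ from Proposition~\ref{proj}(1) (with $\theta$ replaced by $\alpha$) together with $\bar\alpha\theta=\tfrac{\theta}{\alpha}$, this gives
\[
P_\alpha C_\theta f=\alpha P^-\bigl(\bar\alpha\,\theta\bar z\bar f\bigr)=\alpha P^-\bigl(\tfrac{\theta}{\alpha}\,\bar z\bar f\bigr).
\]
Next I would simplify the inner projection on the right-hand side. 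Using instead the second form $P_\alpha=\alpha P^-\bar\alpha-P^-$ from Proposition~\ref{proj}(1), the fact that $\bar\theta f\in\overline{H_0^2}$ for $f\in K^2_\theta$ (so $P^-(\bar\theta f)=\bar\theta f$), and the identity $\alpha\bar\theta=\tfrac{\bar\theta}{\bar\alpha}$, the projection $P_\alpha$ collapses onto the ordinary analytic projection:
\[
P_\alpha\bigl(\tfrac{\bar\theta}{\bar\alpha}f\bigr)=\alpha P^-(\bar\theta f)-P^-\bigl(\tfrac{\bar\theta}{\bar\alpha}f\bigr)=\tfrac{\bar\theta}{\bar\alpha}f-P^-\bigl(\tfrac{\bar\theta}{\bar\alpha}f\bigr)=P\bigl(\tfrac{\bar\theta}{\bar\alpha}f\bigr).
\]
Applying $C_\alpha$ then yields $C_\alpha P_\alpha(\tfrac{\bar\theta}{\bar\alpha}f)=\alpha\bar z\,\overline{P(\tfrac{\bar\theta}{\bar\alpha}f)}$.

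Comparing the two computations, the whole identity reduces to the single claim
\[
\bar z\,\overline{P g}=P^-\bigl(\bar z\bar g\bigr),\qquad g=\tfrac{\bar\theta}{\bar\alpha}f,
\]
noting that $\bar z\overline{g}=\tfrac{\theta}{\alpha}\bar z\bar f$. This is the main (and essentially the only genuine) obstacle: it says that the antilinear involution $Jg=\bar z\bar g$, which interchanges $H^2$ and $\overline{H_0^2}$, intertwines the two projections, i.e. $JP=P^-J$. I would verify this directly on Fourier monomials, where $J$ acts by $z^n\mapsto z^{-n-1}$ and thus visibly carries $H^2$ onto $\overline{H_0^2}$ while conjugating $P$ into $P^-$; equivalently, writing $\bar z\bar g=\overline{zg}$, conjugation sends the frequencies $\geqslant 0$ of $g$ bijectively to the frequencies $\leqslant -1$ of $\bar z\bar g$. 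With this identity in hand, $\bar z\,\overline{P(\tfrac{\bar\theta}{\bar\alpha}f)}=P^-(\tfrac{\theta}{\alpha}\bar z\bar f)$, so both sides of (1) equal $\alpha P^-(\tfrac{\theta}{\alpha}\bar z\bar f)$, which proves (1).

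Finally, item (2) follows immediately by interchanging $\alpha$ and $\tfrac{\theta}{\alpha}$. Indeed $\tfrac{\theta}{\alpha}\leqslant\theta$ with $\theta/\tfrac{\theta}{\alpha}=\alpha$, so applying the already established identity (1) to the divisor $\tfrac{\theta}{\alpha}$ gives $P_{\frac{\theta}{\alpha}}C_\theta f=C_{\frac{\theta}{\alpha}}P_{\frac{\theta}{\alpha}}\bigl(\tfrac{\bar\theta}{\,\overline{\theta/\alpha}\,}f\bigr)$, and since $\tfrac{\bar\theta}{\,\overline{\theta/\alpha}\,}=\bar\alpha$ this is exactly the asserted formula $P_{\frac{\theta}{\alpha}}C_\theta f=C_{\frac{\theta}{\alpha}}P_{\frac{\theta}{\alpha}}\bar\alpha f$.
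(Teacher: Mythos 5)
Your proof is correct, but it follows a genuinely different route from the paper's. The paper obtains this corollary from Proposition \ref{rozklad}: write $f=f_2+\frac{\theta}{\alpha}f_1$ with $f_1\in K^2_\alpha$, $f_2\in K^2_{\frac{\theta}{\alpha}}$; then Proposition \ref{rozklad}(2) gives $C_\theta f=C_\alpha f_1+\alpha C_{\frac{\theta}{\alpha}}f_2$, and since the decomposition $K^2_\theta=K^2_\alpha\oplus\alpha K^2_{\frac{\theta}{\alpha}}$ of Proposition \ref{cos}(1) is orthogonal, $P_\alpha C_\theta f=C_\alpha f_1$; on the other hand $\frac{\bar\theta}{\bar\alpha}f=f_1+\frac{\bar\theta}{\bar\alpha}f_2$ with $\frac{\bar\theta}{\bar\alpha}f_2\in\overline{H_0^2}$, so $P_\alpha(\frac{\bar\theta}{\bar\alpha}f)=f_1$, which yields (1), and (2) is symmetric. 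You instead compute both sides directly from the identities $P_\alpha=\alpha P^-\bar\alpha P=\alpha P^-\bar\alpha-P^-$ of Proposition \ref{proj}, reducing the claim to the intertwining relation $\bar z\,\overline{Pg}=P^-(\bar z\bar g)$, which you verify on Fourier coefficients; every step checks out (in particular $P^-(\bar\theta f)=\bar\theta f$ for $f\in K^2_\theta$, and $\alpha\bar\theta=\frac{\bar\theta}{\bar\alpha}$ a.e.\ on $\mathbb{T}$). Your version is more elementary and self-contained---it bypasses Proposition \ref{rozklad} entirely---whereas the paper's is shorter given the structure already in place and more transparent about what the identity says: $P_\alpha C_\theta f$ is precisely the conjugate of the $\frac{\theta}{\alpha}K^2_\alpha$-coordinate of $f$. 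One small point about your derivation of (2): it applies (1) with $\frac{\theta}{\alpha}$ in the role of $\alpha$, and $\frac{\theta}{\alpha}$ may be constant (when $\alpha$ equals $\theta$ up to a unimodular constant), which falls outside the corollary's stated ``nonconstant'' hypothesis; this is harmless, since your argument for (1) never used nonconstancy (both sides vanish when $P_{\frac{\theta}{\alpha}}=0$), but it is worth a sentence of justification.
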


Let $S$ be the unilateral shift on the Hardy space $H^2$ and, for a nonconstant inner function $\theta$, let $S_\theta=P_\theta S_{|K^2_\theta}$  be the compression of  $S$ to $K^2_\theta$. The space $K^2_\theta$ is invariant for $S^*$, thus  $(S_\theta)^*=  {S^*}_{|K^2_\theta}$. Note that, for any $f\in K^2_\theta$,

\begin{equation}\label{B}
S_\theta f=zf-\overline{(C_\theta f)(0)}\,\theta=Sf-\langle f,\tilde k_0^\theta\rangle\theta,
\end{equation}
\begin{equation}\label{C}
S_\theta^* f=\bar z(f-f(0)).
\end{equation}
In particular,
\begin{equation}\label{D}
S_\theta^* k_0^\theta=-\overline{\theta(0)} \,\tilde k_0^\theta\,,\;S_\theta\tilde k_0^\theta=-\theta(0)k_0^\theta.
\end{equation}

The function $k_0^\theta$  is a cyclic vector for $S_\theta$ and $\tilde{k}_0^\theta$ is a cyclic vector for $S_\theta^*$ (see \cite[Lemma 2.3]{Sarason}).
 In what follows we will use the {\it defect operators} $I_{K^2_\theta}-S_\theta S^*_\theta=k_0^\theta\otimes k_0^\theta$ and $I_{K^2_\theta}-S^*_\theta S_\theta=\tilde k_0^\theta\otimes \tilde k_0^\theta$, using the notation $(x\otimes y)z=\langle z,y\rangle x$ for any $x,y,z$ in a Hilbert space $H$  (\cite[Lemma 2.4]{Sarason}). We list below some simple properties that will be used later.

\begin{proposition}\label{k-inf}
Let $\alpha$ and $\theta$ be nonconstant inner functions such that $\alpha\leqslant\theta$. Then
\begin{enumerate}
 \item  $P_\alpha S_\theta=S_\alpha P_\alpha$ on $K^2_\theta$,
\item $ S^*_\theta P_\alpha=S^*_\alpha $ on $K^2_\alpha$,
  \item $S^n_\theta k_0^\theta\in K^\infty_\theta$ for $n\geq 0$,
  \item $(S^n_\theta)^*\tilde k_0^\theta\in K^\infty_\theta$ for $n\geq 0$.
\end{enumerate}
\end{proposition}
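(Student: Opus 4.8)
The plan is to prove each of the four statements by reducing to the known action formulas \eqref{B}, \eqref{C} for $S_\theta$, $S_\theta^*$, together with the decomposition results in Proposition~\ref{cos} and the conjugation identities in Proposition~\ref{rozklad} and Corollary~\ref{wn2}. I would take $f \in K^2_\theta$ and write it in the decomposition $f = f_1 + \alpha f_2$ with $f_1 \in K^2_\alpha$ and $f_2 \in K^2_{\theta/\alpha}$ guaranteed by Proposition~\ref{cos}(1), since having both $P_\alpha$ and $P_{\theta/\alpha}$ available in closed form is exactly what makes the intertwining relations tractable.

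For (1), the natural route is to compute $S_\theta f$ using \eqref{B}, namely $S_\theta f = z f - \langle f, \tilde k_0^\theta\rangle\,\theta$, and then apply $P_\alpha$. Since $\theta = \alpha\cdot\frac{\theta}{\alpha} \in \theta H^2 \subseteq \alpha H^2$ and $P_\alpha$ annihilates $\alpha H^2$, the correction term drops out, leaving $P_\alpha S_\theta f = P_\alpha(zf)$. On the other side, $S_\alpha P_\alpha f = P_\alpha(z P_\alpha f) = z P_\alpha f - \langle P_\alpha f, \tilde k_0^\alpha\rangle\,\alpha$ by \eqref{B} again. So the content of (1) is the identity $P_\alpha(zf) = P_\alpha(z P_\alpha f)$, which should follow because $f - P_\alpha f \in \alpha K^2_{\theta/\alpha} \subseteq \alpha H^2$, hence $z(f - P_\alpha f) \in \alpha H^2$, which is killed by $P_\alpha$; one must be slightly careful that $z \cdot \alpha H^2 \subseteq \alpha H^2$, which is immediate. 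For (2), I would take $f \in K^2_\alpha$ and simply compute both sides via \eqref{C}: since $K^2_\alpha \subseteq K^2_\theta$ and $P_\alpha$ acts as the identity on $K^2_\alpha$, we get $S^*_\theta P_\alpha f = S^*_\theta f = \bar z(f - f(0)) = S^*_\alpha f$ directly, the point being that the formula \eqref{C} for $S^*_\theta$ does not depend on $\theta$ at all. This makes (2) essentially immediate.

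For (3) and (4), the goal is the regularity statement that the iterates land in $K^\infty_\theta = H^\infty \cap K^2_\theta$. The cleanest approach is induction on $n$: the base case $n=0$ is covered by the already-noted fact that $k_0^\theta, \tilde k_0^\theta \in K^\infty_\theta$, and for the inductive step I would show that $S_\theta$ and $S_\theta^*$ preserve $K^\infty_\theta$. By \eqref{B}, $S_\theta g = zg - \langle g,\tilde k_0^\theta\rangle\,\theta$; if $g \in K^\infty_\theta$ then $zg$ is bounded and $\theta$ is bounded, so $S_\theta g \in H^\infty$, and since $S_\theta$ maps $K^2_\theta$ into itself we conclude $S_\theta g \in K^\infty_\theta$. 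Similarly \eqref{C} gives $S_\theta^* g = \bar z(g - g(0))$, which is bounded whenever $g$ is, and again stays in $K^2_\theta$. Iterating, $S_\theta^n k_0^\theta \in K^\infty_\theta$ and $(S_\theta^n)^* \tilde k_0^\theta \in K^\infty_\theta$ for all $n \geq 0$.

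I do not anticipate a serious obstacle here; all four parts follow from unwinding the definitions against the explicit formulas \eqref{B}, \eqref{C} and the decomposition of Proposition~\ref{cos}. The one place demanding genuine care is part (1): one must verify precisely that the two correction terms (the $\theta$-term for $S_\theta$ and the $\alpha$-term for $S_\alpha$) are handled consistently, i.e.\ that applying $P_\alpha$ to $zf$ reproduces exactly the compressed shift $S_\alpha$ acting on $P_\alpha f$ rather than merely $z P_\alpha f$. The key observation that resolves this is that the nontrivial part of $f$ beyond $P_\alpha f$ lies in $\alpha K^2_{\theta/\alpha} \subseteq \alpha H^2$, a subspace that is invariant under multiplication by $z$ and annihilated by $P_\alpha$, so that both the $\theta$-correction and the discrepancy between $zf$ and $zP_\alpha f$ are simultaneously absorbed.
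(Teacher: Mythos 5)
Your proof is correct, and for parts (3) and (4) it takes a genuinely different route from the paper's. The paper dismisses (1) and (2) as immediate (your careful verification via \eqref{B}, \eqref{C} and the decomposition $K^2_\theta=K^2_\alpha\oplus\alpha K^2_{\frac{\theta}{\alpha}}$ is exactly the content behind that remark, and is sound: the $\theta$-correction dies under $P_\alpha$ because $\theta\in\alpha H^2$, and $z(f-P_\alpha f)\in\alpha H^2$ absorbs the discrepancy). For (3) and (4), however, the paper computes the iterates explicitly: invoking the identity $S_\theta^n f=P_\theta S^n f$ for $f\in K^2_\theta$ (Theorem 9.2.2 of \cite{GMR}), it obtains $S_\theta^n k_0^\theta=z^n-\theta P(\bar\theta z^n)$ and $(S^*_\theta)^n\tilde k_0^\theta=\theta\bar z^{n+1}-P^-(\theta\bar z^{n+1})$, and concludes boundedness because $P(\bar\theta z^n)$ is a polynomial in $z$ and $P^-(\theta\bar z^{n+1})$ a polynomial in $\bar z$. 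Your induction instead establishes the stronger structural fact that $K^\infty_\theta$ is invariant under both $S_\theta$ and $S^*_\theta$: by \eqref{B}, $S_\theta g=zg-\langle g,\tilde k_0^\theta\rangle\theta$ is bounded whenever $g$ is, and by \eqref{C}, $S^*_\theta g=\bar z(g-g(0))$ is bounded whenever $g$ is, while both operators preserve $K^2_\theta$; the base cases $k_0^\theta,\tilde k_0^\theta\in K^\infty_\theta$ are known. Your argument is more self-contained (it avoids the external identity $S_\theta^n=P_\theta S^n|_{K^2_\theta}$) and applies to the orbit of any bounded vector, not just the kernel functions; what the paper's computation buys is explicit closed-form expressions for the iterates, though in the sequel only membership in $K^\infty_\theta$ is ever used, so nothing is lost by your approach.
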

\begin{proof} The proof of (1) and (2) is immediate.
By \cite[Theorem 9.2.2]{GMR} we know that $S_\theta^n f=P_\theta S^n f$ for any $f\in K_\theta^2$, $n\geq 0$. Hence
\[S_\theta^n k_0^\theta=P_\theta S^n k_0^\theta=P_\theta(z^n(1-\overline{\theta(0)}\theta))=P_\theta z^n=z^n-\theta P (\bar\theta z^n).\] Since $P (\bar\theta z^n)$ is a polynomial, we get $S^n_\theta k_0^\theta\in K^\infty_\theta$, which proves (3).

On the other hand,
$$(S^*_\theta)^n\tilde k_0^\theta=P_\theta\bar z^{n+1}(\theta-\theta(0))=\theta\bar z^{n+1}-P^-\theta\bar z^{n+1};
$$
since $P^- (\theta \bar z^n)$ is a polynomial in $\bar z$, we get (4).
\end{proof}


\section{Asymmetric truncated Toeplitz operators}

Let $\alpha$, $ \theta$ be nonconstant inner functions. For $\varphi\in L^2$ we define an operator $A_\varphi^{\theta,\alpha}\colon \mathcal{D}\subset K^2_\theta\to K^2_\alpha$,
    as
   $A_\varphi^{\theta,\alpha} f=P_{\alpha}(\varphi f)$ having domain %
   $\mathcal{D}=\mathcal{D}( A_\varphi^{\theta,\alpha})=\{ f\in K^2_\theta\colon \varphi f\in L^2\}$.
 The operator $A_\varphi^{\theta,\alpha}$ is closed and densely defined in $K^2_\theta$. Note that $K^\infty_\theta\subset \mathcal{D}(A_\varphi^{\theta,\alpha})$.
   The  operator   $A_\varphi^{\theta,\alpha}$ will be called {\it an asymmetric truncated Toeplitz operator}.
      If this operator is bounded, then it admits a unique bounded extension to $K^2_\theta$, $A_{\varphi}^{\theta,\alpha}\colon K^2_\theta\to K^2_\alpha$. By $\tcalta$ we denote the space of all bounded asymmetric
 truncated Toeplitz operators. 
    For $\alpha=\theta$ we will write $A_\varphi^{\theta}$ instead of $A_\varphi^{\theta,\theta}$ (such operators are called {\it truncated Toeplitz operators} and were studied by Sarason in \cite{Sarason}) and $\mathcal{T}(\theta)$ instead of $\mathcal{T}(\theta,\theta)$.

It is easy to see that the following holds.

\begin{proposition}\label{wl}Let $\alpha, \theta$ be nonconstant inner functions such that $\alpha\leqslant \theta$.  Then
\[
 \ata_{k_0^\alpha}=P_\alpha=\ata_{\overline{k_0^\theta}}.
 \]
If $\varphi\in L^2$, we have for all $f\in  K^\infty_\alpha$
 \[
A_\varphi^{\theta,\alpha}f=A_\varphi^{\alpha}f.
\]
\end{proposition}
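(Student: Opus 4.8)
The plan is to verify each identity directly from the definition $\ata_\varphi f = P_\alpha(\varphi f)$ together with the projection formulas in Proposition~\ref{proj} and the divisibility decomposition in Proposition~\ref{cos}.

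First I would establish $\ata_{k_0^\alpha} = P_\alpha$. Since $k_0^\alpha = 1 - \overline{\alpha(0)}\alpha$ is a bounded function, $\ata_{k_0^\alpha} f = P_\alpha(k_0^\alpha f) = P_\alpha f - \overline{\alpha(0)} P_\alpha(\alpha f)$ for all $f \in K^\infty_\theta$. The key observation is that $\alpha f \in \alpha H^2$, and $P_\alpha$ annihilates $\alpha H^2$: indeed $K^2_\alpha = H^2 \ominus \alpha H^2$, so $P_\alpha(\alpha f) = 0$ whenever $\alpha f \in \alpha H^2 \subset H^2$, which holds since $f \in H^2$. Hence $\ata_{k_0^\alpha} f = P_\alpha f$, and as $P_\alpha f = P_\alpha P_\theta f = P_\alpha f$ agrees with the action of $P_\alpha$ restricted to $K^2_\theta$, the first equality follows after extending by density from $K^\infty_\theta$.

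Next I would prove $\ata_{\overline{k_0^\theta}} = P_\alpha$. Here $\overline{k_0^\theta} = 1 - \theta(0)\bar\theta$, so for $f \in K^\infty_\theta$ we have $\ata_{\overline{k_0^\theta}} f = P_\alpha f - \theta(0) P_\alpha(\bar\theta f)$. The main point is to show $P_\alpha(\bar\theta f) = 0$. Since $f \in K^2_\theta$, property~\eqref{A} gives $\bar\theta f \in \overline{H_0^2} = L^2 \ominus H^2$; because $K^2_\alpha \subset H^2$, the projection $P_\alpha$ of any element of $\overline{H_0^2}$ onto $K^2_\alpha$ vanishes (one can write $P_\alpha = P_\theta P_\alpha$-style through $P$, or directly note $P_\alpha g = P_\alpha P g$ and $P g = 0$ for $g \in \overline{H_0^2}$). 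Thus $P_\alpha(\bar\theta f) = 0$ and the second equality follows, again extending by density.

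Finally, for the restriction identity $A_\varphi^{\theta,\alpha} f = A_\varphi^{\alpha} f$ when $f \in K^\infty_\alpha$, I would simply observe that both operators apply $P_\alpha$ to $\varphi f$: the only difference between $A_\varphi^{\theta,\alpha}$ and $A_\varphi^{\alpha}$ lies in the domain $K^2_\theta$ versus $K^2_\alpha$, but since $\alpha \leqslant \theta$ implies $K^2_\alpha \subset K^2_\theta$, any $f \in K^\infty_\alpha$ lies in the domain of both, and $P_\alpha(\varphi f)$ is computed identically. The main obstacle I anticipate is being careful about the density arguments and the domain issues: the identities are first verified on the dense subset $K^\infty_\alpha$ (respectively $K^\infty_\theta$) where $\varphi f \in L^2$ is guaranteed, and then one must invoke boundedness (where applicable) to extend; for $\ata_{k_0^\alpha}$ and $\ata_{\overline{k_0^\theta}}$ the symbols are bounded so boundedness is automatic, making the extension routine.
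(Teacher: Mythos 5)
Your proof is correct. The paper itself offers no argument for Proposition \ref{wl} (it is prefaced only by ``It is easy to see that the following holds''), and your direct verification --- expanding $P_\alpha(k_0^\alpha f)$ and $P_\alpha(\overline{k_0^\theta} f)$ and killing the terms $P_\alpha(\alpha f)$ and $P_\alpha(\bar\theta f)$ via $\alpha H^2 \perp K^2_\alpha$ and $\bar\theta f \in \overline{H_0^2} \perp H^2$, then noting both operators act as $P_\alpha(\varphi\,\cdot)$ on $K^\infty_\alpha \subset K^\infty_\theta$ --- is precisely the routine computation the authors intend.
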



\begin{proposition}\label{l3}
Let $\alpha$, $ \theta$ be any inner functions and $\varphi\in L^2$.
Then
\begin{equation}\label{e-adj}\langle A_{\varphi}^{\theta,\alpha} f, g\rangle=
\langle f,A_{\bar\varphi}^{\alpha,\theta} g \rangle
\quad\text{for all}\quad f\in \mathcal{D}(A_{\varphi}^{\theta,\alpha}),\ g\in \mathcal{D}(A_{\bar\varphi}^{\alpha,\theta}).
\end{equation}
Moreover,
$\mathcal{D}(A_{\bar\varphi}^{\alpha,\theta})=\mathcal{D}(\big(A_{\varphi}^{\theta,\alpha}\big)^*)$ and
 $\big(A_{\varphi}^{\theta, \alpha}\big)^*=A_{\bar\varphi}^{ \alpha, \theta}$.
 \end{proposition}

\begin{proof}
A straightforward calculation shows that
\begin{multline*}
\langle A_{\varphi}^{\theta,\alpha} f, g\rangle=\langle  P_\alpha (\varphi  f),g\rangle=
\langle \varphi f, g\rangle=\int f{\varphi} \bar g dm \\=
 \langle f, \bar\varphi g\rangle=\langle f, P_\theta (\bar\varphi  g)\rangle=
\langle f, A_{\bar\varphi}^{\alpha,\theta}g \rangle.
\end{multline*}Note also that $\mathcal{D}(A_{\bar\varphi}^{\alpha,\theta})=
\{ g\in K^2_\alpha\colon \bar\varphi g\in L^2\}$. So, if $g\in \mathcal{D}(A_{\bar\varphi}^{\alpha,\theta})$, there is $h\in K^2_\theta$ such that $$\langle A_{\varphi}^{\theta,\alpha} f, g\rangle=\langle f,h\rangle,$$  for all $f\in \mathcal{D}(A_{\varphi}^{\theta,\alpha})$, i.e., such that $\varphi f \in L^2$, and therefore $g\in \mathcal{D}((A_{\varphi}^{\theta,\alpha})^*)$. In fact, taking $h=P_\theta(\bar\varphi f)$, we have $\langle A_{\varphi}^{\theta,\alpha} f, g\rangle=\langle P_\alpha(\varphi f),g\rangle=\langle \varphi f,g\rangle=\langle f,\bar\varphi g\rangle=\langle f, P_\theta(\bar\varphi g)\rangle=\langle f,h\rangle$. The converse is similarly true.
\end{proof}

\begin{proposition}\label{p1}
Let $\alpha, \theta$ be nonconstant inner functions such that $\alpha\leqslant \theta$.
  Let $A_\psi^{\theta,\alpha}$ be an asymmetric truncated Toeplitz operator with $\psi\in H^2$. Then $$S_\alpha A_\psi^{\theta,\alpha}f =A_\psi^{\theta,\alpha}  S_\theta f \quad\text{ for all}\ \  f\in K^\infty_\theta.$$
\end{proposition}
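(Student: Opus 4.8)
The plan is to verify the intertwining relation $S_\alpha A_\psi^{\theta,\alpha} f = A_\psi^{\theta,\alpha} S_\theta f$ by a direct computation on $f \in K^\infty_\theta$, reducing everything to multiplication operators and projections and then tracking the correction terms that arise from the truncations. Since $\psi \in H^2$ and $f \in K^\infty_\theta \subset H^\infty$, the product $\psi f$ lies in $H^2$, so $A_\psi^{\theta,\alpha} f = P_\alpha(\psi f)$ is well defined and the various applications of $S$, $S_\theta$, $S_\alpha$ below all make sense.

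First I would expand the right-hand side. Using \eqref{B} for $S_\theta$ on $K^2_\theta$, we have $S_\theta f = zf - \langle f, \tilde k_0^\theta\rangle\,\theta$, so
\[
A_\psi^{\theta,\alpha} S_\theta f = P_\alpha\bigl(\psi z f\bigr) - \langle f,\tilde k_0^\theta\rangle\, P_\alpha(\psi\theta).
\]
The second term should vanish: since $\alpha \leqslant \theta$, we have $\theta = \alpha\,\tfrac{\theta}{\alpha}$, so $\psi\theta = \alpha\,(\psi\tfrac{\theta}{\alpha}) \in \alpha H^2$, and $P_\alpha$ annihilates $\alpha H^2$ (indeed $\alpha H^2 \perp K^2_\alpha$). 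Hence
\[
A_\psi^{\theta,\alpha} S_\theta f = P_\alpha(z\,\psi f).
\]

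Next I would expand the left-hand side. Writing $g = A_\psi^{\theta,\alpha} f = P_\alpha(\psi f) \in K^2_\alpha$, formula \eqref{B} applied in $K^2_\alpha$ gives
\[
S_\alpha g = z g - \langle g,\tilde k_0^\alpha\rangle\,\alpha = z\,P_\alpha(\psi f) - \langle g,\tilde k_0^\alpha\rangle\,\alpha.
\]
The cleanest way to compare this with $P_\alpha(z\psi f)$ is to use the projection identity, so I would rewrite $P_\alpha(z\psi f)$ and $z P_\alpha(\psi f)$ in terms of $P^-$ and $P$ via Proposition~\ref{proj}(1), or equivalently compute the difference $z P_\alpha(\psi f) - P_\alpha(z\psi f)$ directly. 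Writing $\psi f = P_\alpha(\psi f) + (\text{piece in } \alpha H^2) + (\text{piece in }\overline{H^2_0})$, multiplication by $z$ preserves $H^2$ but shifts these pieces, and the only obstruction to commuting $z$ past $P_\alpha$ is the component that leaves $K^2_\alpha$ under multiplication by $z$, namely the top coefficient along $\alpha$. This is precisely the rank-one defect encoded in the $\tilde k_0^\alpha$ term, and matching it is the crux of the argument.

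The main obstacle I expect is the careful bookkeeping of this single correction term: one must show that the coefficient $\langle g,\tilde k_0^\alpha\rangle$ produced by $S_\alpha$ is exactly cancelled by the discrepancy between $z P_\alpha(\psi f)$ and $P_\alpha(z\psi f)$. I would handle this by applying \eqref{k}, namely $\langle g,\tilde k_0^\alpha\rangle = \overline{(C_\alpha g)(0)}$, together with Proposition~\ref{proj}(2) to express $P_\alpha$ explicitly as $f \mapsto f - \alpha P(\bar\alpha f)$, and then compare the two analytic functions $z P_\alpha(\psi f)$ and $P_\alpha(z \psi f)$ coefficient-wise, checking that their difference is exactly $-\langle g,\tilde k_0^\alpha\rangle\,\alpha$. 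Once this identity is established, both sides reduce to $P_\alpha(z\psi f)$ and the proposition follows.
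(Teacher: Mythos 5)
Your proposal is correct and follows essentially the same route as the paper: both sides are reduced to $P_\alpha(z\psi f)$, the $\theta$-term on the right dying because $\psi\theta=\alpha(\psi\tfrac{\theta}{\alpha})\in\alpha H^2$. The only (minor) difference is on the left-hand side, where the paper writes $S_\alpha P_\alpha(\psi f)=P_\alpha\bigl(z(\psi f-\alpha P(\bar\alpha\psi f))\bigr)=P_\alpha(z\psi f)$ so that no defect term ever appears, whereas you invoke \eqref{B} and must then verify that the correction $\langle P_\alpha(\psi f),\tilde k_0^\alpha\rangle\,\alpha$ exactly equals the discrepancy $zP_\alpha(\psi f)-P_\alpha(z\psi f)$ --- a true identity that indeed follows from Proposition~\ref{proj}(2) and \eqref{k} as you indicate, so your extra bookkeeping closes without difficulty.
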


\begin{proof}
Let $f\in K_\theta^\infty$. Then by Proposition \ref{proj}
\[
S_\alpha A_\psi^{\theta,\alpha} f=S_\alpha P_\alpha(\psi f)= P_\alpha z(\psi f-\alpha P(\bar{\alpha}\psi f))
=P_\alpha(z\psi f),
\]
since $P_\alpha (\alpha z P (\bar{\alpha}\psi f))=0$. On the other hand, taking \eqref {B} into account, for all $f\in K_\theta^\infty$ we have
\[
A_\psi^{\theta,\alpha} S_\theta f=P_\alpha\psi  S_\theta f= P_\alpha\psi (z f- \langle f,\tilde k_0^\theta\rangle\theta)= P_\alpha(z\psi f),
\]
since $P_\alpha (\psi \theta )=0$.
\end{proof}

\begin{remark}
 Theorem 3.1.16 \cite{Bercovici} implies that for nonconstant inner functions $\alpha, \theta$ such that $\alpha\leqslant \theta$, if a bounded operator $A\colon K^2_\theta\to K^2_\alpha$ intertwines $S_\alpha$, $S_\theta$, i.e., $S_\alpha A =A S_\theta$, then $A=A_\psi^{\theta,\alpha}$ for some $\psi\in H^\infty$.
\end{remark}
\begin{example}
One can ask, whether a similar result can be obtained for $A_\psi^{\alpha, \theta}$ with $\alpha\leqslant \theta$ and $\psi\in H^2$, but the answer is negative. For example, let $\alpha=z^2$, $\theta=z^n$, $n>5$, $\psi=z^3$ and $f=z$. Then $S_\theta A_\psi^{\alpha,\theta}f=z^5$ but $A_\psi^{\alpha,\theta}S_\alpha f=0$.
\end{example}

 In the next proposition we describe the action of some (not necessarily bounded) asymmetric truncated Toeplitz operators on some particular functions. These properties will be used later on.

\begin{proposition}\label{kzero}
Let $\alpha, \theta$ be nonconstant inner functions such that $\alpha \leqslant \theta$, and let $\psi\in K^2_{\alpha}, \chi\in K^2_{\theta}$, with $\chi=\chi_{\frac{\theta}{\alpha}}+\frac{\theta}{\alpha}\chi_\alpha$ where $\chi_{\frac{\theta}{\alpha}}\in K^2_{\frac{\theta}{\alpha}}$ and $\chi_\alpha\in K^2_\alpha$.
 Then
\begin{enumerate}
\item $\ata_\psi k_0^\theta=\psi$,
\item $\ata_{\bar\chi} k_0^\theta=\overline{\chi(0)}k_0^\alpha-
    \overline{\theta(0)}\alpha(\bar\chi_\alpha-\overline{\chi_\alpha(0)}),$
  \item $A^{\theta,\alpha}_{\psi}\tilde{k}_0^{\theta}=\frac{\theta}{\alpha}(0)\psi(0) \tilde{k}_0^{\alpha} - \theta(0)S^*_{\alpha}\psi
=\bar z(\frac{\theta}{\alpha}(0)\psi(0)\alpha-\theta(0)\psi)$,
      \item $A^{\theta,\alpha}_{\bar{\chi}}\tilde{k}_0^{\theta}=P_{\alpha}C_{\theta}\chi=C_\alpha P_\alpha(\frac{\bar\theta}{\bar\alpha}\chi)=C_\alpha \chi_\alpha,$
  \item  $A^{\alpha,\theta}_{\bar{\psi}}k_0^{\alpha}= \overline{\psi(0)}k_0^\theta-\overline{\alpha(0)}(\alpha\bar\psi-\overline{\frac{\theta}{\alpha}(0)\psi(0)}\,\theta)$,
  \item $A^{\alpha,\theta}_\chi k_0^\alpha=\chi-\overline{\alpha(0)}\alpha\chi_{\frac{\theta}{\alpha}}$,
  \item $A^{\alpha,\theta}_{\bar{\psi}}\tilde{k}_0^{\alpha}=C_{\alpha}\psi$,
  \item $A^{\alpha,\theta}_\chi \tilde k_0^\alpha=P_\theta C_\theta(\bar\chi\frac{\theta}{\alpha})-\alpha(0)S^*_\theta\chi.$
  \end{enumerate}
\end{proposition}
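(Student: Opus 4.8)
The plan is to prove all eight identities by one uniform, computational method, exploiting that $\ata_\varphi f=P_\alpha(\varphi f)$ and $\aat_\varphi f=P_\theta(\varphi f)$. For each item I would insert the explicit expressions $k_0^\gamma=1-\overline{\gamma(0)}\gamma$ and $\tilde k_0^\gamma=\bar z\gamma-\gamma(0)\bar z$ (with $\gamma\in\{\alpha,\theta\}$) and expand by linearity, so that the whole statement reduces to evaluating a short list of projections, namely $P_\beta\varphi$, $P_\beta(\gamma\varphi)$, $P_\beta(\bar z\varphi)$ and $P_\beta(\bar z\gamma\varphi)$, for the relevant target $\beta\in\{\alpha,\theta\}$ and symbol $\varphi\in\{\psi,\bar\chi,\bar\psi,\chi\}$. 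Thus (1),(2),(5),(6) come from the two ``untwisted'' projections and (3),(4),(7),(8) from the two $\bar z$-twisted ones.

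The computation rests on a handful of reusable facts that I would record first. From Proposition \ref{proj}(3), $P_\beta\bar f=\overline{f(0)}k_0^\beta$ for $f\in H^2$, and in particular $P_\beta\bar z=0$. Consequently, if $f\in H^2$ then $\bar z\bar f=\overline{zf}\in\overline{H_0^2}$ is annihilated by the inner $P$ in $P_\beta=\beta P^-\bar\beta P$, giving $P_\beta(\bar z\bar f)=0$; and for $g\in K^2_\beta$ the splitting $\bar z g=S_\beta^*g+g(0)\bar z$ (using \eqref{C}) yields $P_\beta(\bar z g)=S_\beta^*g$. For the conjugation terms I would use $C_\theta f=\theta\bar z\bar f$ together with Corollary \ref{wn2}(1), $P_\alpha C_\theta f=C_\alpha P_\alpha(\tfrac{\bar\theta}{\bar\alpha}f)$, and the fact that for $\psi\in K^2_\alpha$ one has $\alpha\bar\psi=zC_\alpha\psi\in H^2$ and $C_\theta C_\alpha\psi=\tfrac{\theta}{\alpha}\psi$ (the latter from Proposition \ref{rozklad}(1) with $f_2=0$).

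With these in hand the routine cases fall out quickly. For (1),(2),(6) every function involved is already in $H^2$, and I would simply project using $P_\beta=\beta P^-\bar\beta$ on $H^2$ and the orthogonal splitting $K^2_\theta=K^2_\alpha\oplus\alpha K^2_{\frac\theta\alpha}$ of Proposition \ref{cos}(1) to discard the summands lying in $\alpha H^2$ or $\theta H^2$; the term $\tfrac{\theta}{\alpha}\overline{\chi_{\theta/\alpha}}\in H_0^2$ (from the remark after \eqref{A}) is what makes $P_\alpha(\theta\bar\chi_{\theta/\alpha})=0$ in (2). Statement (5) needs one extra step: after writing $\alpha\bar\psi=zC_\alpha\psi$, I would apply \eqref{B} to get $P_\theta(zC_\alpha\psi)=S_\theta C_\alpha\psi=\alpha\bar\psi-\langle C_\alpha\psi,\tilde k_0^\theta\rangle\theta$ and evaluate $\langle C_\alpha\psi,\tilde k_0^\theta\rangle=\overline{(C_\theta C_\alpha\psi)(0)}=\overline{\tfrac{\theta}{\alpha}(0)\psi(0)}$ via \eqref{k}. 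The ``tilde'' cases (4),(7),(8) with conjugate symbols are handled by recognizing $\bar\chi\tilde k_0^\theta=C_\theta\chi-\theta(0)\bar z\bar\chi$ (and analogously for (7),(8), where $C_\theta(\bar\chi\tfrac{\theta}{\alpha})=\alpha\bar z\chi$): the second summand is killed by $P_\beta(\bar z\,\overline{(\cdot)})=0$, and the first is transformed by Corollary \ref{wn2}(1), reducing $P_\alpha(\tfrac{\bar\theta}{\bar\alpha}\chi)$ to $\chi_\alpha$ since $\overline{\tfrac\theta\alpha}\chi_{\theta/\alpha}\in\overline{H_0^2}$.

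I expect the real obstacle to be (3), together with the first summand of (8), because there the argument $\bar z\theta\psi$ (respectively $\bar z\psi$) is \emph{not} in $H^2$ whenever the relevant constant term is nonzero. The safe procedure is to first peel off the anti-analytic piece, writing $\bar z(\theta\psi)=S^*(\theta\psi)+\theta(0)\psi(0)\bar z$, discard $\theta(0)\psi(0)\bar z$ using $P_\alpha\bar z=0$, and only then apply $P_\alpha=\alpha P^-\bar\alpha$ to the genuine $H^2$-function $S^*(\theta\psi)$; dropping the inner $P$ in $P_\alpha=\alpha P^-\bar\alpha P$ produces a spurious term $\theta(0)\psi(0)\bar z$ and the wrong answer. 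Carrying this out gives $P_\alpha(\bar z\theta\psi)=\tfrac{\theta}{\alpha}(0)\psi(0)\bar z\alpha-\theta(0)\psi(0)\bar z$ and $P_\alpha(\bar z\psi)=S_\alpha^*\psi$, and the identity $\tfrac{\theta}{\alpha}(0)\alpha(0)=\theta(0)$ is precisely what reconciles the two stated forms $\tfrac{\theta}{\alpha}(0)\psi(0)\tilde k_0^\alpha-\theta(0)S_\alpha^*\psi$ and $\bar z\big(\tfrac{\theta}{\alpha}(0)\psi(0)\alpha-\theta(0)\psi\big)$.
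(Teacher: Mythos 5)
Your proposal is correct and follows essentially the same route as the paper's own proof: a direct evaluation of the relevant projections using Proposition \ref{proj}, the splitting $K^2_\theta=K^2_\alpha\oplus\alpha K^2_{\frac{\theta}{\alpha}}$, Corollary \ref{wn2}, and — for the delicate terms — the full formula $P_\alpha=\alpha P^-\bar\alpha-P^-$ rather than its $H^2$-restriction, which is exactly the care the paper takes when it computes $P_\alpha(\psi\bar z\alpha)=\psi(0)\tilde k_0^\alpha$ in item (3). Your micro-variations are sound (in (3) you expand $\tilde k_0^\theta=\bar z(\theta-\theta(0))$ directly where the paper first uses $P_\alpha\tilde k_0^\theta=\tfrac{\theta}{\alpha}(0)\tilde k_0^\alpha$ from Proposition \ref{cos}, and in (5) you route $P_\theta(\alpha\bar\psi)$ through $S_\theta C_\alpha\psi$, \eqref{B} and \eqref{k} where the paper uses Proposition \ref{proj}(2)); the only slip to fix is the clause lumping (8) with the cases whose second summand is ``killed'': in (8) that summand is $\alpha(0)\bar z\chi$ with $\chi$ analytic, so it is not annihilated but instead gives $-\alpha(0)P_\theta(\bar z\chi)=-\alpha(0)S^*_\theta\chi$ by your own rule $P_\beta(\bar z g)=S^*_\beta g$, exactly as the stated formula requires.
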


\begin{proof}
 (1) follows from
$$\ata_\psi k_0^\theta=P_\alpha(\psi-\overline{\theta(0)}\theta\psi)=\psi.$$
To prove (2) we calculate firstly
\[
P_\alpha (\theta\bar\chi)=P_\alpha(\alpha \tfrac{\theta}{\alpha}{\bar\chi_{\frac{\theta}{\alpha}}})+P_\alpha(\alpha{\bar\chi_\alpha})
=P_\alpha(\alpha{\bar\chi_\alpha})=\alpha(\bar\chi_\alpha-\overline{\chi_\alpha(0)}).
\]
 Hence by Proposition \ref{proj} (3)
$$\ata_{\bar\chi} k_0^\theta=P_\alpha\bar\chi-\overline{\theta(0)}P_\alpha(\theta\bar\chi)=\overline{\chi(0)}k_0^\alpha-
  \overline{\theta(0)}  \alpha(\bar\chi_\alpha-\overline{\chi_\alpha(0)}).$$
To prove (3) note that by Proposition \ref{cos}
\begin{multline*}
A^{\theta,\alpha}_{\psi}\tilde{k}_0^{\theta}=
P_{\alpha}(\psi P_{\theta}\tilde{k}_0^{\theta})=
P_{\alpha}(\psi P_{\alpha}\tilde{k}_0^{\theta})+P_{\alpha}(\psi \alpha P_{\frac{\theta}{\alpha}}\bar\alpha\tilde{k}_0^{\theta})=
P_{\alpha}(\psi P_{\alpha}\tilde{k}_0^{\theta}) \\=
P_{\alpha}(\psi \tfrac{\theta}{\alpha}(0)\tilde{k}_0^{\alpha})=
\tfrac{\theta}{\alpha}(0)P_{\alpha}(\psi \bar{z} \alpha) - \theta(0) P_{\alpha} (\psi\bar{z}) =
\tfrac{\theta}{\alpha}(0) \psi(0)\tilde{k}_0^{\alpha}-\theta(0)S^*_{\alpha}\psi,
\end{multline*}
since
\begin{multline*}
P_{\alpha} \psi\bar{z}\alpha=(\alpha P^-\bar{\alpha}-P^-)(\psi \alpha \bar{z})=\alpha P^-(\psi \bar{z})-\psi(0)\alpha(0) \bar{z}\\
 =\alpha \psi(0)\bar{z}-\psi(0)\alpha(0)\bar{z}=\psi(0)\tilde{k}_0^{\alpha}.
\end{multline*}
To show (4) we calculate
\begin{equation*}
A^{\theta,\alpha}_{\bar{\chi}}\tilde{k}_0^{\theta}=
P_{\alpha}(\bar{\chi}\bar{z}(\theta-\theta(0)))=
P_{\alpha}(\theta\bar{\chi}\bar{z})-\theta(0)P_{\alpha}(\bar{\chi}\bar{z}) =
P_{\alpha}C_{\theta}\chi,
\end{equation*}
and by Corollary \ref{wn2}
$$P_{\alpha}(C_{\theta}\chi)=C_{\alpha}P_{\alpha} \tfrac{\bar{\theta}}{\bar{\alpha}} \chi=C_{\alpha}\chi_{\alpha}.$$
The equality in (5) follows from
$$A^{\alpha,\theta}_{\bar{\psi}}k_0^{\alpha}= P_\theta(\bar\psi(1-\overline{\alpha(0)}\alpha))=
\overline{\psi(0)}k_0^\theta-\overline{\alpha(0)}P_\theta(\alpha\bar\psi)
$$and
$$P_\theta(\alpha\bar\psi)= \alpha\bar\psi-\theta P(\bar\theta\alpha\bar\psi)=\alpha\bar\psi-\theta P{\tfrac{\bar\theta}{\bar\alpha}\bar\psi}=\alpha\bar\psi-\theta\, \overline{\tfrac{\theta}{\alpha}(0)\,\psi(0)}.$$
The equality in (6) follows from
$$A^{\alpha,\theta}_\chi k_0^\alpha=P_\theta(\chi(1-\overline{\alpha(0)}\alpha))=\chi-\overline{\alpha(0)}P_\theta(\alpha\chi)=\chi-\overline{\alpha(0)}\alpha\chi_{\frac{\theta}{\alpha}},$$
and (7) follows from
\begin{equation*}
A^{\alpha,\theta}_{\bar{\psi}}\tilde{k}_0^{\alpha}=
P_{\theta}(\bar{\psi}\bar{z}(\alpha-\alpha(0)))=
P_{\theta}(\alpha \bar{z}\bar{\psi})-\alpha(0) P_{\theta}(\bar{z}\bar{\psi})=P_{\theta}C_{\alpha}\psi=C_\alpha\psi.
\end{equation*}
Finally (8) follows from
\begin{equation*}
A^{\alpha,\theta}_\chi \tilde k_0^\alpha=P_\theta(\chi \tilde k_0^\alpha)=P_\theta(\alpha\chi \bar z)-\alpha(0)P_\theta(\bar z\chi)=P_\theta C_\theta(\bar\chi\tfrac{\theta}{\alpha})-\alpha(0)S^*_\theta\chi.
\end{equation*}

\end{proof}


\section{The symbols of zero in $\mathcal{T}(\theta,\alpha)$}
In what follows, for $f\in H^2$ let $f_i$, $f_o$  denote the inner and outer factors, respectively, in an inner-outer factorization $f=f_i\;f_o$. Recall that $f_i$ and $f_o$  are defined up to multiplication by a constant.
 For $\alpha$ and $\theta$  nonconstant inner functions, let $GCD(\theta,\alpha)$ denote the greatest common divisor of $\theta$ and $\alpha$, which is also defined up to a constant.

In this section we study the symbols for which the corresponding asymmetric truncated Toeplitz operator  is  the zero operator. This is equivalent to the kernel of the operator being equal to $K^2_\theta$.

We start by characterizing the kernels of some asymmetric truncated Toeplitz operators. The following result generalizes the disk versions of Theorem 7.2 in \cite{CCJP} and Theorem 3.2 in \cite{CCJP1}.


\begin{theorem}\label{l1} Let $\alpha$ and $\theta$ be nonconstant inner functions and let $\varphi\in H^\infty$, $\varphi\ne 0$, with inner factor
$\varphi_i$.
Then
\begin{enumerate}
\item $\ker A_\varphi^{\theta,\alpha}=K^2_\theta\cap\frac{\alpha}{\psi}H^2$ with $\psi=GCD(\alpha,\varphi_i)$; in particular if $\frac{\alpha}{\psi}\leqslant \theta$,  then
 $\ker A_\varphi^{\theta,\alpha}=\frac{\alpha}{\psi}K^2_{\frac{\theta \psi}{\alpha}}$\,;
 \item if $\theta\leqslant\alpha$, then $\ker A_{\bar\varphi}^{\theta,\alpha}=\ker \big(A_\varphi^{\alpha,\theta}\big)^*=K^2_{GCD(\theta,\psi)}$,  with $\psi=GCD(\alpha,\varphi_i)$.
 \end{enumerate}
\end{theorem}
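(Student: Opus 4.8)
The plan is to characterize the kernel directly from the defining relation $A_\varphi^{\theta,\alpha}f = P_\alpha(\varphi f) = 0$, which holds iff $\varphi f \in \alpha H^2$ (since the orthogonal complement of $K^2_\alpha$ inside $H^2$ is exactly $\alpha H^2$, and $P_\alpha$ annihilates precisely that part together with $\overline{H^2_0}$; for $f\in K^2_\theta\subset H^2$ and $\varphi\in H^\infty$ the product $\varphi f$ lies in $H^2$, so the vanishing condition is $\varphi f\in \alpha H^2$). First I would establish (1). Writing $\varphi = \varphi_i\varphi_o$ and $\psi = GCD(\alpha,\varphi_i)$, the condition $\varphi f\in\alpha H^2$ becomes $\varphi_i\varphi_o f \in \alpha H^2$. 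Since $\varphi_o$ is outer it is invertible as a multiplier on the relevant inner-function lattice and does not affect divisibility by the inner function $\alpha$; the essential requirement is that the inner factor of $\varphi f$ be divisible by $\alpha$. Factoring out the common divisor $\psi$, divisibility of $\varphi_i f$ by $\alpha$ is equivalent to divisibility of $(\varphi_i/\psi)f$ by $\alpha/\psi$, and since $GCD(\alpha/\psi,\varphi_i/\psi)=1$ by construction of $\psi$, this forces $\alpha/\psi$ to divide $f$. Hence $f\in \frac{\alpha}{\psi}H^2$, and combined with $f\in K^2_\theta$ this gives $\ker A_\varphi^{\theta,\alpha}=K^2_\theta\cap\frac{\alpha}{\psi}H^2$.

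For the ``in particular'' clause I would assume $\frac{\alpha}{\psi}\leqslant\theta$ and compute the intersection $K^2_\theta\cap\frac{\alpha}{\psi}H^2$ explicitly. Writing $\beta=\frac{\alpha}{\psi}$, an element of $\beta H^2$ has the form $\beta g$ with $g\in H^2$; requiring $\beta g\in K^2_\theta=H^2\ominus\theta H^2$ means $\bar\theta\beta g\in\overline{H^2_0}$ by \eqref{A}, i.e. $\frac{\bar\theta}{\bar\beta}g=\overline{\frac{\theta}{\beta}}\,g\in\overline{H^2_0}$ together with $g\in H^2$, which by \eqref{A} applied to the inner function $\frac{\theta}{\beta}=\frac{\theta\psi}{\alpha}$ says exactly $g\in K^2_{\frac{\theta\psi}{\alpha}}$. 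This yields $\ker A_\varphi^{\theta,\alpha}=\frac{\alpha}{\psi}K^2_{\frac{\theta\psi}{\alpha}}$ as claimed.

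For (2) I would use the adjoint relation from Proposition \ref{l3}, namely $\big(A_\varphi^{\alpha,\theta}\big)^*=A_{\bar\varphi}^{\theta,\alpha}$, so that the two kernels in the statement coincide and it suffices to analyze $\ker A_{\bar\varphi}^{\theta,\alpha}$ under the hypothesis $\theta\leqslant\alpha$. Here $A_{\bar\varphi}^{\theta,\alpha}f=P_\alpha(\bar\varphi f)=0$ with $\bar\varphi$ coanalytic; I would expand $\bar\varphi f$ and project. Using Proposition \ref{proj} to write $P_\alpha$ in terms of $P$ and $P^-$, the vanishing condition should reduce to a statement about the analytic part of $\bar\varphi f$ being absorbed by $\alpha$, and since $\theta\leqslant\alpha$ the relevant divisibility is governed by $GCD(\theta,\varphi_i)$. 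The expected outcome is that $f$ must be annihilated by the inner factor complementary to $GCD(\theta,\psi)$, collapsing the kernel to $K^2_{GCD(\theta,\psi)}$.

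The main obstacle I anticipate is the careful bookkeeping of inner and outer factors in part (1): one must argue rigorously that the outer factor $\varphi_o$ contributes nothing to the divisibility condition (this uses that $\varphi_o H^2$ is dense in $H^2$, or equivalently that multiplication by an outer function has trivial inner part, so $\varphi_o f\in\alpha H^2 \Leftrightarrow f\in\alpha H^2$ is \emph{not} quite right and needs the inner-factor formulation), and that passing to the quotient by $\psi=GCD(\alpha,\varphi_i)$ correctly yields coprimality $GCD(\alpha/\psi,\varphi_i/\psi)=1$ forcing $\alpha/\psi\mid f$. The coanalytic computation in (2) is the secondary difficulty, since projecting $P_\alpha(\bar\varphi f)$ requires tracking which pieces land in $\overline{H^2_0}$ versus $\alpha H^2$; invoking the adjoint identity to reduce (2) to the already-understood analytic framework is the cleanest route and is what I would rely on to keep the factor-chasing under control.
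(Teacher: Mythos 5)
Your part (1) is correct and is essentially the paper's own argument: $f\in\ker A_\varphi^{\theta,\alpha}$ iff $\varphi f\in\alpha H^2$ iff $\alpha$ divides $\varphi_i f_i$, and coprimality of $\frac{\alpha}{\psi}$ and $\frac{\varphi_i}{\psi}$ forces $\frac{\alpha}{\psi}$ to divide $f_i$; your explicit verification that $K^2_\theta\cap\frac{\alpha}{\psi}H^2=\frac{\alpha}{\psi}K^2_{\frac{\theta\psi}{\alpha}}$ when $\frac{\alpha}{\psi}\leqslant\theta$ is just an unwinding of the decomposition $K^2_\theta=K^2_{\frac{\alpha}{\psi}}\oplus\frac{\alpha}{\psi}K^2_{\frac{\theta\psi}{\alpha}}$ that the paper cites.

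Part (2), however, contains a genuine gap: what you offer is a plan together with the expected answer ("should reduce to\dots", "the expected outcome is\dots"), not a proof. Two ideas are missing. First, the adjoint identity $(A_\varphi^{\alpha,\theta})^*=A_{\bar\varphi}^{\theta,\alpha}$ from Proposition \ref{l3} only yields the trivial first equality $\ker A_{\bar\varphi}^{\theta,\alpha}=\ker\big(A_\varphi^{\alpha,\theta}\big)^*$; it does not reduce the coanalytic problem to the analytic one, since the symbol remains $\bar\varphi$. Second, the computation of $\ker A_{\bar\varphi}^{\theta,\alpha}$ needs two concrete steps. (a) For $f\in K^2_\theta\subset K^2_\alpha$, the condition $P_\alpha(\bar\varphi f)=0$ says a priori only that $P(\bar\varphi f)\in\alpha H^2$; one must show it forces $P(\bar\varphi f)=0$. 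This follows from near invariance of $K^2_\alpha=\ker T_{\bar\alpha}$ under multiplication by conjugate-analytic functions: since $\bar\alpha\bar\varphi f=\bar\varphi(\bar\alpha f)\in\overline{H^2_0}$, one gets $P(\bar\varphi f)\in K^2_\alpha$, hence $P(\bar\varphi f)\in K^2_\alpha\cap\alpha H^2=\{0\}$. (b) The device that actually converts the coanalytic problem into the analytic framework of part (1) is the conjugation, not the adjoint: $\bar\varphi f\in\overline{H^2_0}$ iff $\varphi\,\alpha\bar z\bar f=\varphi\,C_\alpha f\in\alpha H^2$, i.e. $C_\alpha f\in\ker A_\varphi^{\alpha}=K^2_\alpha\cap\frac{\alpha}{\psi}H^2$ by part (1) applied with $\theta=\alpha$; translating back, $C_\alpha f\in K^2_\alpha\cap\frac{\alpha}{\psi}H^2$ iff $f\in K^2_\alpha\cap\psi\,\overline{H^2_0}=K^2_\psi$, and intersecting with $K^2_\theta$ gives $K^2_{GCD(\theta,\psi)}$. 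Without (a) and (b) your part (2) amounts to asserting the conclusion; with them it becomes exactly the paper's proof.
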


\begin{proof}
(1) Let $f\in K^2_\theta$.
Note that $f\in \ker A_\varphi^{\theta,\alpha}$ if and only if $\varphi f\in\alpha H^2$, which is equivalent to $\alpha$ dividing $\varphi_i f_i$, where $f_i$ is the inner factor of $f$. Since $\frac{\alpha}{\psi}$ and $\frac{\varphi_i}{\psi}$ are relatively prime, $f_i$ is divisible  by $\frac{\alpha}{\psi}$. Hence $\ker A_\varphi^{\theta,\alpha}=\{f\in K^2_\theta : f\in \frac{\alpha}{\psi}H^2 \}= K^2_\theta\cap\frac{\alpha}{\psi}H^2$. If $\frac{\alpha}{\psi}\leqslant \theta$, the result follows from the decomposition $K^2_\theta=K^2_{\frac{\alpha}{\psi}}\oplus\frac{\alpha}{\psi}K^2_{\frac{\theta \psi}{\alpha}}$.

(2) If $\theta\leqslant\alpha$, in which case $K^2_\theta\subset K^2_\alpha$, we have
\begin{align*}\ker A_{\bar\varphi}^{\theta,\alpha}&=\{f\in K^2_\theta\colon P_\alpha (\bar{\varphi}f)=0\}=
\{f\in K^2_\theta\colon P (\bar{\varphi}f)=0\}\\&=\{f\in K^2_\theta\colon \bar\varphi f\in \overline{H^2_0}\}=\{f\in K^2_\theta\colon \varphi\alpha\bar z\bar f\in \alpha H^2\}\\
&=\{f\in K^2_\theta\colon \varphi \,C_\alpha f\in\alpha H^2\}=\{f\in K^2_\theta\colon C_\alpha f\in \ker A_\varphi^\alpha=K^2_\alpha\cap\tfrac{\alpha}{\psi} H^2\}\end{align*}
by (1).  Now,
$ C_\alpha f\in K^2_\alpha\cap\frac{\alpha}{\psi} H^2$ if and only if $ f\in  K^2_\alpha\cap\psi\,\overline{H^2_0}$,  which is equivalent to $f\in K^2_\alpha\cap K^2_\psi=K^2_\psi.$ Hence $f\in K^2_\theta\cap K^2_\psi= K^2_{GCD(\theta,\psi)}$.
\end{proof}

\begin{lemma}\label{l2} Let $\alpha$ and $\theta$ be nonconstant inner functions   and let $\varphi\in H^2$.
Assume that $A_\varphi^{\alpha,\theta}$ is an asymmetric truncated Toeplitz operator. Then $A_\varphi^{\alpha,\theta}=0$ if and only if $ \varphi\in\theta H^2$.
\end{lemma}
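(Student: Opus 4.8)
The plan is to prove Lemma \ref{l2}, which characterizes when $A_\varphi^{\alpha,\theta}=0$ for $\varphi\in H^2$, by establishing the two implications separately. The statement concerns the operator $A_\varphi^{\alpha,\theta}\colon K^2_\alpha\to K^2_\theta$, so the relevant projection is $P_\theta$ and the operator acts by $A_\varphi^{\alpha,\theta}f=P_\theta(\varphi f)$ for $f\in K^2_\alpha$.

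\textbf{The easy direction.} First I would dispose of the ``if'' direction. Suppose $\varphi\in\theta H^2$, say $\varphi=\theta h$ with $h\in H^2$. Then for any $f\in K^2_\alpha\subset H^2$ we have $\varphi f=\theta h f\in\theta H^2$ (at least on the dense domain where $\varphi f\in L^2$), and since $\theta H^2\perp K^2_\theta$ by the very definition $K^2_\theta=H^2\ominus\theta H^2$, we get $P_\theta(\varphi f)=0$. Hence $A_\varphi^{\alpha,\theta}=0$ on its domain, and by continuity on all of $K^2_\alpha$.

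\textbf{The converse.} For the ``only if'' direction, assume $A_\varphi^{\alpha,\theta}=0$. The natural test function is $k_0^\alpha$, the reproducing kernel at $0$, since $\langle f,k_0^\alpha\rangle=f(0)$ and $k_0^\alpha=1-\overline{\alpha(0)}\alpha$ is concrete. Applying the operator to $k_0^\alpha$ gives $0=A_\varphi^{\alpha,\theta}k_0^\alpha=P_\theta(\varphi k_0^\alpha)=P_\theta(\varphi)-\overline{\alpha(0)}P_\theta(\alpha\varphi)$. A cleaner route, however, is to exploit the intertwining/shift structure together with cyclicity: since $k_0^\alpha$ is a cyclic vector for $S_\alpha$, the span of $\{S_\alpha^n k_0^\alpha\}$ is dense in $K^2_\alpha$, so it suffices to understand the action on these. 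More directly, I would argue that $A_\varphi^{\alpha,\theta}=0$ forces $\varphi f\in\theta H^2$ (equivalently $\varphi f\perp K^2_\theta$) for every $f$ in the dense set $K^\infty_\alpha$; taking $f=1\in K^\infty_\alpha$ (note $1\in K^2_\alpha$ since $1$ is not divisible by the nonconstant inner $\alpha$) immediately yields $\varphi=\varphi\cdot 1\in\theta H^2$, because $P_\theta\varphi=0$ means $\varphi\in\theta H^2\oplus\overline{H^2_0}$, and combined with $\varphi\in H^2$ this gives $\varphi\in\theta H^2$.

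\textbf{Main obstacle.} The subtle point is the interplay between the possibly unbounded domain and the claim that vanishing on the dense domain already forces $\varphi\in\theta H^2$; one must check $1\in\mathcal D(A_\varphi^{\alpha,\theta})$, i.e.\ $\varphi\cdot 1=\varphi\in L^2$, which holds since $\varphi\in H^2\subset L^2$. Thus the whole converse reduces to evaluating at the single function $f=1$: from $P_\theta\varphi=0$ and the decomposition $L^2=\theta H^2\oplus K^2_\theta\oplus\overline{H^2_0}$, the condition $\varphi\in H^2$ rules out the $\overline{H^2_0}$ component, leaving $\varphi\in\theta H^2$. I expect the only genuine care needed is justifying that $1\in K^2_\alpha$ and that no boundedness hypothesis is truly required for this evaluation, since $\varphi\in H^2$ guarantees $\varphi\in L^2=\mathcal D$ when multiplied by the bounded function $1$.
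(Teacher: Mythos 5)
Your ``if'' direction is fine and essentially matches the paper's (the paper simply works on $K^\infty_\alpha$ and uses density). The converse, however, breaks at its only substantive step: the claim that $1\in K^2_\alpha$ ``since $1$ is not divisible by the nonconstant inner $\alpha$''. Membership in a model space is not a divisibility condition. By \eqref{A}, $f\in K^2_\alpha$ if and only if $f\in H^2$ and $\bar\alpha f\in\overline{H_0^2}$; for $f=1$ this requires $\bar\alpha\in\overline{H_0^2}$, i.e.\ $\alpha(0)=0$. Equivalently, $P_\alpha 1=k_0^\alpha=1-\overline{\alpha(0)}\,\alpha$, which equals $1$ only when $\alpha(0)=0$. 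For a concrete failure take $\alpha(z)=(z-\tfrac12)/(1-\tfrac12 z)$: then $\langle 1,\alpha\rangle=\overline{\alpha(0)}=-\tfrac12\neq 0$, so $1$ is not orthogonal to $\alpha H^2$ and $1\notin K^2_\alpha$. Since $1$ is not an admissible test function, you never obtain $P_\theta\varphi=0$, and the whole converse collapses except in the special case $\alpha(0)=0$.

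The gap is repairable, and in fact by the very computation you wrote down and then abandoned: test at $f=k_0^\alpha\in K^\infty_\alpha\subset\mathcal{D}(A_\varphi^{\alpha,\theta})$. From $A_\varphi^{\alpha,\theta}k_0^\alpha=P_\theta(\varphi k_0^\alpha)=0$ and $\varphi k_0^\alpha\in H^2=K^2_\theta\oplus\theta H^2$ you get $\varphi k_0^\alpha\in\theta H^2$. Since $\alpha$ is a nonconstant inner function, $|\alpha(0)|<1$, so $|k_0^\alpha|\geq 1-|\alpha(0)|>0$ on $\mathbb{D}$ and $(k_0^\alpha)^{-1}\in H^\infty$; hence $\varphi=(\varphi k_0^\alpha)\,(k_0^\alpha)^{-1}\in\theta H^2$. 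Note that this repaired argument is genuinely different from, and shorter than, the paper's proof, which factors $\varphi$ and each $f\in K^\infty_\alpha$ into inner and outer parts, sets $\psi=GCD(\theta,\varphi_i)$, uses coprimality of $\varphi_i/\psi$ and $\theta/\psi$ together with density of $K^\infty_\alpha$ to conclude $K^2_\alpha\subset\frac{\theta}{\psi}H^2$, and then invokes near-invariance of the Toeplitz kernel $K^2_\alpha=\ker T_{\bar\alpha}$ to force $\frac{\theta}{\psi}$ to be constant. But as written, your proof does not establish the lemma.
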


\begin{proof} If $ \varphi\in\theta H^2$ and $f\in K^\infty_\alpha$, then $A_\varphi^{\alpha,\theta}f=P_\theta(\varphi f)=P_\theta (\theta \varphi_1 f)=0$, where $\varphi=\theta\varphi_1$ and $\varphi_1\in H^2$.  Since $  K^\infty_\alpha$ is dense in $K^2_\alpha$, then $A_\varphi^{\alpha,\theta}=0$.

For the converse implication
let $\psi=GCD(\theta,\varphi_i)$.
If $A_\varphi^{\alpha,\theta}=0$ and $f\in  K^\infty_\alpha$, then $\varphi f\in \theta H^2$. Hence $\theta$ divides $\varphi_i f_i$ and consequently $\frac{\varphi_i}{\psi}f_i\in \frac{\theta}{\psi}H^2$. Since $\frac{\varphi_i}{\psi}$ and $\frac{\theta}{\psi}$ are relatively prime, then $f_i\in \frac{\theta}{\psi}H^2$. By density of $K^\infty_\alpha$  in $K^2_\alpha$ we have   $K^2_\alpha\subset \frac{\theta}{\psi}H^2$, which is possible only if $\frac{\theta}{\psi}$ is a constant, because $K^2_\alpha$ is the kernel of the Toeplitz operator $T_{\bar\alpha}$ (\cite[Proposition 2.4]{CMP}). Indeed, kernels of Toeplitz operators are near invariant with respect to functions from $\overline{H^\infty}$ (i.e., if $g\in \ker T_{\bar\alpha}$ and $h\in H^\infty$, then $\bar hg\in H^2$ implies $\bar hg\in\ker T_{\bar\alpha}$). Hence $K^2_\alpha\subset \frac{\theta}{\psi}H^2$ implies that, for every $g_0\in K^2_\alpha$, we have $\overline{\frac{\theta}{\psi}}g_0=g_1\in H^2$, so that $g_1\in K^2_\alpha$. Repeating this reasoning for $g_1$ we conclude that $g_0$ can be indefinitely divided by $\frac{\theta}{\psi}$, which is possible only if $\frac{\theta}{\psi}$ is a constant.
\end{proof}

\begin{corollary}
If $\varphi\in H^\infty$, then $A_{\varphi}^{\theta,\alpha}=0$ if and only if $\varphi\in\alpha H^\infty$ and $A_{\bar\varphi}^{\theta,\alpha}=0$ if and only if $\varphi\in\theta H^\infty$.
\end{corollary}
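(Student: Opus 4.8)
The plan is to derive both equivalences from Lemma \ref{l2} together with the adjoint formula of Proposition \ref{l3}, upgrading the $H^2$ membership conclusions of Lemma \ref{l2} to $H^\infty$ membership. First I would note that since $\varphi\in H^\infty\subset L^\infty$, both $A_\varphi^{\theta,\alpha}$ and $A_{\bar\varphi}^{\theta,\alpha}$ are bounded with domain all of $K^2_\theta$, so there are no domain subtleties to worry about and Lemma \ref{l2}'s standing hypothesis is automatically met.

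For the first equivalence, I would apply Lemma \ref{l2} with the roles of $\alpha$ and $\theta$ interchanged. As stated the lemma concerns $A_\varphi^{\alpha,\theta}$ and concludes $\varphi\in\theta H^2$; after the interchange it reads $A_\varphi^{\theta,\alpha}=0$ if and only if $\varphi\in\alpha H^2$. It then remains to observe that for $\varphi\in H^\infty$ the conditions $\varphi\in\alpha H^2$ and $\varphi\in\alpha H^\infty$ coincide: writing $\varphi=\alpha\varphi_1$ with $\varphi_1\in H^2$, we have $\varphi_1=\bar\alpha\varphi\in H^2\cap L^\infty=H^\infty$ since $\alpha$ is inner.

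For the second equivalence, I would use Proposition \ref{l3} to write $\big(A_{\bar\varphi}^{\theta,\alpha}\big)^*=A_\varphi^{\alpha,\theta}$, so that $A_{\bar\varphi}^{\theta,\alpha}=0$ if and only if $A_\varphi^{\alpha,\theta}=0$. Lemma \ref{l2} applied directly (with no interchange) gives $A_\varphi^{\alpha,\theta}=0$ if and only if $\varphi\in\theta H^2$, which for $\varphi\in H^\infty$ is again equivalent to $\varphi\in\theta H^\infty$ by the same inner-factor-division argument.

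Since the statement is essentially a repackaging of the two earlier results, I do not expect a substantial obstacle. The only point needing care is the passage from $H^2$ to $H^\infty$ membership, which rests on the fact that dividing a bounded function by an inner function leaves it bounded; and one must keep track of the direction of the $\alpha$/$\theta$ interchange, since the two equivalences invoke Lemma \ref{l2} with opposite orientations.
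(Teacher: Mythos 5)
Your proposal is correct and follows essentially the same route as the paper: apply Lemma \ref{l2} (with the roles of $\alpha$ and $\theta$ interchanged for the first equivalence, and via the adjoint identity $(A_{\bar\varphi}^{\theta,\alpha})^*=A_{\varphi}^{\alpha,\theta}$ from Proposition \ref{l3} for the second), then upgrade from $H^2$ to $H^\infty$ membership using $H^2\cap L^\infty=H^\infty$. The paper's own proof is just a terser version of this, writing out only the first equivalence and the key step $\bar\alpha\varphi\in H^2\cap L^\infty=H^\infty$.
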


\begin{proof}
If $\ata_\varphi=0$, then by the previous lemma $\bar\alpha\varphi\in H^2\cap L^\infty=H^\infty$. Therefore $\varphi\in \alpha H^\infty$. The converse is obvious.
\end{proof}
The next theorem establishes a necessary and sufficient condition for a bounded asymmetric truncated Toeplitz operator to be the zero operator  in terms of its symbol.


\begin{theorem}\label{zero}
Let $\alpha$, $\theta$ be nonconstant inner functions such that $\alpha\leqslant \theta$. Let $A_\varphi^{\theta,\alpha}\colon K^2_\theta \to K^2_\alpha$ be a bounded asymmetric truncated Toeplitz operator with $\varphi\in L^2$. Then
$A_\varphi^{\theta,\alpha}=0$ if and only if $\varphi\in \alpha H^2 +\overline{\theta H^2}$.
\end{theorem}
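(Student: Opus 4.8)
The plan is to characterize the symbols $\varphi \in L^2$ giving the zero operator $A_\varphi^{\theta,\alpha}$ by reducing to the already-understood $H^2$ case via the adjoint relation in Proposition \ref{l3}. The key identity is that $A_\varphi^{\theta,\alpha}=0$ if and only if $\langle A_\varphi^{\theta,\alpha} f, g\rangle = \langle \varphi f, g\rangle = 0$ for all $f \in K^2_\theta$ and $g \in K^2_\alpha$ (on the appropriate dense domains, using that $K^\infty_\theta$ and $K^\infty_\alpha$ are dense). I would first split $\varphi$ into its analytic and co-analytic parts. Writing $\varphi = P\varphi + P^-\varphi$, the natural guess is that the analytic part contributes the summand $\alpha H^2$ and the co-analytic part contributes $\overline{\theta H^2}$, so that the two conditions decouple.

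First I would handle sufficiency, which should be routine: if $\varphi = \alpha h_1 + \overline{\theta h_2}$ with $h_1, h_2 \in H^2$, then for $f \in K^\infty_\theta$ and $g \in K^\infty_\alpha$ I compute $\langle \varphi f, g\rangle$. For the term $\alpha h_1 f$, I would use $\langle \alpha h_1 f, g\rangle = \langle h_1 f, \bar\alpha g\rangle$, and since $g \in K^2_\alpha$ means $\bar\alpha g \in \overline{H^2_0}$ by \eqref{A}, while $h_1 f \in H^2$, this inner product vanishes. For the term $\overline{\theta h_2}\,f$, I would write $\langle \overline{\theta h_2} f, g\rangle = \langle f, \theta h_2 g\rangle$, and since $f \in K^2_\theta$ gives $\bar\theta f \in \overline{H^2_0}$, equivalently $f \perp \theta H^2$, and $h_2 g \in H^2$ (as $g \in K^\infty_\alpha$), this also vanishes. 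Density then gives $A_\varphi^{\theta,\alpha}=0$.

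The harder direction is necessity, and I expect this to be the main obstacle because an $L^2$ symbol need not split cleanly and one must show the decomposition is \emph{achievable}, not merely that the operator annihilates test functions. My plan is to suppose $A_\varphi^{\theta,\alpha}=0$ and extract information by testing against the reproducing kernels from Proposition \ref{kzero}. Applying the operator to $k_0^\theta$ and using part (1)-type computations shows $P_\alpha(\varphi k_0^\theta)=0$, and more generally $P_\alpha(\varphi k_\lambda^\theta)=0$ for all $\lambda\in\mathbb{D}$; varying $\lambda$ (since $\{k_\lambda^\theta\}$ is linearly dense in $K^2_\theta$) forces $P_\alpha(\varphi f)=0$ for all $f \in K^2_\theta$. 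I would then translate the condition $P_\alpha(\varphi f)=0$ for all $f\in K^2_\theta$ into a statement about $\varphi$ directly: $\varphi f \perp K^2_\alpha$ means $\varphi f \in \alpha H^2 \oplus \overline{H^2_0}$ for every $f$, i.e. $P(\bar\alpha\varphi f)\in \alpha H^2$ after projecting. The decisive step is to choose $f$ cleverly—taking $f = k_0^\theta$ and its shifts $S_\theta^n k_0^\theta$, which lie in $K^\infty_\theta$ by Proposition \ref{k-inf}(3), generates enough relations to pin down the two Fourier-type components of $\varphi$.

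Concretely, I would argue as follows. The vanishing $P_\alpha(\varphi f)=0$ for all $f\in K^2_\theta$ says $\varphi f \in \alpha H^2 + \overline{H^2_0}$. Decompose $\varphi = \varphi_+ + \varphi_-$ with $\varphi_+ = P\varphi \in H^2$ and $\varphi_- = P^-\varphi \in \overline{H^2_0}$. Testing on $f=k_0^\theta = 1 - \overline{\theta(0)}\theta$ and using that $A^{\theta,\alpha}_{\varphi_+}k_0^\theta = P_\alpha\varphi_+$ reduces the analytic part: $P_\alpha \varphi_+ = 0$ combined with the full family $P_\alpha(\varphi_+ f)=0$ forces $\varphi_+ \in \alpha H^2$ by the $H^\infty$/near-invariance argument already used in Lemma \ref{l2} (approximating and invoking that $K^2_\alpha = \ker T_{\bar\alpha}$). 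Symmetrically, applying the adjoint statement $\big(A_\varphi^{\theta,\alpha}\big)^* = A_{\bar\varphi}^{\alpha,\theta} = 0$ from Proposition \ref{l3} and running the same reduction on $\bar\varphi$ with the roles of $\alpha$ and $\theta$ interchanged yields $\overline{\varphi_-} \in \theta H^2$, i.e. $\varphi_- \in \overline{\theta H^2}$. Combining, $\varphi = \varphi_+ + \varphi_- \in \alpha H^2 + \overline{\theta H^2}$, as required. The point demanding care is that $\varphi_+$ need not itself be bounded, so the near-invariance argument must be applied to products $\varphi_+ f$ with $f$ ranging over the dense set $K^\infty_\theta$ and the conclusion passed to the limit, exactly as in the proof of Lemma \ref{l2}.
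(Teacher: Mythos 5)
Your sufficiency argument is fine and is essentially the paper's. The necessity direction, however, rests on a decoupling claim that is false: you assert that $A_\varphi^{\theta,\alpha}=0$ forces $P_\alpha\varphi_+=0$ and, more generally, $P_\alpha(\varphi_+f)=0$ for all $f$, so that $\varphi_+\in\alpha H^2$ and $\varphi_-\in\overline{\theta H^2}$ \emph{separately}. But testing on $k_0^\theta$ only gives
\[
0=A_\varphi^{\theta,\alpha}k_0^\theta=P_\alpha\varphi_+-\overline{\theta(0)}\,P_\alpha(\theta\varphi_-),
\]
(using $\theta H^2\subset\alpha H^2$ and that $P_\alpha$ annihilates $\overline{H^2_0}$), and the cross-term $P_\alpha(\theta\varphi_-)$ need not vanish: the analytic and co-analytic parts of the symbol do not decouple. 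Concretely, take $\varphi=\bar\theta$ with $\theta(0)\neq 0$. Then $\varphi\in\overline{\theta H^2}$, so $A_\varphi^{\theta,\alpha}=0$, yet $\varphi_+=P\varphi=\overline{\theta(0)}$ is a nonzero constant, which lies in $\alpha H^2$ only if $\bar\alpha\in H^2$ (impossible for nonconstant inner $\alpha$), and $\varphi_-=\bar\theta-\overline{\theta(0)}$ does not lie in $\overline{\theta H^2}$ (that would force $\bar\theta\in H^2$). So both of the separate conclusions you aim for are false; only the sum lies in $\alpha H^2+\overline{\theta H^2}$, and only after a constant is shuffled between the two parts.

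Producing that constant is exactly what the paper's proof is engineered to do, and it is the idea your proposal is missing. Writing $\varphi=\psi+\bar\chi$ with $\psi,\chi\in H^2$, the hypothesis gives $A_\psi^{\theta,\alpha}=-A_{\bar\chi}^{\theta,\alpha}$, hence on $K^\infty_\alpha$ the truncated Toeplitz operator $A_\psi^\alpha=-A_{\bar\chi}^\alpha$ commutes with both $S_\alpha$ and $S_\alpha^*$ (Proposition \ref{p1} together with Sarason's results). The defect identity $I_{K^2_\alpha}-S_\alpha S_\alpha^*=k_0^\alpha\otimes k_0^\alpha$ then yields $A_\psi^\alpha k_0^\alpha=c\,k_0^\alpha$ for some constant $c$ --- not $0$ --- whence $P_\alpha(\psi-c)=0$, i.e.\ $\psi-c\in\alpha H^2$. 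A second defect argument plus the intertwining relation upgrades $A_{\bar\chi}^{\theta,\alpha}k_0^\theta=-c\,k_0^\alpha$ to the operator identity $A_{\bar\chi}^{\theta,\alpha}=-c\,P_\alpha$ (via density of $\{S_\theta^nk_0^\theta\}$), so $A^{\alpha,\theta}_{\chi+\bar c}=\bigl(A^{\theta,\alpha}_{\bar\chi+c}\bigr)^*=0$, and Lemma \ref{l2} gives $\chi+\bar c\in\theta H^2$; then $\varphi=(\psi-c)+\overline{(\chi+\bar c)}\in\alpha H^2+\overline{\theta H^2}$. Your near-invariance step (the analogue of Lemma \ref{l2} for an analytic symbol) is a legitimate final ingredient, but it can only be invoked after the constant $c$ has been identified and split off; without that, the assertion ``$P_\alpha(\varphi_+f)=0$ for all $f$'' is unjustified and, as the example shows, false.
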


\begin{proof}
Let us assume that $\varphi=\alpha h_1+\bar{\theta}\bar{h_2}$ for some $h_1, h_2\in H^2$. Take $f\in K^\infty_\theta$. Then
$\varphi f= \alpha h_1 f+ \bar{\theta} \bar{h_2}f$ and $P_\alpha(\alpha h_1 f)=0$.
Since $\bar{\theta}f\in \bar{\theta} K^2_\theta \subset \overline{z H^2}$, then $\bar{\theta}f \bar h_2\in \overline{z H^2}$ and $P_\alpha (\bar{\theta}f \bar h_2)=0$.
Hence $A_\varphi^{\theta,\alpha}=0$ on a dense subset of $K^2_\theta$, which implies that $A_\varphi^{\theta,\alpha}=0$.

For the converse implication let us take $\varphi =\psi+\bar{\chi}\in L^2$ with $\psi, \chi \in H^2$ such that $A_\varphi^{\theta,\alpha}=0$. Note that
$A_\psi^{\theta,\alpha}=-A_{\bar \chi}^{\theta,\alpha}$. Thus $A_\psi^\alpha\,f=A^{\theta,\alpha}_\psi\,f=-A^{\theta,\alpha}_{\bar\chi}\,f=-A^{\alpha}_{\bar\chi}f$ for all $f\in K^\infty_\alpha$ by Proposition \ref{wl}. Hence $A_\psi^\alpha$ commutes  on $K^\infty_\alpha$ not only with $S_\alpha$ but also with $S^*_\alpha$, see \cite{Sarason} and Proposition \ref{p1}. Therefore, since $k_0^\alpha\in K^\infty_\alpha$, $S^*_\alpha k_0^\alpha=-\overline{\alpha(0)}\tilde k_0^\alpha\in K^\infty_\alpha$ (cf. \eqref{D})
and
$(I_{K^2_\alpha}-S_\alpha S^*_\alpha)k_0^\alpha  \in K^\infty_\alpha$, it follows that
\begin{multline*}
A_\psi^{\theta,\alpha}
 (I_{K^2_\alpha}-S_\alpha S^*_\alpha)k_0^\alpha=
(A_\psi^\alpha - S_\alpha A_\psi^\alpha S^*_\alpha) k_0^\alpha=
( A_\psi^\alpha - S_\alpha S^*_\alpha A_\psi^\alpha ) k_0^\alpha=\\= (I_{K^2_\alpha}-S_\alpha S_\alpha^*)A_\psi^\alpha  k_0^\alpha=
 (k_0^\alpha \otimes k_0^\alpha)A_\psi^\alpha k_0^\alpha=\langle A_\psi^\alpha k_0^\alpha, k_0^\alpha\rangle k_0^\alpha.
\end{multline*}
On the other hand, by Lemma 2.4 \cite{Sarason}
\begin{multline*}
A_\psi^{\theta,\alpha}
(I_{K^2_\alpha}-S_\alpha S^*_\alpha)k_0^\alpha=A_\psi^\alpha (k_0^\alpha\otimes k_0^\alpha)k_0^\alpha=\\
((A_\psi^\alpha k_0^\alpha)\otimes k_0^\alpha)k_0^\alpha=\langle k_0^\alpha, k_0^\alpha\rangle A_\psi^\alpha k_0^\alpha.
\end{multline*}
Hence
$$\langle k_0^\alpha, k_0^\alpha\rangle A_\psi^\alpha k_0^\alpha=\langle A_\psi^\alpha k_0^\alpha, k_0^\alpha\rangle k_0^\alpha, $$
and it follows that there is $c\in\mathbb{C}$ such that
\begin{equation}\label{e1} A_\psi^\alpha k_0^\alpha=c k_0^\alpha.\end{equation}
Therefore
$$0=(A_\psi^\alpha -cI_{K^2_\alpha})k_0^\alpha =P_\alpha ((\psi -c)(1-\overline{\alpha(0)}\alpha))=P_\alpha (\psi -c),$$
which implies that $\psi -c \in\alpha H^2$.

Let us now consider $A^{\theta,\alpha}_{\bar\chi}$. We have $A^{\theta,\alpha}_{\bar\chi}=-A^{\theta,\alpha}_{\psi}$ and $A^{\theta,\alpha}_{\psi}$ intertwines $S_\alpha$ and  $S_\theta$ on $K^\infty_\theta$ by Proposition \ref{p1}. Thus
$A^{\theta,\alpha}_{\bar\chi}S_\theta f=S_\alpha A^{\theta,\alpha}_{\bar\chi} f$ for $f\in K^\infty_\theta$. Also $A^{\theta,\alpha}_{\bar\chi}S^*_\theta P_\alpha=A_{\bar\chi}^\alpha S^*_\alpha P_\alpha$, since $K_\alpha^2$ is invariant for both $S^*_\theta$ and  $A^{\theta,\alpha}_{\bar\chi}$.
Since $P_\alpha k_0^\theta=k_0^\alpha\in K^\infty_\alpha\subset K^\infty_\theta$ and as above $S^*_\theta P_\alpha k_0^\theta\in K^\infty_\theta$, hence
\begin{align*}
A&_{\bar\chi}^{\theta,\alpha} (I_{K^2_\theta}-S_\theta S^*_\theta)P_\alpha k_0^\theta=
(A_{\bar\chi}^{\theta,\alpha} P_\alpha-S_\alpha A_{\bar\chi}^{\theta,\alpha} S^*_\theta P_\alpha) k_0^\theta=\\&=
(A_{\bar\chi}^{\theta,\alpha} P_\alpha-S_\alpha A_{\bar\chi}^{\alpha} S^*_\alpha P_\alpha) k_0^\theta=
( A_{\bar\chi}^{\theta,\alpha} P_\alpha- S_\alpha S^*_\alpha A_{\bar\chi}^{\alpha} P_\alpha) k_0^\theta=\\&= (I_{K^2_\alpha}-S_\alpha S_\alpha^*)(A_{\bar\chi}^{\theta,\alpha})_{|K^2_\alpha} P_\alpha k_0^\theta=
 (k_0^\alpha \otimes k_0^\alpha)A_{\bar\chi}^{\theta,\alpha} k_0^\alpha=\langle A_{\bar\chi}^{\theta,\alpha} k_0^\alpha, k_0^\alpha\rangle k_0^\alpha
\end{align*}
and
\begin{multline*}
A_{\bar\chi}^{\theta,\alpha} (I_{K^2_\theta}-S_\theta S^*_\theta)P_\alpha k_0^\theta=A_{\bar\chi}^{\theta,\alpha} (k_0^\theta\otimes k_0^\theta)P_\alpha k_0^\theta=\\=
(A_{\bar\chi}^{\theta,\alpha} k_0^\theta\otimes k_0^\theta)k_0^\alpha
=\langle k_0^\alpha, k_0^\theta\rangle A_{\bar\chi}^{\theta,\alpha} k_0^\theta.
\end{multline*}
It follows that
$
A_{\bar\chi}^{\theta,\alpha} k_0^\theta = c_1 k_0^\alpha.$
 Moreover, \begin{equation}\label{q2}
 A_{\bar\chi}^{\theta,\alpha}=c_1 P_\alpha\in\tcalta .\end{equation} Indeed, since
 $A_{\bar\chi}^{\theta,\alpha}=-A_\psi^{\theta,\alpha}$, by Proposition~\ref{p1} the operator $A_{\bar\chi}^{\theta,\alpha}$ intertwines $S_\theta$ and $S_\alpha$ on $K^\infty_\theta$. By Proposition~\ref{k-inf} we know that $S^n_\theta k_0^\theta \in K^\infty_\theta$ for $n\geq 0$, thus we have
$$
A_{\bar\chi}^{\theta,\alpha}S^n_\theta k_0^\theta = S^n_\alpha A_{\bar\chi}^{\theta,\alpha} k_0^\theta=c_1 S_\alpha^n P_\alpha k_0^\theta=c_1 P_\alpha S^n_\theta k_0^\theta.
$$ Equality \eqref{q2} follows from linear density of $S^n_\theta k_0^\theta$, $n\geq 0$, in $K^2_\theta$.
 Since $A_{\bar\chi}^{\theta,\alpha}=-A_\psi^{\theta,\alpha}$, by \eqref{e1} and \eqref{q2} we have that $c_1=-c$. Note that $(A_{\bar\chi+c}^{\theta,\alpha})^*=A_{\chi+\bar c}^{\alpha, \theta}$ by Proposition \ref{l3}.
 Using Lemma \ref{l2} we obtain that $A_{\chi+\bar c}^{\alpha,\theta} = 0$ if and only if  $\bar\chi+c\in \overline{ \theta {H}^2}$. Therefore $\varphi = (\psi-c)+(\bar\chi+c)$ is in $\alpha H^2 +\overline{\theta H^2}$.
\end{proof}


\begin{corollary}\label{csymbol}
Let $\alpha\leqslant\theta$ be nonconstant inner functions and let $\ata_\varphi\in\mathcal{T}(\theta,\alpha)$.
For $\varphi\in L^2$ there are functions $\psi\in K^2_\alpha$ and $\chi\in K^2_\theta$ such that $A_\varphi^{\theta,\alpha}=A_{\psi+\bar\chi}^{\theta,\alpha}$. Moreover, $A_{\psi+\bar\chi}^{\theta,\alpha}=A_{\psi_1+\bar\chi_1}^{\theta,\alpha}$ iff $\psi_1=\psi+c k_0^\alpha$, $\chi_1=\chi -\bar ck_0^\theta$ for some constant $c$.
\end{corollary}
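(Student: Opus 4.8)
The plan is to read off both assertions from Theorem~\ref{zero}, which identifies the symbols of the zero operator as exactly $\alpha H^2+\overline{\theta H^2}$, combined with the orthogonal decompositions $H^2=K^2_\alpha\oplus\alpha H^2$ and $H^2=K^2_\theta\oplus\theta H^2$ and the projection formulas of Proposition~\ref{proj}.

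\emph{Existence of a symbol in $K^2_\alpha+\overline{K^2_\theta}$.} First I would split $\varphi$ via $L^2=H^2\oplus\overline{H^2_0}$, writing $\varphi=P\varphi+P^-\varphi$. Using $H^2=K^2_\alpha\oplus\alpha H^2$, the analytic part decomposes as $P\varphi=\psi+\alpha g$ with $\psi\in K^2_\alpha$ and $g\in H^2$. For the coanalytic part, I would observe that $\overline{P^-\varphi}\in H^2_0\subset H^2=K^2_\theta\oplus\theta H^2$, so $\overline{P^-\varphi}=\chi+\theta w$ with $\chi\in K^2_\theta$ and $w\in H^2$, whence $P^-\varphi=\bar\chi+\overline{\theta w}$. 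Collecting terms gives $\varphi=(\psi+\bar\chi)+(\alpha g+\overline{\theta w})$, where the second summand lies in $\alpha H^2+\overline{\theta H^2}$. By the ``if'' direction of Theorem~\ref{zero}, which holds on the dense domain $K^\infty_\theta$ with no boundedness hypothesis, $A_{\alpha g+\overline{\theta w}}^{\theta,\alpha}f=0$ for all $f\in K^\infty_\theta$; hence $\ata_\varphi$ and $\ata_{\psi+\bar\chi}$ agree on $K^\infty_\theta$ and so coincide as bounded operators.

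\emph{Uniqueness, converse direction.} This is immediate from Proposition~\ref{wl}. If $\psi_1=\psi+ck_0^\alpha$ and $\chi_1=\chi-\bar ck_0^\theta$, then $(\psi_1+\bar\chi_1)-(\psi+\bar\chi)=c\,(k_0^\alpha-\overline{k_0^\theta})$, and since $\ata_{k_0^\alpha}=P_\alpha=\ata_{\overline{k_0^\theta}}$ we get $\ata_{\psi_1+\bar\chi_1}=\ata_{\psi+\bar\chi}+c(P_\alpha-P_\alpha)=\ata_{\psi+\bar\chi}$. For the forward direction I would assume $\ata_{\psi+\bar\chi}=\ata_{\psi_1+\bar\chi_1}$ and set $u=\psi-\psi_1\in K^2_\alpha$, $v=\chi-\chi_1\in K^2_\theta$, so that $\ata_{u+\bar v}=0$. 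Theorem~\ref{zero} then furnishes $g,w\in H^2$ with $u+\bar v=\alpha g+\overline{\theta w}$. Rearranging to $u-\alpha g=\overline{\theta w-v}$ places the left side in $H^2$ and the right side in $\overline{H^2}$, so both equal a constant $d\in H^2\cap\overline{H^2}=\mathbb{C}$. Applying $P_\alpha$ to $u=\alpha g+d$ (with $P_\alpha1=k_0^\alpha$) gives $u=d\,k_0^\alpha$, and applying $P_\theta$ to $v=\theta w-\bar d$ gives $v=-\bar d\,k_0^\theta$; setting $c=-d$ yields precisely $\psi_1=\psi+ck_0^\alpha$ and $\chi_1=\chi-\bar ck_0^\theta$.

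All the computations are elementary once the structural facts are in place. The point that needs care is the forward direction: one must check that the entire ambiguity collapses to a single complex constant. This is exactly what $H^2\cap\overline{H^2}=\mathbb{C}$ delivers, and the key observation is that the rearranged identity $u-\alpha g=\overline{\theta w-v}$ forces both sides into this one-dimensional overlap, which the projections $P_\alpha$ and $P_\theta$ then convert into the stated one-parameter family.
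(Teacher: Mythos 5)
Your proof is correct and follows essentially the same route as the paper: split $\varphi$ into analytic and antianalytic parts, project them onto $K^2_\alpha$ and $K^2_\theta$, and invoke Theorem \ref{zero} together with Proposition \ref{wl} for the uniqueness statement. The only difference, a minor but slightly cleaner one, is in the forward uniqueness direction: you rearrange $u+\bar v=\alpha g+\overline{\theta w}$ so that one side is analytic and the other antianalytic, whence both equal a single constant $d\in H^2\cap\overline{H^2}=\mathbb{C}$ and the perturbations of $\psi$ and $\chi$ come out linked automatically, whereas the paper applies $P_\alpha$ and $P_\theta$ separately to obtain two a priori unrelated constants $c_1,c_2$ and then needs one further appeal to Proposition \ref{wl} to conclude $\bar c_2=c_1$.
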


\begin{proof}
Let $\varphi = \varphi_+ +\varphi_-$ with $\varphi_+\in H^2$ and $\varphi_-\in  \overline{H^2}$. Put $\psi=P_\alpha \varphi_+$ and $\chi=P_\theta \bar\varphi_-$. Then
$\varphi-\psi-\bar\chi\in \alpha H^2+\overline{\theta  H^2}$. Hence $A_\varphi^{\theta,\alpha}=A_{\psi+\bar\chi}^{\theta,\alpha}$ by Theorem \ref{zero}.

The proof of the second statement is similar to the proof of the corollary on page 499 in \cite{Sarason}. Note firstly that by Proposition \ref{wl} $A_\varphi^{\theta,\alpha}=A_{\psi+\bar\chi}^{\theta,\alpha}=A_{\psi_1+\bar\chi_1}^{\theta,\alpha}$.
On the other hand, if $A_{\psi_1+\bar\chi_1}^{\theta,\alpha}=A_\varphi^{\theta,\alpha}$, then $\psi-\psi_1+\bar\chi-\bar\chi_1\in \alpha H^2+ \overline{\theta H^2}$. Hence $\psi-\psi_1+\bar\chi-\bar\chi_1=\alpha h_1 +\bar\theta \bar h_2$ for some $h_1, h_2\in H^2$. Applying $P_\alpha$ and taking into account that $P_\alpha 1=k_0^\alpha$, we get $\psi-\psi_1=-P_\alpha(\bar\chi-\bar\chi_1) +P_\alpha\bar\theta \bar h_2= -c_1P_\alpha 1=-c_1 k_0^\alpha$ for some constant $c_1$, since $P_\alpha \overline{H^2}$ contains only constant functions. Similarly, we also have that $\bar\psi-\bar\psi_1+\chi-\chi_1=\bar\alpha\bar h_1+\theta h_2$ and applying $P_\theta$ we obtain analogously that
 $\chi-\chi_1=c_2 k_0^\theta$. Since we must have $A_{\psi+\bar\chi}^{\theta,\alpha}-A_{\psi_1+\bar\chi_1}^{\theta,\alpha}=A^{\theta,\alpha}_{-c_1k^\alpha_0+\bar c_2\bar k_0^\theta}=0$, then by Proposition \ref{wl} $-c_1 P_\alpha+\bar c_2 P_\alpha=0$, which implies that $\bar c_2=c_1$.
\end{proof}

The following properties can be immediately obtained from the previous results by taking adjoints.

\begin{corollary}
Let $\aat_\varphi\colon K^2_\alpha\to K^2_\theta$, $\aat_\varphi\in\mathcal{T}(\alpha,\theta)$, $\alpha\leqslant\theta$, $\varphi\in L^2$. Then $\aat_\varphi=0$ iff $\varphi\in\theta H^2+\overline{\alpha H^2}$.
\end{corollary}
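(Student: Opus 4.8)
The plan is to deduce this directly from Theorem \ref{zero} by passing to adjoints, exactly as the phrase ``by taking adjoints'' suggests. The starting point is the elementary observation that a bounded operator vanishes if and only if its adjoint vanishes. Since $\aat_\varphi\in\mathcal{T}(\alpha,\theta)$ is bounded, Proposition \ref{l3} gives $(\aat_\varphi)^*=\ata_{\bar\varphi}$, and this adjoint is again bounded, now as an operator from $K^2_\theta$ to $K^2_\alpha$. Hence $\aat_\varphi=0$ if and only if $\ata_{\bar\varphi}=0$.

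Next I would apply Theorem \ref{zero} to $\ata_{\bar\varphi}$. The hypotheses match perfectly: we still have $\alpha\leqslant\theta$, the operator maps $K^2_\theta$ into $K^2_\alpha$, and it is bounded by the previous step, so its symbol $\bar\varphi\in L^2$ falls squarely under the scope of that theorem. It yields that $\ata_{\bar\varphi}=0$ if and only if $\bar\varphi\in\alpha H^2+\overline{\theta H^2}$.

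Finally I would translate this condition on $\bar\varphi$ into one on $\varphi$ by complex conjugation. Conjugating the set $\alpha H^2+\overline{\theta H^2}$ gives $\overline{\alpha H^2}+\theta H^2=\theta H^2+\overline{\alpha H^2}$, so $\bar\varphi\in\alpha H^2+\overline{\theta H^2}$ is equivalent to $\varphi\in\theta H^2+\overline{\alpha H^2}$. Chaining the three equivalences gives the claim. There is essentially no serious obstacle here; the only points requiring a moment's care are checking that the adjoint is bounded so that Theorem \ref{zero} indeed applies --- which is automatic since $\aat_\varphi$ is assumed to lie in $\mathcal{T}(\alpha,\theta)$ --- and keeping track of the conjugation of the symbol space, making sure that after conjugating, the inner factors $\alpha$ and $\theta$ end up on the correct analytic versus anti-analytic side.
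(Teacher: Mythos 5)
Your proposal is correct and is exactly the paper's argument: the paper derives this corollary from Theorem \ref{zero} ``by taking adjoints,'' i.e., using Proposition \ref{l3} to identify $(\aat_\varphi)^*=\ata_{\bar\varphi}$ and then conjugating the symbol condition. Your added care about boundedness of the adjoint and the placement of $\alpha$, $\theta$ under conjugation fills in precisely the routine details the paper leaves implicit.
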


\begin{corollary}
Let $\aat_\varphi\colon K^2_\alpha\to K^2_\theta$, $\aat_\varphi\in\mathcal{T}(\alpha,\theta)$, $\alpha\leqslant\theta$, $\varphi\in L^2$. Then there are functions $\psi\in K^2_\alpha$, $\chi\in K^2_\theta$ such that $\aat_\varphi=\aat_{\bar\psi+\chi}$.
\end{corollary}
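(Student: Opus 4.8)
The plan is to derive this corollary from Corollary \ref{csymbol} by passing to adjoints, exactly as the preceding remark suggests. The structural fact that makes this work is Proposition \ref{l3}, which says that conjugating the symbol corresponds to taking the Hilbert-space adjoint while interchanging the roles of $\alpha$ and $\theta$: $(\aat_\varphi)^* = \ata_{\bar\varphi}$, and symmetrically $(\ata_\eta)^* = \aat_{\bar\eta}$.

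First I would observe that since $\aat_\varphi \in \mathcal{T}(\alpha,\theta)$ is bounded, its adjoint is bounded as well. By Proposition \ref{l3} this adjoint is $\ata_{\bar\varphi}$, so $\ata_{\bar\varphi} \in \tcalta$. As $\bar\varphi \in L^2$ whenever $\varphi \in L^2$, Corollary \ref{csymbol} applies to $\ata_{\bar\varphi}$ and produces functions $\psi \in K^2_\alpha$ and $\chi \in K^2_\theta$ with
\[
\ata_{\bar\varphi} = \ata_{\psi + \bar\chi}.
\]

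Next I would take adjoints of both sides once more and invoke Proposition \ref{l3} again. The left-hand side gives $(\ata_{\bar\varphi})^* = \aat_\varphi$, while the right-hand side gives $(\ata_{\psi+\bar\chi})^* = \aat_{\overline{\psi + \bar\chi}} = \aat_{\bar\psi + \chi}$. Hence $\aat_\varphi = \aat_{\bar\psi+\chi}$ with $\psi \in K^2_\alpha$ and $\chi \in K^2_\theta$, which is precisely the assertion.

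There is essentially no genuine obstacle here: the entire argument is an instance of the adjoint duality of Proposition \ref{l3} combined with a single application of Corollary \ref{csymbol}. The only points requiring attention are bookkeeping ones—namely that $\bar\varphi$ remains in $L^2$, that boundedness is preserved under adjoints so the hypotheses of Corollary \ref{csymbol} are genuinely met, and that conjugation redistributes the analytic and co-analytic parts correctly via $\overline{\psi+\bar\chi} = \bar\psi+\chi$, so that the conjugated $\psi$ and the unconjugated $\chi$ indeed land in the spaces $K^2_\alpha$ and $K^2_\theta$ demanded by the statement.
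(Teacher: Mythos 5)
Your proof is correct and follows exactly the route the paper intends: the paper states this corollary as an immediate consequence of Corollary \ref{csymbol} "by taking adjoints," which is precisely your argument via the duality $(\ata_{\bar\varphi})^{*}=\aat_{\varphi}$ from Proposition \ref{l3}. The bookkeeping points you flag (boundedness of adjoints, $\bar\varphi\in L^2$, and $\overline{\psi+\bar\chi}=\bar\psi+\chi$) are exactly the implicit details the paper leaves to the reader.
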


\section{First characterization in terms of rank--two operators}

In \cite[Theorem 4.1]{Sarason}  a characterization of  truncated Toeplitz operators in $\mathcal{T}(\theta)$ was given by using certain rank two operators defined in terms of the kernel function $k_0^\theta$. Here we obtain an analogous result for asymmetric truncated Toeplitz operators $\mathcal{T}(\theta,\alpha)$ using  the kernel functions $k_0^\alpha$ and $k_0^\theta$.

\begin{theorem}\label{charakt}
Let $\alpha, \theta $ be  nonconstant inner functions such that $\alpha\leqslant\theta$ and let $A:K^2_\theta\to K^2_\alpha$ be a bounded operator. Then $A\in\tcalta$ if and only if there are $\psi\in K_\alpha^2$, $\chi\in K_\theta^2$ such that
\begin{equation}\label{chaar}
A-S_\alpha AS_\theta^*=\psi\otimes k_0^\theta+k_0^\alpha\otimes\chi.
\end{equation}
\end{theorem}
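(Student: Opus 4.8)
The plan is to treat both implications through the sesquilinear form of the defect $A-S_\alpha AS_\theta^*$ evaluated on the dense subspaces $K^\infty_\theta$ and $K^\infty_\alpha$. The tools I would use repeatedly are the formulas $S_\theta^*f=\bar z(f-f(0))$, $S_\alpha^*g=\bar z(g-g(0))$ from \eqref{C}, the reproducing identities $\langle h,k_0^\theta\rangle=h(0)$ and $\langle h,k_0^\alpha\rangle=h(0)$ from \eqref{k}, and the fact that, since $S_\theta^*=(S^*)|_{K^2_\theta}$ and $(S^*)^n h\to0$ for every $h\in H^2$, one has $S_\theta^{*n}f\to0$ and $S_\alpha^{*n}g\to0$ in norm.

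For the ``only if'' part I would first use Corollary \ref{csymbol} to write $A=\ata_{\psi+\bar\chi}$ with $\psi\in K^2_\alpha$, $\chi\in K^2_\theta$, and set $\varphi=\psi+\bar\chi$. For $f\in K^\infty_\theta$, $g\in K^\infty_\alpha$,
\[
\langle(A-S_\alpha AS_\theta^*)f,g\rangle=\langle\varphi f,g\rangle-\langle\varphi\,S_\theta^*f,S_\alpha^*g\rangle .
\]
Because $|z|=1$ on $\mathbb T$, the second term equals $\int\varphi(f-f(0))\overline{(g-g(0))}\,dm$, so after cancelling the common $\int\varphi f\bar g\,dm$ what remains is
\[
\overline{g(0)}\int\varphi f\,dm+f(0)\int\varphi\bar g\,dm-f(0)\overline{g(0)}\int\varphi\,dm .
\]
Evaluating the three scalar integrals by the mean value property and \eqref{k} gives $\int\varphi f\,dm=\psi(0)f(0)+\langle f,\chi\rangle$, $\int\varphi\bar g\,dm=\langle\psi,g\rangle+\overline{\chi(0)g(0)}$ and $\int\varphi\,dm=\psi(0)+\overline{\chi(0)}$; the purely constant contributions cancel and the form collapses to $f(0)\langle\psi,g\rangle+\overline{g(0)}\langle f,\chi\rangle=\langle(\psi\otimes k_0^\theta+k_0^\alpha\otimes\chi)f,g\rangle$. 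Density of $K^\infty_\theta$ and $K^\infty_\alpha$ then yields \eqref{chaar}.

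For the ``if'' part, assume $A-S_\alpha AS_\theta^*=R$ with $R=\psi\otimes k_0^\theta+k_0^\alpha\otimes\chi$. The engine is the telescoping identity
\[
\langle Af,g\rangle-\langle S_\alpha^{N+1}AS_\theta^{*(N+1)}f,g\rangle=\sum_{n=0}^{N}\langle R\,S_\theta^{*n}f,S_\alpha^{*n}g\rangle ,
\]
valid for $f\in K^2_\theta$, $g\in K^2_\alpha$. Since $A$ is bounded while $S_\theta^{*(N+1)}f\to0$ and $S_\alpha^{*(N+1)}g\to0$, the correction term on the left, which equals $\langle AS_\theta^{*(N+1)}f,S_\alpha^{*(N+1)}g\rangle$, tends to $0$, so $\langle Af,g\rangle=\sum_{n\ge0}\langle R\,S_\theta^{*n}f,S_\alpha^{*n}g\rangle$. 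Expanding $R$ and using $\langle S_\theta^{*n}f,k_0^\theta\rangle=\hat f(n)$, $\langle S_\alpha^{*n}g,k_0^\alpha\rangle=\hat g(n)$ together with $S_\alpha^n h=P_\alpha(z^n h)$, $S_\theta^n h=P_\theta(z^n h)$ (Proposition \ref{k-inf}, \cite[Theorem 9.2.2]{GMR}), the $N$-th partial sum equals $\langle\psi f_N,g\rangle+\langle f\bar\chi,g_N\rangle$, where $f_N,g_N$ are the $N$-th Taylor polynomials of $f,g$. Taking $f\in K^\infty_\theta$, $g\in K^\infty_\alpha$ makes $\psi\bar g$ and $f\bar\chi$ square-integrable, so these partial sums converge to $\langle\psi f,g\rangle+\langle\bar\chi f,g\rangle=\langle\ata_{\psi+\bar\chi}f,g\rangle$. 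Hence $Af=\ata_{\psi+\bar\chi}f$ on $K^\infty_\theta$, and since $A$ is bounded, $\ata_{\psi+\bar\chi}$ extends to $A$; thus $A\in\tcalta$.

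The crux lies in the asymmetry, and it shapes both halves. In Sarason's symmetric setting the co-analytic part of necessity is free: one takes the adjoint of the analytic computation. Here the adjoint of $S_\alpha\ata_{\bar\chi}S_\theta^*$ is $S_\theta\aat_\chi S_\alpha^*$, with the two shifts on opposite sides, and the intertwining $S_\theta\aat_\chi=\aat_\chi S_\alpha$ fails in general (cf.\ the Example following Proposition \ref{p1}); this is why I would compute the full form directly, handling the analytic and co-analytic terms simultaneously rather than transposing. In the sufficiency direction the delicate point is that the natural candidate $\ata_{\psi+\bar\chi}$ is not known to be bounded in advance, so one cannot form $A-\ata_{\psi+\bar\chi}$ and iterate at the level of operators; performing the telescoping with the bounded operator $A$ and reading off $\langle Af,g\rangle$ as a convergent numerical series is the step that sidesteps this difficulty, and its convergence rests squarely on $(S^*)^n\to0$.
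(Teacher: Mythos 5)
Your proof is correct, but it takes a genuinely different route from the paper's, most visibly in the necessity direction. The paper proves necessity by operator identities: it splits $A=\ata_{\psi}+\ata_{\bar\chi}$, uses the intertwining $S_\alpha \ata_\psi=\ata_\psi S_\theta$ of Proposition \ref{p1} together with the defect identity $I_{K^2_\theta}-S_\theta S^*_\theta=k_0^\theta\otimes k_0^\theta$ for the analytic part, and for the co-analytic part factors $\ata_{\bar\chi}=P_\alpha A^\theta_{\bar\chi}$, invoking Sarason's commutation of $A^\theta_{\bar\chi}$ with $S^*_\theta$ and the relation $P_\alpha S_\theta=S_\alpha P_\alpha$ from Proposition \ref{k-inf}; your version instead evaluates the sesquilinear form of $A-S_\alpha AS_\theta^*$ directly on $\mathbb{T}$, using only $|z|=1$, the mean value property, and \eqref{k}, which is more elementary, treats both parts of the symbol in one computation, and correctly diagnoses why Sarason's transposition trick is unavailable when $\alpha\neq\theta$ (the adjoint puts the shifts on the wrong sides, and the Example after Proposition \ref{p1} shows the needed intertwining fails). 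In the sufficiency direction the two arguments share the same engine, telescoping \eqref{chaar} and using $(S^*)^N\to 0$ strongly, but they are organized differently: the paper first establishes the series formula \eqref{eq21} for the possibly unbounded $\ata_{\psi+\bar\chi}$, whose correction term $\langle \ata_{\psi+\bar\chi}(S^*_\theta)^{N+1}f,(S^*_\alpha)^{N+1}g\rangle\to 0$ requires the delicate adjoint manipulations displayed there, and then compares with the iterated form \eqref{eq22} of $A$; you telescope only the bounded operator $A$ and identify the resulting numerical series directly through the Taylor partial sums $f_N$, $g_N$, so the only convergence input is the trivial bound $\|A\|\,\|(S^*)^{N+1}f\|\,\|g\|\to0$ plus $L^2$-convergence of $f_N$, $g_N$ against the fixed functions $\psi\bar g$, $f\bar\chi$ (this is where $f\in K^\infty_\theta$, $g\in K^\infty_\alpha$ is genuinely used). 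What the paper's route buys is structural information reused later: it exhibits $\psi=\ata_\psi k_0^\theta$ and $\chi$ as images of kernel functions, which feeds into Theorem \ref{char-sp} and the inverse problem of Section 8, whereas your route buys self-containedness, bypassing Proposition \ref{p1}, Proposition \ref{k-inf}, and the Sarason commutation results entirely.
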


\begin{proof}
Assume that $A\in\tcalta$. Then $A=A_{\psi+\bar\chi}^{\theta,\alpha}$ for some $\psi\in K_\alpha^2$, $\chi\in K_\theta^2$. Note that if $f\in K^\infty_\theta$,  then   $S^*_\theta f\in K^\infty_\theta$ (cf. \eqref{C}.
 Hence
by Proposition \ref{p1}
\begin{multline}\label{eq11}
(A_\psi^{\theta, \alpha}-S_\alpha\ata_\psi S^*_\theta)f=(\ata_\psi-\ata_\psi S_\theta S^*_\theta)f\\=(\ata_\psi(I_{K^2_\theta}-S_\theta S^*_\theta))f=(\ata_\psi k_0^\theta\otimes k_0^\theta)f\\
=(P_\alpha(\psi(1-\overline{\theta(0)}\theta))\otimes k_0^\theta)f=(\psi\otimes k_0^\theta)f.
\end{multline}
Also since $A^\theta_{\bar\chi}$ commutes with $S^*_\theta$ on $K^\infty_\theta$ (\cite[p. 498]{Sarason}) and by Proposition \ref{k-inf}  we have
\begin{align}\label{eq12}
(\ata_{\bar\chi}-&S_\alpha\ata_{\bar\chi}S^*_\theta)f=(\ata_{\bar\chi}-S_\alpha P_\alpha A^\theta_{\bar\chi}S^*_\theta)f\notag
\\&=(\ata_{\bar\chi}-S_\alpha P_\alpha S^*_\theta A^\theta_{\bar\chi})f=(P_\alpha A^\theta_{\bar\chi}-P_\alpha S_\theta S^*_\theta A^\theta_{\bar\chi})f\\&=P_\alpha(I_{K^2_\theta}-S_\theta S_\theta^*)A^\theta_{\bar\chi}f=P_\alpha(k_0^\theta\otimes k_0^\theta)A^\theta_{\bar\chi}f\notag\\&=((P_\alpha k_0^\theta)\otimes(A^\theta_{\chi}k_0^\theta))f=(k_0^\alpha\otimes\chi)f.\notag
\end{align}
Since $K^\infty_\theta$ is dense in $K^2_\theta$, we obtain \eqref{chaar}.

For the converse implication note that
for $\psi\in K^2_\alpha$, $\chi\in K^2_\theta$ and for all $f\in K^\infty_\theta$, $g\in K^\infty_\alpha$, we have
\begin{equation}\label{eq21}
\langle \ata_{\psi+\bar\chi}f,g\rangle =\sum_{n=0}^\infty(\langle f,S_\theta^n k_0^\theta\rangle\langle S_\alpha^n\psi,g\rangle+\langle f,S^n_\theta \chi\rangle\langle S_\alpha^n k_0^\alpha,g\rangle).
\end{equation}
Indeed, we obtain \eqref{eq21} as in the proof of \cite[Lemma 4.2]{Sarason} taking $\psi\in K^2_\alpha$ and $\chi\in K^2_\theta$ instead of $\psi,\chi\in K^2_u$  and  $S_\alpha$, $S^*_\theta$ instead of $S_u$, $S^*_u$, respectively.
The convergence $\langle A^{\theta,\alpha}_{\psi+\bar\chi}(S^*_\theta)^{N+1} f, {S^*_\alpha}^{N+1}g\rangle \to 0$ as $N\to \infty$  follows from
\begin{align*}\langle S_\alpha^N A^{\theta,\alpha}_{\psi+\bar\chi}(S^*_\theta)^N f,g\rangle&=\langle S_\alpha^N A^{\theta,\alpha}_{\psi}(S^*_\theta)^N f,g\rangle+\langle S_\alpha^N A^{\theta,\alpha}_{\bar\chi}(S^*_\theta)^N f,g\rangle \\
&=\langle (S^*_\theta)^N f, A^{\alpha,\theta}_{\bar\psi}(S_\alpha^*)^N g\rangle+\langle S_\alpha^N P_\alpha A^{\theta}_{\bar\chi}(S^*_\theta)^N f,g\rangle
\\&=\langle (S^*_\theta)^N f,(S^*_\theta)^N A^{\alpha,\theta}_{\bar\psi} g \rangle+
\langle (S^*_\theta)^N A^{\theta}_{\bar\chi} f,P_\alpha(S^*_\alpha)^N g \rangle \\&=\langle (S^*_\theta)^N f,(S^*_\theta)^N A^{\alpha,\theta}_{\bar\psi} g \rangle+
\langle (S^*_\theta)^N A^{\theta}_{\bar\chi} f,(S^*_\alpha)^N g \rangle,
\end{align*}
where the last expression tends to zero, when $N\to \infty$, by the strong convergence $(S^*)^N\to 0$.

Now assume that a bounded operator $A\colon K^2_\theta\to K^2_\alpha$ satisfies \eqref{chaar}. Following the proof of Theorem 4.1 \cite{Sarason}  we obtain
 \begin{equation}\label{eq22}A=\sum_{n=0}^\infty(S_\alpha^n\psi\otimes S_\theta^n k_0^\theta+S_\alpha^n k_0^\alpha\otimes S^n_\theta \chi),
 \end{equation} taking into account that  ${S^*_\theta}^n\to 0$ in the strong operator topology.
 Hence, comparing \eqref{eq21} and \eqref{eq22}, we conclude that the right hand side of \eqref{eq22} is equal to $\ata_{\psi+\bar\chi}$.
\end{proof}

We can obtain a similar characterization for operators from $\mathcal{T}(\alpha,\theta)$ by taking adjoints in \eqref{chaar}. Namely, we have:

\begin{corollary}
Let $\alpha, \theta$ be nonconstant inner functions such that $\alpha\leqslant\theta$ and let $A\colon K^2_\alpha\to K^2_\theta$ be a bounded operator. Then
$A\in\mathcal{T}(\alpha,\theta)$ if and only if there are $\psi\in K^2_\alpha$, $\chi\in K^2_\theta$ such that
\begin{equation}
A-S_\theta AS^*_\alpha=k_0^\theta\otimes\psi +\chi\otimes k_0^\alpha.
\end{equation}
\end{corollary}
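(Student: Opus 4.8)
The plan is to deduce this characterization directly from Theorem \ref{charakt} by passing to adjoints, exploiting the duality $\big(\aat_\varphi\big)^* = \ata_{\bar\varphi}$ established in Proposition \ref{l3}. The starting observation is that a bounded operator $A\colon K^2_\alpha\to K^2_\theta$ belongs to $\mathcal{T}(\alpha,\theta)$ if and only if its adjoint $A^*\colon K^2_\theta\to K^2_\alpha$ belongs to $\tcalta$; this is immediate from Proposition \ref{l3}, since $A = \aat_\varphi$ for some $\varphi\in L^2$ precisely when $A^* = \ata_{\bar\varphi}$.

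Next I would apply Theorem \ref{charakt} to the operator $A^*\in\tcalta$: there exist $\psi\in K^2_\alpha$ and $\chi\in K^2_\theta$ such that
\[
A^* - S_\alpha A^* S_\theta^* = \psi\otimes k_0^\theta + k_0^\alpha\otimes\chi.
\]
The final step is to take the Hilbert-space adjoint of both sides of this identity. On the left, using $(BC)^* = C^* B^*$ together with $(S_\theta^*)^* = S_\theta$, one gets $(S_\alpha A^* S_\theta^*)^* = S_\theta A S_\alpha^*$, so the left-hand side becomes $A - S_\theta A S_\alpha^*$. On the right, one uses the elementary rule $(x\otimes y)^* = y\otimes x$ for rank-one operators, which sends $\psi\otimes k_0^\theta$ to $k_0^\theta\otimes\psi$ and $k_0^\alpha\otimes\chi$ to $\chi\otimes k_0^\alpha$. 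This yields exactly
\[
A - S_\theta A S_\alpha^* = k_0^\theta\otimes\psi + \chi\otimes k_0^\alpha,
\]
as claimed; the converse direction is handled symmetrically, since every step above is reversible.

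There is essentially no obstacle here: the argument is purely formal, and the only point requiring care is the bookkeeping of adjoints---reversing the order of the operator factors in $S_\alpha A^* S_\theta^*$ and correctly swapping the two legs of each tensor factor. Since both $\psi\otimes k_0^\theta$ and $k_0^\alpha\otimes\chi$ are genuinely rank-one and the identity $(x\otimes y)^* = y\otimes x$ holds verbatim, no boundedness or density issues arise beyond those already settled in the proof of Theorem \ref{charakt}.
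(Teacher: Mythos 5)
Your proof is correct and is exactly the paper's approach: the paper obtains this corollary "by taking adjoints in \eqref{chaar}," i.e., applying Theorem \ref{charakt} to $A^*\in\tcalta$ (using $(\aat_\varphi)^*=\ata_{\bar\varphi}$ from Proposition \ref{l3}) and then dualizing the identity via $(x\otimes y)^*=y\otimes x$. Your bookkeeping of the adjoints and the tensor legs is accurate, so nothing is missing.
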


\section{Second characterization in terms of rank--two operators}

Sarason obtained also a characterization for truncated Toeplitz operators belonging to $\mathcal{T}(\theta)$ involving the function $\tilde k_0^\theta=C_\theta k_0^\theta$ instead of $ k_0^\theta$, by a simple application of the conjugation $C_\theta$ to the result of Theorem \ref{charakt} in the case $\alpha=\theta$. Here we will show that an analogous result holds for operators belonging to $\mathcal{T}(\theta,\alpha)$, $\alpha\leqslant \theta$. However, we cannot use the same reasoning for $\alpha\neq \theta$, and, as we will see, in the case of asymmetric truncated Toeplitz operators the situation is  more complex and, though a similar characterization is obtained, the proof is significantly more involved.

\begin{theorem}\label{char-sp}
Let $\alpha, \theta$ be nonconstant inner functions such that $\alpha \leqslant \theta$, and let $A\colon K^2_\theta\to K^2_\alpha$ be a bounded operator. Then $A\in\mathcal{T}(\theta,\alpha)$ if and only if there are $\mu\in K^2_\alpha$ and $\nu\in K^2_\theta$ such that
\begin{equation}\label{cstar}A-S_\alpha^* A S_\theta=\mu\otimes \tilde{k}_0^\theta+\tilde{k}_0^\alpha\otimes \nu.\end{equation}
\end{theorem}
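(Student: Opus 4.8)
The plan is to deduce \eqref{cstar} from the first characterization (Theorem~\ref{charakt}) by transporting that identity through the two conjugations $C_\alpha$ and $C_\theta$, rather than by computing $A-S_\alpha^*AS_\theta$ directly. Three algebraic facts drive this. First, both conjugations intertwine the compressed shift with its adjoint, $C_\alpha S_\alpha C_\alpha=S_\alpha^*$ and $C_\theta S_\theta C_\theta=S_\theta^*$, each of which drops out of \eqref{B}, \eqref{C} and \eqref{k}. Second, a conjugation sends a rank-one operator to a rank-one operator, $C_\alpha(x\otimes y)C_\theta=(C_\alpha x)\otimes(C_\theta y)$ for $x\in K^2_\alpha$, $y\in K^2_\theta$, which follows from $\langle Cz,w\rangle=\langle Cw,z\rangle$ valid for any conjugation $C$. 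Third, $C_\theta k_0^\theta=\tilde k_0^\theta$ and $C_\alpha k_0^\alpha=\tilde k_0^\alpha$ by definition. Granting these, the operation $X\mapsto C_\alpha X C_\theta$ converts any identity of the shape $X-S_\alpha X S_\theta^*=\psi\otimes k_0^\theta+k_0^\alpha\otimes\chi$ into $C_\alpha X C_\theta-S_\alpha^*(C_\alpha X C_\theta)S_\theta=(C_\alpha\psi)\otimes\tilde k_0^\theta+\tilde k_0^\alpha\otimes(C_\theta\chi)$, which is precisely the shape of \eqref{cstar}.

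The single new ingredient, and the step I expect to be the main obstacle, is an analog of \emph{complex symmetry}: the map $A\mapsto C_\alpha A C_\theta$ must carry $\tcalta$ bijectively onto itself. I would prove this by computing, for $f\in K^\infty_\theta$ and $g\in K^\infty_\alpha$,
\[
\langle C_\alpha \ata_\varphi C_\theta f, g\rangle = \langle C_\alpha g, \ata_\varphi C_\theta f\rangle = \int \alpha\bar\theta\,\bar\varphi\, f\,\bar g\,dm = \langle \ata_{\alpha\bar\theta\bar\varphi} f, g\rangle,
\]
where the middle equality uses $C_\alpha g=\alpha\bar z\bar g$ and $\overline{C_\theta f}=\bar\theta z f$. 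Thus $C_\alpha\ata_\varphi C_\theta=\ata_{\alpha\bar\theta\bar\varphi}$ on the dense set $K^\infty_\theta$; since $\alpha\bar\theta$ is unimodular the symbol $\alpha\bar\theta\bar\varphi$ again lies in $L^2$, and since $C_\alpha,C_\theta$ are antilinear isometries the operator $C_\alpha\ata_\varphi C_\theta$ is bounded whenever $\ata_\varphi$ is. Hence $\ata_{\alpha\bar\theta\bar\varphi}\in\tcalta$, so $A\mapsto C_\alpha A C_\theta$ maps $\tcalta$ into itself; being an involution, it is a bijection of $\tcalta$. This replaces Sarason's clean relation (complex symmetry in the case $\alpha=\theta$) by one twisted by the unimodular factor $\alpha\bar\theta$.

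With this lemma the theorem follows symmetrically in both directions. For necessity, if $A\in\tcalta$ then $B:=C_\alpha A C_\theta\in\tcalta$, so Theorem~\ref{charakt} supplies $\psi\in K^2_\alpha$, $\chi\in K^2_\theta$ with $B-S_\alpha B S_\theta^*=\psi\otimes k_0^\theta+k_0^\alpha\otimes\chi$; conjugating back by $C_\alpha(\cdot)C_\theta$ and using $C_\alpha B C_\theta=A$ yields \eqref{cstar} with $\mu=C_\alpha\psi\in K^2_\alpha$ and $\nu=C_\theta\chi\in K^2_\theta$. For sufficiency, if $A$ satisfies \eqref{cstar}, conjugating \eqref{cstar} shows that $B=C_\alpha A C_\theta$ satisfies the first-characterization identity with $C_\alpha\mu\in K^2_\alpha$ and $C_\theta\nu\in K^2_\theta$ in place of $\psi,\chi$; Theorem~\ref{charakt} gives $B\in\tcalta$, and the lemma returns $A=C_\alpha B C_\theta\in\tcalta$.

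It is worth recording why I avoid the direct route that mirrors the proof of Theorem~\ref{charakt}. Writing $A=\ata_\psi+\ata_{\bar\chi}$ by Corollary~\ref{csymbol}, the analytic part is harmless: Proposition~\ref{p1} gives $S_\alpha\ata_\psi=\ata_\psi S_\theta$ on $K^\infty_\theta$, whence $\ata_\psi-S_\alpha^*\ata_\psi S_\theta=(I_{K^2_\alpha}-S_\alpha^*S_\alpha)\ata_\psi=\tilde k_0^\alpha\otimes(\aat_{\bar\psi}\tilde k_0^\alpha)=\tilde k_0^\alpha\otimes C_\alpha\psi$, using the defect formula and Proposition~\ref{kzero}(7). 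The co-analytic part is the obstruction: in the symmetric setting $\ata_{\bar\chi}$ commutes with $S_\theta^*$, but here the corresponding intertwining $S_\alpha^*\ata_{\bar\chi}=\ata_{\bar\chi}S_\theta^*$ fails, since its adjoint is exactly the relation $S_\theta\aat_\chi=\aat_\chi S_\alpha$ shown to be false in the Example following Proposition~\ref{p1}. A naive computation of $\ata_{\bar\chi}-S_\alpha^*\ata_{\bar\chi}S_\theta$ therefore leaves spurious terms that would have to be shown to collapse into rank two; the conjugation argument bypasses this entirely by reducing to the already-established Theorem~\ref{charakt}.
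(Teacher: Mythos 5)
Your proof is correct, and it takes a genuinely different route from the paper -- in fact the paper explicitly asserts, in the lead-in to this theorem, that Sarason's conjugation argument ``cannot'' be used when $\alpha\neq\theta$, and your twisted involution shows that assertion is too pessimistic. The paper's proof is computational: for necessity it starts from the identity of Theorem~\ref{charakt}, sandwiches it as $S_\alpha^*(A-S_\alpha AS_\theta^*)S_\theta$, and uses the defect operators together with the formulas of Proposition~\ref{kzero} to collapse a five-term rank expression into the two-term form, arriving at the explicit formulas \eqref{symbol}, $\mu=C_\alpha\chi_\alpha$ and $\nu=C_\alpha\psi+S^*_\theta(\alpha\chi_{\frac{\theta}{\alpha}})$; for sufficiency it sandwiches \eqref{cstar} by $S_\alpha(\cdot)S_\theta^*$ and reduces back to Theorem~\ref{charakt}. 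You instead prove the key lemma $C_\alpha \ata_\varphi C_\theta=\ata_{\alpha\bar\theta\bar\varphi}$, which makes $A\mapsto C_\alpha AC_\theta$ an involution of $\tcalta$ (what fails for $\alpha\neq\theta$ is only the complex-symmetry relation $C_\theta A C_\theta=A^*$, since the adjoint lives in $\mathcal{T}(\alpha,\theta)$; your unimodular twist of the symbol repairs exactly this), and then both directions become formal transports of Theorem~\ref{charakt} through $C_\alpha(\cdot)C_\theta$, using $C_\alpha S_\alpha C_\alpha=S^*_\alpha$, $C_\theta S^*_\theta C_\theta=S_\theta$ and $C_\alpha(x\otimes y)C_\theta=(C_\alpha x)\otimes(C_\theta y)$. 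One small point worth tightening: to conclude that $\ata_{\alpha\bar\theta\bar\varphi}$ is bounded you should either invoke closedness of the unbounded operator (agreement with the bounded operator $C_\alpha\ata_\varphi C_\theta$ on the dense set $K^\infty_\theta$ then propagates to the whole domain), or note that $C_\theta$ maps $\mathcal{D}(\ata_{\alpha\bar\theta\bar\varphi})$ onto $\mathcal{D}(\ata_\varphi)$ because $|\alpha\bar\theta\bar\varphi f|=|\varphi\, C_\theta f|$ pointwise, so the identity holds on the full domain. What the paper's longer computation buys, and your argument does not, is the explicit link \eqref{symbol} between $(\mu,\nu)$ and the symbol $\psi+\bar\chi$ of $A$ itself: this link is reused in Corollary~\ref{wn3}, Theorem~\ref{pr2} and the inverse-problem results of Section 8, whereas your construction expresses $\mu,\nu$ through a symbol of the twisted operator $C_\alpha AC_\theta$, so recovering \eqref{symbol} would require the additional bookkeeping of decomposing $\alpha\bar\theta\bar\varphi$ relative to $\varphi$. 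What your argument buys is brevity, a conceptual explanation of where the factor $\alpha\bar\theta$ enters, and a structural fact of independent interest about $\tcalta$.
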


\begin{proof} Assume firstly that $A\in \mathcal{T}(\theta,\alpha)$. By Theorem \ref{charakt}, there are $\psi\in K^2_\alpha$ and $\chi\in K^2_\theta$ such that
$$A-S_{\alpha}AS^*_{\theta}=\psi\otimes k_0^\theta +k_0^\alpha\otimes \chi.$$
Moreover, $A=A_{\psi+\bar\chi}^{\theta,\alpha}$. Hence
$$S^*_{\alpha}(A-S_{\alpha}AS^*_{\theta})S_{\theta}= S^*_{\alpha}\psi \otimes S^*_{\theta}k_{0}^{\theta}+S^*_{\alpha}k_{0}^{\alpha} \otimes S^*_{\theta}\chi.$$
 Using $I_{K^2_\alpha}-S^*_\alpha S_\alpha=\tilde{k}_0^\alpha\otimes\tilde{k}_0^\alpha$ and $I_{K^2_\theta}-S^*_\theta S_\theta=\tilde{k}_0^\theta\otimes \tilde{k}_0^\theta$ after some calculations we get
\begin{multline}\label{star}
A-S^*_{\alpha}AS_{\theta}=\tilde{k}_0^{\alpha} \otimes A^*\tilde{k}_0^{\alpha} + A \tilde{k}_0^{\theta} \otimes \tilde{k}_0^{\theta} - \langle A \tilde{k}_0^{\theta}, \tilde{k}_0^{\alpha}\rangle \tilde{k}_0^{\alpha} \otimes \tilde{k}_0^{\theta}\\ - S^*_{\alpha}\psi \otimes S^*_{\theta}k_0^{\theta}- S^*_{\alpha}k_0^{\alpha} \otimes S^*_{\theta}\chi.
\end{multline}
Since $A=A_{\psi+\bar\chi}^{\theta,\alpha}$ with $\psi\in K^2_\alpha,\,\chi\in K^2_\theta$,
 by Proposition \ref{kzero} (7)-(8), and taking into account that $\chi=\chi_{\frac{\theta}{\alpha}}+\frac{\theta}{\alpha}\chi_\alpha$, where $\chi_{\frac{\theta}{\alpha}}\in K^2_{\frac{\theta}{\alpha}}$ and $\chi_\alpha\in K^2_\alpha$,
\begin{multline}
\tilde{k}_0^{\alpha} \otimes A^{\alpha,\theta}_{\bar\psi+\chi}\tilde{k}_0^{\alpha}= \tilde{k}_0^{\alpha} \otimes C_{\alpha} \psi + \tilde{k}_0^{\alpha} \otimes P_{\theta} C_{\theta} (\bar{\chi}\tfrac{\theta}{\alpha}) - \overline{\alpha(0)} \tilde{k}_0^{\alpha} \otimes S^*_{\theta}\chi\\=\tilde{k}_0^{\alpha} \otimes C_{\alpha} \psi + \tilde{k}_0^{\alpha} \otimes S^* (\alpha\chi_{\frac{\theta}{\alpha}})+\langle k_0^\alpha,\chi_\alpha\rangle \tilde k_0^\alpha\otimes \tilde k_0^\theta - \overline{\alpha(0)} \tilde{k}_0^{\alpha} \otimes S^*_{\theta}\chi.\end{multline}
 On the other hand, by Proposition  \ref{kzero} (3) and (4),
 $$A^{\theta,\alpha}_{\psi+\bar\chi} \tilde{k}_0^{\theta} \otimes \tilde{k}_0^{\theta}=\tfrac{\theta}{\alpha}(0)\psi(0)\tilde{k}_0^{\alpha} \otimes \tilde{k}_0^{\theta} - \theta(0) S^*_{\alpha}\psi\otimes \tilde{k}_0^{\theta} + C_{\alpha}P_{\alpha} (\chi\tfrac{\bar{\theta}}{\bar{\alpha}}) \otimes \tilde{k}_0^{\theta}.$$
 Since $\psi \in K^2_\alpha$, we have $\alpha \bar\psi\in H_0^2$ and thus $(\alpha\bar{\psi})(0)=0$. Hence, since $C_\alpha S^*_\alpha=S_\alpha C_\alpha$  (\cite[Lemma 2.1]{Sarason}), we get
      \begin{multline*}
      \langle S^*_{\alpha}\psi, \tilde{k}_0^{\alpha} \rangle = \langle k_0^{\alpha}, C_{\alpha} S^*_{\alpha}\psi \rangle=\langle  k_0^{\alpha}, S_{\alpha} C_{\alpha}\psi \rangle=\langle  k_0^{\alpha}, S_{\alpha} \alpha\bar{z}\bar{\psi} \rangle\\=\langle  k_0^{\alpha}, \alpha \bar{\psi}\rangle=\langle 1-\overline{\alpha(0)} {\alpha}, \alpha \bar{\psi}\rangle=\langle 1,\alpha\bar\psi\rangle-\overline{\alpha(0)}\langle 1, \bar{\psi}\rangle=
      -\overline{\alpha(0)}\psi(0).
      \end{multline*}
      Therefore by Proposition \ref{kzero} (3) and (4) we have
      \begin{align*}
      \langle A_{\psi+\bar{\chi}}^{\theta,\alpha} \tilde{k}_0^{\theta}, \tilde{k}_0^{\alpha} \rangle &= \langle \tfrac{\theta}{\alpha}(0) \psi(0)\tilde{k}_0^{\alpha}-\theta(0) S_\alpha^*\psi +C_{\alpha}P_{\alpha} (\tfrac{\bar{\theta}}{\bar{\alpha}}\chi),\tilde{k}_0^{\alpha}
       \rangle  \\ &= \tfrac{\theta}{\alpha}(0) \psi(0) \|\tilde{k}_0^{\alpha}\|^2 +\theta(0)\overline{\alpha(0)}{\psi(0)}+  \langle C_{\alpha} P_{\alpha}(\tfrac{\bar{\theta}}{\bar{\alpha}}\chi), \tilde{k}_0^{\alpha} \rangle \\ &= \tfrac{\theta}{\alpha}(0) \psi(0) (1-|\alpha(0)|^2) +\theta(0)\overline{\alpha(0)}{\psi(0)}+\langle C_\alpha\chi_\alpha,\tilde k_0^{\alpha}\rangle \\ &=\tfrac{\theta}{\alpha}(0) \psi(0)+\langle k_0^{\alpha},\chi_\alpha\rangle.
      \end{align*}

   Moreover, by \eqref{D} we get  $S^*_{\alpha}k_0^{\alpha} \otimes S^*_{\theta}\chi=-\overline{\alpha(0)}\tilde{k}_0^{\alpha} \otimes S^*_{\theta}\chi$
 and $S^*_{\alpha}\psi \otimes S^*_{\theta}k_0^{\theta}=-\theta(0)  S^*_{\alpha}\psi \otimes \tilde{k}_0^{\theta}$.
Hence the right hand side of equation \eqref{star}, after  simplification,  is equal to
$$
\tilde{k}_0^{\alpha} \otimes C_{\alpha} \psi + \tilde{k}_0^{\alpha} \otimes S^* (\alpha\chi_{\frac{\theta}{\alpha}})
+ C_{\alpha}P_{\alpha} (\chi\tfrac{\bar{\theta}}{\bar{\alpha}}) \otimes \tilde{k}_0^{\theta},$$
which can be written as $$\tilde{k}_0^{\alpha} \otimes (C_{\alpha} \psi+S^* (\alpha\, \chi_{\frac{\theta}{\alpha}}))+C_{\alpha}\,\chi_{\alpha} \otimes \tilde{k}_0^{\theta}. $$
Note that $C_{\alpha}\,\chi_{\alpha}\in K^2_\alpha$, $C_{\alpha} \psi\in K^2_\alpha\subset K^2_\theta$. Moreover, $\alpha\chi_{\frac{\theta}{\alpha}}\in K^2_\theta$, which implies that $S^* (\alpha\chi_{\frac{\theta}{\alpha}})=S_\theta^* (\alpha\chi_{\frac{\theta}{\alpha}})\in  K^2_\theta$.
Putting
\begin{equation}\label{symbol}
\mu=
C_\alpha \,\chi_\alpha\in K^2_\alpha\text{ and }\nu=
C_{\alpha} \psi+S^*_\theta (\alpha \chi_{\frac{\theta}{\alpha}})\in K^2_\theta
\end{equation}
 we obtain
\begin{equation}\label{opa}A-S_\alpha^* A S_\theta=\mu\otimes \tilde{k}_0^\theta+\tilde{k}_0^\alpha\otimes \nu.\end{equation}

For the reverse implication assume that a bounded operator $A$ satisfies the equality \eqref{cstar} for $\mu\in K^2_\alpha$, $\nu\in K^2_\theta$.
Then.
\begin{equation}
S_\alpha AS^*_\theta-S_\alpha S^*_\alpha AS_\theta S^*_\theta=S_\alpha\mu\otimes S_\theta \tilde k_0^\theta+S_\alpha\tilde k_0^\alpha\otimes S_\theta\nu
\end{equation}
Using $I_{K^2_\alpha}-S_{\alpha}S^*_{\alpha}=k^{\alpha}_0 \otimes k^{\alpha}_0$, $I_{K^2_\theta}-S_{\theta}S^*_{\theta}=k^{\theta}_0 \otimes k^{\theta}_0$ and  \eqref{D} we obtain
\begin{multline*}
A-S_\alpha AS^*_\theta=
Ak_0^\theta\otimes k_0^\theta+k_0^\alpha\otimes A^*k_0^\alpha-\langle Ak_0^\theta,k_0^\alpha\rangle k_0^\alpha\otimes k_0^\theta\\+\overline{\theta(0)}S_\alpha\mu\otimes k_0^\theta+\alpha(0)k_0^\alpha\otimes S_\theta\nu.
\end{multline*}
Hence
\begin{equation*}
A-S_\alpha AS^*_\theta=(Ak_0^\theta-\langle Ak_0^\theta,k_0^\alpha\rangle k_0^\alpha+\overline{\theta(0)}S_\alpha\mu)\otimes k_0^\theta+k_0^\alpha\otimes(A^*k_0^\alpha+\overline{\alpha(0)}S_\theta\nu),
\end{equation*}
which by Theorem \ref{charakt} implies that $A\in\mathcal{T}(\theta,\alpha)$.
\end{proof}

By taking adjoints in \eqref{cstar} we obtain a similar
characterization for operators from $\mathcal{T}(\alpha,\theta)$:

\begin{corollary}
Let $\alpha, \theta$ be nonconstant inner functions such that $\alpha\leqslant\theta$, and let $A\colon K^2_\alpha\to K^2_\theta$ be a bounded operator. Then
$A\in\mathcal{T}(\alpha,\theta)$ if and only if there are $\mu\in K^2_\alpha$, $\nu\in K^2_\theta$ such that
\begin{equation}
A-S_\theta^* AS_\alpha=\tilde k_0^\theta\otimes\mu +\nu\otimes \tilde k_0^\alpha.
\end{equation}
\end{corollary}

It is clear that if an asymmetric truncated Toeplitz operator $A$ satisfies equation \eqref{cstar} with some
$\mu$, $\nu$, then that equation is also satisfied if  $\mu$, $\nu$ are replaced by
\begin{equation}\label{6.2a}
\mu^\prime=\mu+\bar b\,\tilde k_0^\alpha\,, \,\nu^\prime=\nu- b\,\tilde k_0^\theta,
\end{equation}
 respectively, for any $b\in \mathbb{C}$. On the other hand, it is also true that the symbol of $A=A_{\psi+\bar\chi}^{\theta,\alpha}\in\mathcal{T}(\theta,\alpha)$ is not unique, and by
Corollary \ref{csymbol} we can replace $\psi\in K^2_\alpha$ and $\chi\in K^2_\theta$ by
\begin{equation}\label{6.2b}
\psi^\prime=\psi+c\, k_0^\alpha\in K^2_\alpha\,,\, \chi^\prime=\chi-\bar c\, k_0^\theta\in K^2_\theta,
\end{equation}
  respectively, for any $c\in \mathbb{C}$.
Using \eqref{symbol}, it is easy to see that the following relation between the freedom of choice of $\mu$, $\nu$ on the one hand, and $\psi$, $\chi$ on the other, holds.

\begin{corollary}\label{wn3}
Let $\mu\in K^2_\alpha$ and $\nu\in K^2_\theta$ be defined by \eqref{symbol} for given $\psi\in K^2_\alpha$ and $\chi\in K^2_\theta$, and let $\mu^\prime\in K^2_\alpha$ and $\nu^\prime\in K^2_\theta$ be defined analogously for $\psi^\prime\in K^2_\alpha$ and $\chi^\prime\in K^2_\theta$. If \eqref{6.2b} holds, then
\[\mu^\prime=\mu-c\ \tfrac\theta\alpha(0)\tilde k_0^\alpha\,\,,\,\,\nu^\prime=\nu+
\bar c\ \overline{\tfrac\theta\alpha(0)}\tilde k_0^\theta.\]
 \end{corollary}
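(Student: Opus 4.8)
The plan is to verify Corollary~\ref{wn3} by direct substitution into the defining formulas \eqref{symbol}, exploiting their linearity in the pair $(\psi,\chi)$. Since both $\mu$ and $\nu$ are linear in $\psi$ and $\chi$, it suffices to compute the change induced by the perturbations $\psi\mapsto\psi+c\,k_0^\alpha$ and $\chi\mapsto\chi-\bar c\,k_0^\theta$ separately and add the contributions. Concretely, I would write $\mu^\prime=C_\alpha\,\chi_\alpha^\prime$ and $\nu^\prime=C_\alpha\psi^\prime+S_\theta^*(\alpha\,\chi_{\frac\theta\alpha}^\prime)$, and then track how the decomposition $\chi=\chi_{\frac\theta\alpha}+\frac\theta\alpha\chi_\alpha$ transforms when $\chi$ is replaced by $\chi^\prime=\chi-\bar c\,k_0^\theta$.

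\textbf{Decomposition of the perturbation.} The key computational step is to decompose $k_0^\theta$ along $K^2_{\frac\theta\alpha}\oplus\frac\theta\alpha K^2_\alpha$ in order to read off how $\chi_\alpha$ and $\chi_{\frac\theta\alpha}$ change. I would use Proposition~\ref{cos}(3), which gives $k_0^\theta=k_0^\alpha+\overline{\alpha(0)}\,\alpha\,k_0^{\frac\theta\alpha}$, but this is the decomposition relative to $K^2_\alpha\oplus\alpha K^2_{\frac\theta\alpha}$, whereas \eqref{symbol} uses the decomposition relative to $K^2_{\frac\theta\alpha}\oplus\frac\theta\alpha K^2_\alpha$. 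The main obstacle is thus bookkeeping: I must re-expand $k_0^\theta$ in the correct basis. An analogous identity, obtained by interchanging the roles of $\alpha$ and $\frac\theta\alpha$ in Proposition~\ref{cos}(3), yields $k_0^\theta=k_0^{\frac\theta\alpha}+\overline{\tfrac\theta\alpha(0)}\,\tfrac\theta\alpha\,k_0^\alpha$, so that the component of $\chi^\prime$ in $\frac\theta\alpha K^2_\alpha$ is $\chi_\alpha^\prime=\chi_\alpha-\bar c\,\overline{\tfrac\theta\alpha(0)}\,k_0^\alpha$ and the component in $K^2_{\frac\theta\alpha}$ is $\chi_{\frac\theta\alpha}^\prime=\chi_{\frac\theta\alpha}-\bar c\,k_0^{\frac\theta\alpha}$.

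\textbf{Assembling the two formulas.} For $\mu^\prime$, applying $C_\alpha$ to $\chi_\alpha^\prime$ and using $C_\alpha k_0^\alpha=\tilde k_0^\alpha$ gives $\mu^\prime=C_\alpha\chi_\alpha-\bar c\,\overline{\tfrac\theta\alpha(0)}\,C_\alpha k_0^\alpha=\mu-\bar c\,\overline{\tfrac\theta\alpha(0)}\,\tilde k_0^\alpha$. Here a small subtlety is that $C_\alpha$ is antilinear, so the scalar $\bar c\,\overline{\tfrac\theta\alpha(0)}$ becomes its conjugate $c\,\tfrac\theta\alpha(0)$; this produces exactly the claimed $\mu^\prime=\mu-c\,\tfrac\theta\alpha(0)\,\tilde k_0^\alpha$. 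For $\nu^\prime$, the change splits as $C_\alpha(\psi+c\,k_0^\alpha)+S_\theta^*\!\bigl(\alpha(\chi_{\frac\theta\alpha}-\bar c\,k_0^{\frac\theta\alpha})\bigr)$; the first summand contributes $\nu$ plus $\bar c\,\tilde k_0^\alpha$ (again by antilinearity, $c\mapsto\bar c$), while the second contributes the original $S_\theta^*(\alpha\chi_{\frac\theta\alpha})$ together with $-\bar c\,S_\theta^*(\alpha k_0^{\frac\theta\alpha})$.

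\textbf{The last identity to check.} The remaining task is to evaluate $\bar c\,\tilde k_0^\alpha-\bar c\,S_\theta^*(\alpha k_0^{\frac\theta\alpha})$ and confirm it equals $\bar c\,\overline{\tfrac\theta\alpha(0)}\,\tilde k_0^\theta$. I would combine Proposition~\ref{cos}(4), namely $\tilde k_0^\theta=\tfrac\theta\alpha(0)\,\tilde k_0^\alpha+\alpha\,\tilde k_0^{\frac\theta\alpha}$, with the shift computation $S_\theta^*(\alpha k_0^{\frac\theta\alpha})$. Writing $\alpha k_0^{\frac\theta\alpha}=\alpha-\overline{\tfrac\theta\alpha(0)}\theta$ and applying \eqref{C} in the form $S_\theta^* g=\bar z(g-g(0))$, one obtains $S_\theta^*(\alpha k_0^{\frac\theta\alpha})=\bar z(\alpha-\alpha(0))-\overline{\tfrac\theta\alpha(0)}\,\bar z(\theta-\theta(0))=\tilde k_0^\alpha-\overline{\tfrac\theta\alpha(0)}\,\tilde k_0^\theta$, using the explicit forms of $\tilde k_0^\alpha$ and $\tilde k_0^\theta$. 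Substituting this back yields $\bar c\,\tilde k_0^\alpha-\bar c\bigl(\tilde k_0^\alpha-\overline{\tfrac\theta\alpha(0)}\,\tilde k_0^\theta\bigr)=\bar c\,\overline{\tfrac\theta\alpha(0)}\,\tilde k_0^\theta$, which is precisely the asserted formula $\nu^\prime=\nu+\bar c\,\overline{\tfrac\theta\alpha(0)}\,\tilde k_0^\theta$. The principal difficulty throughout is keeping the two competing direct-sum decompositions of $K^2_\theta$ straight and correctly handling the conjugations introduced by the antilinear map $C_\alpha$.
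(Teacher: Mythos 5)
Your proof is correct and is exactly the direct verification the paper intends --- the paper states Corollary \ref{wn3} without proof (``Using \eqref{symbol}, it is easy to see\dots''), and your computation (re-decomposing $k_0^\theta$ along $K^2_{\frac{\theta}{\alpha}}\oplus\tfrac{\theta}{\alpha}K^2_\alpha$ via $k_0^\theta=k_0^{\frac{\theta}{\alpha}}+\overline{\tfrac{\theta}{\alpha}(0)}\,\tfrac{\theta}{\alpha}\,k_0^\alpha$, then using the antilinearity of $C_\alpha$ and the identity $S_\theta^*(\alpha k_0^{\frac{\theta}{\alpha}})=\tilde k_0^\alpha-\overline{\tfrac{\theta}{\alpha}(0)}\,\tilde k_0^\theta$) supplies precisely the omitted details. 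The only blemish is the transient line $\mu'=\mu-\bar c\,\overline{\tfrac{\theta}{\alpha}(0)}\,\tilde k_0^\alpha$ written before you account for antilinearity, but you correct it immediately and your final formulas match the statement.
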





The examples below illustrate the result of Theorem \ref{char-sp} in the case of Toeplitz matrices.

\begin{example}
Let us consider $\alpha=z^2$, $\theta=z^5$ and a Toeplitz operator $A=A_{\psi+\bar\chi}^{z^5}$. Assume that $\psi=a_0+a_1 z$ and $\chi=\bar b_0+\bar b_{-1} z+\bar b_{-2}z^2+\bar b_{-3}z^3+\bar b_{-4}z^4=(\bar b_0+\bar b_{-1} z+\bar b_{-2}z^2)+z^3(\bar b_{-3}+\bar b_{-4}z)$. Then $C_{z^2}\psi=\bar a_1+\bar a_0 z$, $C_{z^2}P_{z^2}\bar z^3\chi=b_{-4}+b_{-3}z$ and $S^*(z^2(\bar b_0+\bar b_{-1} z+\bar b_{-2}z^2))=\bar b_0 z+\bar b_{-1} z^2+\bar b_{-2}z^3$.
Note  that $A-S^*_{z^2}AS_{z^5}$ has a matrix representation
$$\left(
    \begin{array}{ccccc}
      0& 0 & 0 & 0 & b_{-4} \\
      a_1 & a_0+b_0 & b_{-1} & b_{-2} & b_{-3} \\
    \end{array}
  \right),$$
  which can be expressed as $$(b_{-4}+b_{-3}z)\otimes z^4+z\otimes (\bar a_1+(\bar a_0 +\bar b_0) z+\bar b_{-1} z^2+\bar b_{-2}z^3).$$

On the other hand, let  $A-S^*_{z^2}AS_{z^5}$ have a matrix representation
$$\left(
    \begin{array}{ccccc}
      0& 0 & 0 & 0 & b_{0} \\
      a_0 & a_1 & a_{2} & a_{3} & a_4+b_{1} \\
    \end{array}
  \right),$$
  which can be expressed as $$\mu\otimes z^4+z\otimes \nu=(b_{0}+b_{1}z)\otimes z^4+z\otimes (\bar a_0+\bar a_1  z+\bar a_{2} z^2+\bar a_{3}z^3+\bar a_4 z^4).$$
Note that  $\nu=\nu_{z^2}+z^2\nu_{z^3}=(\bar a_0+\bar a_1  z)+z^2(\bar a_{2} +\bar a_{3}z+\bar a_4 z^2)$.
Then
$\psi=C_{z^2}P_{z^2}\nu=a_1+ a_0 z$ and $\chi=\bar a_2z+\bar a_3z^2+(\bar b_1+\bar a_4)z^3+\bar b_0z^4$.
Hence by Theorem \ref{charakt} we have
\begin{equation}\label{eq13}
A-S_{z^2}AS^*_{z^5}=(a_1+ a_0 z)\otimes 1+1\otimes (\bar a_2z+\bar a_3z^2+(\bar b_1+\bar a_4)z^3+\bar b_0z^4)
\end{equation}
 Requiring that $\nu_{z^3}$ is orthogonal to $z^2$ (see the proof of  Theorem \ref{pr1}) determines that $a_4=0$.

On the other hand, we have some freedom in defining $\psi$ and $\chi$; namely
$\psi_1=s+ a_0 z$ and $\chi_1=\bar t+\bar a_2z+\bar a_3z^2+(\bar b_1+\bar a_4)z^3+\bar b_0z^4$ also satisfy \eqref{eq13} if we assume that $t+s=a_1$.
\end{example}

\begin{example}
Let us now take $\alpha=z^3$, $\theta= z^3(({\lambda-z})/(1-\bar{\lambda}z))^{2}$,
$\lambda \in\mathbb{D}$ and consider the operator $A=A^{\theta,\alpha}_{\psi+\bar{\chi}}$, where $\psi=a_0+a_1z+a_2z^2\in K^2_\alpha$ and $\chi=(\bar b_0+\bar b_1z+\bar b_2z^2+\bar b_3 z^3+\bar b_4z^4)({1-\bar{\lambda}z})^{-2}\in K^2_\theta$ (see \cite[Corollary 5.7.3]{GMR}). Then by Theorem \ref{char-sp} and \eqref{symbol}
 $$A-S^*_{\alpha}AS_{\theta}=\mu\otimes(\lambda^2z^2-2\lambda z^3+z^4)({1-\bar{\lambda}z})^{-2}+z^2\otimes\nu,$$
 where $\mu=b_4+(b_3+2\bar{\lambda}b_4)z+(b_2+3\bar{\lambda}^2b_4+\bar{\lambda}b_3)z^2$
and $\nu=\big(\bar a_2+(\bar a_1-2\bar{\lambda}\bar a_2)z+(\bar{b}_0+\bar a_0-2\bar{\lambda}\bar{a}_1
+\bar{\lambda}\bar{a}_2)z^2+(\bar{b}_1+
\bar{\lambda}^2\bar{a}_1-2\bar{\lambda}\bar{a}_0)z^3+\bar{\lambda}^2\bar{a}_0z^4\big)(1-\bar{\lambda}z)^{-2}.$
\end{example}

\section{Characterizations in terms of rank--one operators}

Our aim now is to describe the classes of symbols of an operator $A\in\mathcal{T}(\theta,\alpha)$ for which the right hand side of
\eqref{cstar} is a rank one operator. The corresponding question regarding the equation \eqref{chaar} is trivial by Corollary \ref{csymbol}, since the right side of \eqref{chaar} is a rank one operator if and only if $\psi=c\cdot k_0^\alpha$ or $\chi=c\cdot k_0^\theta$ with $c\in\mathbb C$. In the case $\alpha=\theta$ the question regarding the equality \eqref{cstar} also has an easy answer, since the relation between the symbols in  \eqref{chaar} and \eqref{cstar} is $\psi=C_\theta \nu$ and $\chi=C_\theta \mu$. For $\alpha\neq\theta$, we need the following lemma.

\begin{lemma}\label{l4}
Let $\alpha\leqslant \theta$ be nonconstant inner functions and $\ata_{\psi+\bar\chi}\in \mathcal{T}(\theta,\alpha)$. Assume that
$\psi\in K^2_\alpha$ and $\chi=\chi_{\frac{\theta}{\alpha}}+\frac{\theta}{\alpha}\chi_\alpha\in K^2_\theta=K^2_{\frac{\theta}{\alpha}}\oplus\frac{\theta}{\alpha}K^2_\alpha$. Then
\begin{enumerate}
\item $P_{\mathbb{C}\tilde k_0^\theta}S^*_\theta(\alpha\chi_{\frac{\theta}{\alpha}})=-\overline{\tfrac{\theta}{\alpha}(0)}\chi_{\frac{\theta}{\alpha}}(0)\|\tilde k_0^\theta\|^{-2}\tilde k_0^\theta$,
\item $P_{\mathbb{C}\tilde k_0^\theta}C_\alpha\psi=\overline{\frac{\theta}{\alpha}(0)\psi(0)}\|\tilde k_0^\theta\|^{-2}\tilde k_0^\theta$,
\item $P_\alpha S^*_\theta(\alpha\chi_{\frac{\theta}{\alpha}})=P_{\mathbb{C}\tilde k_0^\alpha}S^*_\theta(\alpha\chi_{\frac{\theta}{\alpha}})=\chi_{\frac{\theta}{\alpha}}(0)\tilde k_0^\alpha$,
\item $(P_\theta-P_\alpha)S^*_\theta(\alpha\chi_{\frac{\theta}{\alpha}})=\alpha S^*\chi_{\frac{\theta}{\alpha}},$
\item $ P_{\mathbb{C}\tilde k_0^\alpha}C_\alpha P_\alpha(\chi \tfrac{\bar\theta}{\bar \alpha})= \overline {\chi_\alpha(0)}\|\tilde k_0^\alpha\|^{-2}\tilde k_0^\alpha,$
\item $P_{\mathbb{C}\tilde k_0^\alpha}C_\alpha\psi=\overline{\psi(0)}\|\tilde k_0^\alpha\|^{-2}\tilde k_0^\alpha$.
\end{enumerate}
\end{lemma}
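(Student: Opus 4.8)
The plan is to prove Lemma~\ref{l4} by direct computation, treating each of the six identities as an exercise in evaluating inner products against the reproducing-kernel-type vectors $\tilde k_0^\theta$ and $\tilde k_0^\alpha$, using the formulas already established in Section~2. The key tools are equation~\eqref{k}, which tells us that $\langle f,\tilde k_0^\theta\rangle=\overline{(C_\theta f)(0)}$ for $f\in K^2_\theta$, together with Proposition~\ref{cos} (the decomposition formulas for $k_0^\theta$ and $\tilde k_0^\theta$), Proposition~\ref{rozklad} (the behaviour of $C_\theta$ under the splitting $K^2_\theta=K^2_\alpha\oplus\alpha K^2_{\theta/\alpha}$), and Corollary~\ref{wn2}. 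Since each projection $P_{\mathbb C\tilde k_0^\theta}$ is just $\|\tilde k_0^\theta\|^{-2}\,\tilde k_0^\theta\otimes\tilde k_0^\theta$, every claim reduces to computing a single scalar $\langle\,\cdot\,,\tilde k_0^\theta\rangle$ or $\langle\,\cdot\,,\tilde k_0^\alpha\rangle$ and reading off the coefficient.

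\emph{Parts (1), (2) and (6)} are the most routine. For (2) and (6) I would apply \eqref{k}: since $C_\alpha\psi\in K^2_\alpha\subset K^2_\theta$, we have $\langle C_\alpha\psi,\tilde k_0^\theta\rangle=\overline{(C_\theta C_\alpha\psi)(0)}$ and $\langle C_\alpha\psi,\tilde k_0^\alpha\rangle=\overline{(C_\alpha C_\alpha\psi)(0)}=\overline{\psi(0)}$; for the first of these I would use Proposition~\ref{rozklad}(1) to evaluate $C_\theta$ on the vector $C_\alpha\psi\in K^2_\alpha$, which introduces the factor $\frac\theta\alpha(0)$. For (1), I would write $S^*_\theta(\alpha\chi_{\theta/\alpha})$ using \eqref{C} and pair it against $\tilde k_0^\theta$, again via \eqref{k}; the coefficient $-\overline{\frac\theta\alpha(0)}\chi_{\theta/\alpha}(0)$ should emerge after applying Proposition~\ref{rozklad} to identify the relevant value at $0$.

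\emph{Parts (3), (4) and (5)} require slightly more care because they involve projections onto whole subspaces rather than onto a single line. For (3) the natural route is to show directly that $P_\alpha S^*_\theta(\alpha\chi_{\theta/\alpha})$ lands in $\mathbb C\tilde k_0^\alpha$ and then compute its single coefficient via \eqref{k}; the two equalities in (3) are then a matched pair. For (4) I would verify that $(P_\theta-P_\alpha)=\alpha P_{\theta/\alpha}\bar\alpha$ by Proposition~\ref{cos}(2), and then a short computation with $S^*$ on $K^2_{\theta/\alpha}$ should collapse the expression to $\alpha S^*\chi_{\theta/\alpha}$. Part (5) mirrors part (6) but uses Corollary~\ref{wn2}(1) to rewrite $C_\alpha P_\alpha(\chi\frac{\bar\theta}{\bar\alpha})$ in a form where \eqref{k} applies and produces $\overline{\chi_\alpha(0)}$.

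\emph{The main obstacle} I anticipate is bookkeeping rather than conceptual: consistently tracking which component of the decomposition ($\chi_{\theta/\alpha}$ versus $\chi_\alpha$) contributes to each value at $0$, and keeping the conjugate-linear action of $C_\alpha$ and $C_\theta$ straight when moving between $K^2_\alpha$, $K^2_{\theta/\alpha}$ and $K^2_\theta$. The cleanest way to avoid sign and conjugation errors is to reduce every identity to the single scalar produced by \eqref{k}, compute that scalar using the explicit decomposition formulas of Proposition~\ref{cos} and Proposition~\ref{rozklad}, and only at the end multiply by $\|\tilde k_0^\theta\|^{-2}\tilde k_0^\theta$ or $\|\tilde k_0^\alpha\|^{-2}\tilde k_0^\alpha$ as appropriate. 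No genuinely hard step is expected; the lemma is a collection of preparatory computations feeding into the rank-one analysis of the following theorem.
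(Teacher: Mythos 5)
Your overall strategy---reducing each item to a single scalar inner product against $\tilde k_0^\theta$ or $\tilde k_0^\alpha$, computed via \eqref{k}, the conjugation identities, Propositions \ref{cos} and \ref{rozklad}, and Corollary \ref{wn2}---is exactly the paper's strategy, and for items (2)--(6) your plan executes correctly. For instance, in (2) Proposition \ref{rozklad}(1) gives $C_\theta C_\alpha\psi=\tfrac{\theta}{\alpha}\psi$, hence $\langle C_\alpha\psi,\tilde k_0^\theta\rangle=\overline{\tfrac{\theta}{\alpha}(0)\psi(0)}$ as claimed (the paper gets the same scalar by pairing against $P_\alpha\tilde k_0^\theta=\tfrac{\theta}{\alpha}(0)\tilde k_0^\alpha$ instead); items (3)--(6) go through as you describe.

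The step that would fail is item (1): the coefficient you promise will ``emerge'' does not emerge. The decisive check is the adjoint shortcut, using \eqref{D} and \eqref{k}:
\[
\langle S^*_\theta(\alpha\chi_{\frac{\theta}{\alpha}}),\tilde k_0^\theta\rangle
=\langle \alpha\chi_{\frac{\theta}{\alpha}},\,S_\theta\tilde k_0^\theta\rangle
=-\overline{\theta(0)}\,\langle \alpha\chi_{\frac{\theta}{\alpha}},k_0^\theta\rangle
=-\overline{\theta(0)}\,\alpha(0)\,\chi_{\frac{\theta}{\alpha}}(0)
=-|\alpha(0)|^2\,\overline{\tfrac{\theta}{\alpha}(0)}\,\chi_{\frac{\theta}{\alpha}}(0),
\]
and your own route gives the same value: by $C_\theta S^*_\theta=S_\theta C_\theta$, Proposition \ref{rozklad}(1) (which yields $C_\theta(\alpha\chi_{\frac{\theta}{\alpha}})=C_{\frac{\theta}{\alpha}}\chi_{\frac{\theta}{\alpha}}$) and \eqref{B}, one again lands on $-\alpha(0)\overline{\theta(0)}\chi_{\frac{\theta}{\alpha}}(0)$. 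This differs from the asserted $-\overline{\tfrac{\theta}{\alpha}(0)}\chi_{\frac{\theta}{\alpha}}(0)$ by the factor $|\alpha(0)|^2$, and the two agree only when $\tfrac{\theta}{\alpha}(0)=0$ or $\chi_{\frac{\theta}{\alpha}}(0)=0$; so an honest execution of your plan refutes (1) as stated rather than proving it. This is not a bookkeeping problem you can fix by more care: the paper's own proof expands the same inner product into four terms (note $\langle\chi_{\frac{\theta}{\alpha}},\tfrac{\theta}{\alpha}\rangle=0$ there) which sum to $-\alpha(0)\overline{\theta(0)}\chi_{\frac{\theta}{\alpha}}(0)$, and its concluding equality is a slip. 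In your write-up you must therefore either prove the corrected identity, with coefficient $-\alpha(0)\overline{\theta(0)}$ in place of $-\overline{\tfrac{\theta}{\alpha}(0)}$, or explicitly flag the discrepancy; promising that the stated coefficient falls out of the calculation is precisely the step that cannot be completed.
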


\begin{proof} To prove (1) it is enough to calculate
\begin{multline*}\langle S^*(\alpha\chi_{\frac{\theta}{\alpha}}),\tilde k_0^\theta\rangle=\langle \bar z(\alpha\chi_{\frac{\theta}{\alpha}}-\alpha(0)\chi_{\frac{\theta}{\alpha}}(0)),\bar z (\theta-\theta(0))\rangle\\=\langle \chi_{\frac{\theta}{\alpha}},\tfrac{\theta}{\alpha}\rangle-\alpha(0)\chi_{\frac{\theta}{\alpha}}(0)\langle 1,\theta\rangle-\overline{\theta(0)}\langle \alpha \chi_{\frac{\theta}{\alpha}},1\rangle+\alpha(0)\overline{\theta(0)}\chi_{\frac{\theta}{\alpha}}(0)=-\overline{\tfrac{\theta}{\alpha}(0)}\chi_{\frac{\theta}{\alpha}}(0).
\end{multline*}
To show (2) note that by Proposition \ref{cos}
 \begin{multline*}
 \langle C_\alpha \psi, \tilde k_0^\theta\rangle= \langle C_\alpha \psi, P_\alpha\tilde k_0^\theta\rangle=
  \langle C_\alpha \psi, \tfrac{\theta}{\alpha}(0)\tilde k_0^\alpha\rangle\\= \langle C_\alpha \tfrac{\theta}{\alpha}(0)\tilde k_0^\alpha,\psi\rangle=\overline{\tfrac{\theta}{\alpha}(0)}\langle k_0^\alpha,\psi\rangle=\overline{\tfrac{\theta}{\alpha}(0)}\ \overline{\psi(0)}.
  \end{multline*}
 The equalities in (3) follow from
 \begin{multline*}
 P_\alpha S^*_\theta(\alpha\chi_{\frac{\theta}{\alpha}})=  P_\alpha (\bar z\alpha\chi_{\frac{\theta}{\alpha}})=\alpha P^-\bar\alpha\bar z\alpha\chi_{\frac{\theta}{\alpha}}-P^-\bar z\alpha\chi_{\frac{\theta}{\alpha}}
 =\alpha P^-\bar z\chi_{\frac{\theta}{\alpha}}-\alpha(0)\chi_{\frac{\theta}{\alpha}}(0)\bar z\\=\alpha \chi_{\frac{\theta}{\alpha}}(0)\bar z-\alpha(0)\chi_{\frac{\theta}{\alpha}}(0)\bar z=\chi_{\frac{\theta}{\alpha}}(0)\tilde k_0^\alpha=P_{\mathbb{C}\tilde k_0^\alpha}S^*_\theta(\alpha\chi_{\frac{\theta}{\alpha}}).
 \end{multline*}
 To see (4) it is enough to calculate
  \begin{multline*}
(P_\theta- P_\alpha) S^*_\theta(\alpha\chi_{\frac{\theta}{\alpha}})=  \alpha P_{\frac{\theta}{\alpha}}\bar\alpha
S^*_\theta(\alpha\chi_{\frac{\theta}{\alpha}})= \alpha P_{\frac{\theta}{\alpha}}(\bar\alpha\bar z
(\alpha\chi_{\frac{\theta}{\alpha}}-\alpha(0)\chi_{\frac{\theta}{\alpha}}(0)))\\= \alpha P_{\frac{\theta}{\alpha}}(\bar z\chi_{\frac{\theta}{\alpha}})=
\alpha S^*_{\frac{\theta}{\alpha}}\chi_{\frac{\theta}{\alpha}}=\alpha S^*\chi_{\frac{\theta}{\alpha}}.
 \end{multline*}
 The proof of (5) follows from
 $\langle C_\alpha P_\alpha(\chi \tfrac{\bar\theta}{\bar \alpha}), \tilde k_0^\alpha\rangle=\langle k_0^\alpha, \chi_\alpha\rangle = \overline {\chi_\alpha(0)}$, and to show that (6) holds  note that
 $\langle C_\alpha\psi, \tilde k_0^\alpha\rangle=\langle k_0^\alpha, \psi\rangle=\overline{\psi(0)}.$
\end{proof}

\begin{theorem}\label{pr2}
Let $\alpha\leqslant \theta$ be nonconstant inner functions and let $\ata_{\psi+\bar\chi}\in \mathcal{T}(\theta,\alpha)$, where $\psi\in K^2_\alpha$ and $\chi\in K^2_\theta$.
Then
\begin{enumerate}
\item $\ata_{\psi+\bar\chi}-S^*_\alpha \ata_{\psi+\bar\chi}S_\theta=\mu\otimes \tilde k^\theta_0$ for $\mu\in K^2_\alpha$ if and only if there is $s\in\mathbb{C}$ such that $\psi=sk_0^\alpha$, $P_{\frac{\theta}{\alpha}}\chi=-\bar sk_0^{\frac\theta\alpha}$,
\item    $\ata_{\psi+\bar\chi}-S^*_\alpha \ata_{\psi+\bar\chi}S_\theta=\tilde k^\alpha_0\otimes \nu$ for $\nu\in K^2_\theta$ if and only if $P_\alpha(\chi\frac{\bar\theta}{\bar\alpha})={const}\cdot k_0^\alpha$.
\end{enumerate}
\end{theorem}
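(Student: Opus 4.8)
The plan is to reduce both parts to the rank-two identity of Theorem \ref{char-sp}. Writing $A=\ata_{\psi+\bar\chi}$ and inserting the symbols from \eqref{symbol},
\[
A-S^*_\alpha A S_\theta=\mu\otimes\tilde k_0^\theta+\tilde k_0^\alpha\otimes\nu,\qquad \mu=C_\alpha\chi_\alpha\in K^2_\alpha,\quad \nu=C_\alpha\psi+S^*_\theta(\alpha\chi_{\frac\theta\alpha})\in K^2_\theta.
\]
The first step is the elementary remark that, since $\tilde k_0^\theta\neq0$ and $\tilde k_0^\alpha\neq0$, this operator has the form $\mu'\otimes\tilde k_0^\theta$ for some $\mu'\in K^2_\alpha$ if and only if $\nu\in\mathbb C\tilde k_0^\theta$, and the form $\tilde k_0^\alpha\otimes\nu'$ for some $\nu'\in K^2_\theta$ if and only if $\mu\in\mathbb C\tilde k_0^\alpha$ (by comparing the column and row spaces of the two rank-one summands). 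This is exactly where the nonuniqueness \eqref{6.2a}, i.e.\ Corollary \ref{wn3}, enters, and it shows that both conditions are independent of the chosen symbol. Hence part (1) reduces to ``$\nu\in\mathbb C\tilde k_0^\theta$ iff $\psi=sk_0^\alpha$ and $P_{\frac\theta\alpha}\chi=-\bar sk_0^{\frac\theta\alpha}$'', and part (2) to ``$\mu\in\mathbb C\tilde k_0^\alpha$ iff $P_\alpha(\chi\tfrac{\bar\theta}{\bar\alpha})\in\mathbb C k_0^\alpha$''.

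For part (1) I would split $\nu$ along $K^2_\theta=K^2_\alpha\oplus\alpha K^2_{\frac\theta\alpha}$. Since $C_\alpha\psi\in K^2_\alpha$, Lemma \ref{l4} (3) and (4) yield $P_\alpha\nu=C_\alpha\psi+\chi_{\frac\theta\alpha}(0)\tilde k_0^\alpha$ and $(P_\theta-P_\alpha)\nu=\alpha S^*\chi_{\frac\theta\alpha}$, while Proposition \ref{cos}(4) splits $\tilde k_0^\theta=\tfrac\theta\alpha(0)\tilde k_0^\alpha+\alpha\tilde k_0^{\frac\theta\alpha}$ in the same way. Matching components, $\nu=c\,\tilde k_0^\theta$ is equivalent to
\[
C_\alpha\psi=\bigl(c\tfrac\theta\alpha(0)-\chi_{\frac\theta\alpha}(0)\bigr)\tilde k_0^\alpha\qquad\text{and}\qquad S^*_{\frac\theta\alpha}\chi_{\frac\theta\alpha}=c\,\tilde k_0^{\frac\theta\alpha}.
\]
For the forward implication one substitutes $\psi=sk_0^\alpha$, $\chi_{\frac\theta\alpha}=-\bar sk_0^{\frac\theta\alpha}$ and checks, via $C_\alpha k_0^\alpha=\tilde k_0^\alpha$ and \eqref{D}, that both equations hold with $c=\bar s\,\overline{\tfrac\theta\alpha(0)}$ (note $P_{\frac\theta\alpha}\chi=\chi_{\frac\theta\alpha}$ throughout). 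For the converse, the first equation immediately gives $\psi=sk_0^\alpha$ with $\bar s=c\tfrac\theta\alpha(0)-\chi_{\frac\theta\alpha}(0)$.

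The crux is then to pass from the second equation $S^*_{\frac\theta\alpha}\chi_{\frac\theta\alpha}=c\,\tilde k_0^{\frac\theta\alpha}$ to $\chi_{\frac\theta\alpha}=-\bar sk_0^{\frac\theta\alpha}$; one cannot invert $S^*_{\frac\theta\alpha}$ directly, since its kernel (the constants) is nontrivial precisely when $\tfrac\theta\alpha(0)=0$, and this is where I expect the main obstacle. The plan is to argue uniformly by pairing. Taking the inner product of the equation with $\tilde k_0^{\frac\theta\alpha}$ and using $S_{\frac\theta\alpha}\tilde k_0^{\frac\theta\alpha}=-\tfrac\theta\alpha(0)k_0^{\frac\theta\alpha}$ from \eqref{D}, the reproducing identity $\langle\chi_{\frac\theta\alpha},k_0^{\frac\theta\alpha}\rangle=\chi_{\frac\theta\alpha}(0)$ from \eqref{k}, and $\|\tilde k_0^{\frac\theta\alpha}\|^2=k_0^{\frac\theta\alpha}(0)=1-|\tfrac\theta\alpha(0)|^2$, produces the scalar relation $c\,(1-|\tfrac\theta\alpha(0)|^2)=-\overline{\tfrac\theta\alpha(0)}\,\chi_{\frac\theta\alpha}(0)$. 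Now set $h=\chi_{\frac\theta\alpha}+\bar sk_0^{\frac\theta\alpha}$. Applying $S^*_{\frac\theta\alpha}$ (again via \eqref{D}) gives $S^*_{\frac\theta\alpha}h=(c-\bar s\,\overline{\tfrac\theta\alpha(0)})\tilde k_0^{\frac\theta\alpha}$; substituting $\bar s=c\tfrac\theta\alpha(0)-\chi_{\frac\theta\alpha}(0)$ and then the scalar relation collapses the coefficient to $0$, so $h$ is constant, while the same two relations force $h(0)=0$. Thus $h=0$ and $\chi_{\frac\theta\alpha}=-\bar sk_0^{\frac\theta\alpha}$, completing (1); the point of this manoeuvre is to absorb the degeneracy $\tfrac\theta\alpha(0)=0$ without a case split.

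Part (2) is then short. Since $C_\alpha$ is an antilinear involution with $C_\alpha k_0^\alpha=\tilde k_0^\alpha$, the condition $\mu=C_\alpha\chi_\alpha\in\mathbb C\tilde k_0^\alpha$ is equivalent to $\chi_\alpha\in\mathbb C k_0^\alpha$. It remains to identify $\chi_\alpha$ with the quantity in the statement: Proposition \ref{kzero}(4) (equivalently Corollary \ref{wn2}(1)) gives $C_\alpha P_\alpha(\tfrac{\bar\theta}{\bar\alpha}\chi)=C_\alpha\chi_\alpha$, hence $P_\alpha(\chi\tfrac{\bar\theta}{\bar\alpha})=\chi_\alpha$, so that $\chi_\alpha\in\mathbb C k_0^\alpha$ reads precisely as $P_\alpha(\chi\tfrac{\bar\theta}{\bar\alpha})=\mathrm{const}\cdot k_0^\alpha$, as required.
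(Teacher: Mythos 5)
Your proof is correct and follows essentially the same route as the paper's: both arguments use the symbol formulas \eqref{symbol}, reduce the rank-one condition to $\nu\in\mathbb{C}\,\tilde k_0^\theta$ (resp.\ $\mu\in\mathbb{C}\,\tilde k_0^\alpha$), split everything along $K^2_\theta=K^2_\alpha\oplus\alpha K^2_{\frac{\theta}{\alpha}}$ via Lemma \ref{l4} and Proposition \ref{cos}~(4), and read off $\psi=s k_0^\alpha$ and $P_{\frac{\theta}{\alpha}}\chi=-\bar s k_0^{\frac{\theta}{\alpha}}$, with part (2) handled through $\mu=C_\alpha\chi_\alpha$. The only (cosmetic) divergence is the final step: you recover $\chi_{\frac{\theta}{\alpha}}$ from $S^*_{\frac{\theta}{\alpha}}\chi_{\frac{\theta}{\alpha}}=c\,\tilde k_0^{\frac{\theta}{\alpha}}$ by pairing with $\tilde k_0^{\frac{\theta}{\alpha}}$ and introducing the auxiliary function $h$, whereas the paper applies the defect identity $I_{K^2_{\frac{\theta}{\alpha}}}=S_{\frac{\theta}{\alpha}}S^*_{\frac{\theta}{\alpha}}+k_0^{\frac{\theta}{\alpha}}\otimes k_0^{\frac{\theta}{\alpha}}$ as in \eqref{chita}, together with \eqref{D} --- an equivalent one-line device that absorbs the kernel of $S^*_{\frac{\theta}{\alpha}}$ you were worried about, with no case split on $\tfrac{\theta}{\alpha}(0)$.
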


\begin{proof}
Assume first that  $\ata_{\psi+\bar\chi}-S^*_\alpha \ata_{\psi+\bar\chi}S_\theta=\mu\otimes \tilde k^\theta_0$ for $\mu\in K^2_\alpha$. Now,
  the right hand side of the equation \eqref{cstar} reduces to $\mu\otimes \tilde k^\theta_0$ if and only if $\nu=c\,\cdot \tilde k_0^\theta$ with $c\in\mathbb C$, which is equivalent to $\nu-P_{\mathbb{C}\tilde k_0^\theta}\nu=0.$ Let
  $\chi=\chi_{\frac{\theta}{\alpha}}+\frac{\theta}{\alpha}\chi_\alpha\in K^2_\theta=K^2_{\frac{\theta}{\alpha}}\oplus\frac{\theta}{\alpha}K^2_\alpha$.
 By formulas  \eqref{symbol} and Lemma \ref{l4} we thus have that
\begin{multline}\label{e7}0=C_\alpha\psi+S_\theta^*(\alpha\chi_{\frac{\theta}{\alpha}})+\overline{\tfrac{\theta}{\alpha}(0)}(\chi_\frac{\theta}{\alpha}(0)-\overline{\psi(0)})\|k_0^\theta\|^{-2}\tilde k_0^\theta\\=\big(C_\alpha\psi+(\chi_{\frac{\theta}{\alpha}}(0)+|\tfrac{\theta}{\alpha}(0)|^2(\chi_{\frac{\theta}{\alpha}}(0)-\overline{\psi(0)})\|k_0^\theta\|^{-2})\tilde k_0^\alpha\big)\\+\alpha (S_\theta^*\chi_{\frac{\theta}{\alpha}}+\overline{\tfrac{\theta}{\alpha}(0)}(\chi_{\frac{\theta}{\alpha}}(0)-\overline{\psi(0)})\|k_0^\theta\|^{-2}\tilde k_0^{\frac{\theta}{\alpha}}).\end{multline}
Since the right hand side of \eqref{e7} is an orthogonal sum,  each term must be zero. Thus, for $s=-\overline{\chi_{\frac{\theta}{\alpha}}(0)}-|\tfrac{\theta}{\alpha}(0)|^2(\overline{\chi_{\frac{\theta}{\alpha}}(0)}-\psi(0))\|k_0^\theta\|^{-2}$, we get $\psi=s k_0^\alpha$. Moreover, $S_\theta^*\chi_{\frac{\theta}{\alpha}}=-\overline{\tfrac{\theta}{\alpha}(0)}\big(\chi_{\frac{\theta}{\alpha}}(0)-\overline{\psi(0)}\big)\|k_0^\theta\|^{-2}\tilde k_0^{\frac{\theta}{\alpha}}$. Hence, proceeding as in \eqref{chita}, we get
$$
\chi_{\frac{\theta}{\alpha}}=\chi_{\frac{\theta}{\alpha}}(0)k_0^{\frac{\theta}{\alpha}}-\overline{\tfrac{\theta}{\alpha}(0)}(\chi_{\frac{\theta}{\alpha}}(0)-\overline{\psi(0)})\|k_0^\theta\|^{-2}S_{\frac{\theta}{\alpha}}\tilde k_0^{\frac{\theta}{\alpha}}=-\bar s k_0^{\frac{\theta}{\alpha}},
$$
by Lemma \ref{D}.

The converse is immediate from \eqref{symbol}.
The proof of (2) is analogous.
\end{proof}

\begin{remark}\label{rem1} When the right hand side of the characterization \eqref{chaar} reduces to a rank one operator
  $const\cdot k_0^\alpha\otimes k_0^\theta$ it is immediate that this operator can be expressed in terms of the symbol $\psi+\bar \chi$ as
    \[const\cdot k_0^\alpha\otimes k_0^\theta=P_{\mathbb{C} k_0^\alpha}\psi\otimes  k_0^\theta+  k_0^\alpha\otimes P_{\mathbb{C} k_0^\theta}\chi=\big(\psi(0)\|\tilde k_0^\alpha\|^{-2}+\overline{\chi(0)}\|\tilde k_0^\theta\|^{-2}\big) k_0^\alpha\otimes k_0^\theta.\]
     It might be of independent interest to consider the case when the right hand side in the equation \eqref{cstar}
reduces to a rank one operator  $const\cdot \tilde k_0^\alpha\otimes\tilde k_0^\theta$.
 By equations \eqref{symbol} and Lemma \ref{l4} (1-2) and (5) this operator can be expressed in terms of the symbol $\psi+\bar \chi$ as
   \begin{multline*}const\cdot\tilde k_0^\alpha\otimes\tilde k_0^\theta = P_{\mathbb{C}\tilde k_0^\alpha}\mu\otimes \tilde k_0^\theta+ \tilde k_0^\alpha\otimes P_{\mathbb{C}\tilde k_0^\theta}\nu=\\
    \left(\overline {\chi_\alpha(0)}\|\tilde k_0^\alpha\|^{-2}+\tfrac{\theta}{\alpha}(0)(\psi(0)-\overline{\chi_{\frac \theta\alpha}(0)} \|\tilde k_0^\theta\|^{-2})\right)\ \tilde k_0^\alpha\otimes\tilde k_0^\theta.\end{multline*}
A similar question can be asked regarding the case when the right hand side of the equation \eqref{cstar}
reduces to a rank one operator $const\cdot\tilde k_0^\alpha\otimes\tilde k_0^\alpha$. By
 equations \eqref{symbol}, Proposition \ref{cos} (4) and Lemma \ref{l4} (3), (5) and (6) we have
\begin{multline*}const\cdot\tilde k_0^\alpha\otimes\tilde k_0^\alpha=P_{\mathbb{C}\tilde k_0^\alpha}\mu\otimes \tilde k_0^\alpha+ \tilde k_0^\alpha\otimes P_{\mathbb{C}\tilde k_0^\alpha}\nu=\\\left(\overline{\chi(0)}-\overline{\chi_{\frac{\theta}{\alpha}}(0)}|\alpha(0)|^2 +\psi(0)\right)\, \|k_0^\alpha\|^{-2}\  \tilde k_0^\alpha\otimes\tilde k_0^\alpha.\end{multline*}
\end{remark}

\section{An inverse problem: from the operator to the symbol}

In the case of a classical Toeplitz operator $T_\varphi$ on $H^2$,  the (unique) symbol  $\varphi$ can be obtained from the operator by the formula $\lim_{n\to\infty}\bar z^nT_\varphi z^n$. In the case of a truncated Toeplitz operator, i.e., of the form $\aat_\varphi$ with $\alpha=\theta$, one can obtain a symbol belonging to $H^2+\overline{H^2}$ from the action of $A_\varphi^\theta$ on $k^\theta_0$ and $\tilde k^\theta_0$
(\cite{BCT}). A similar result can be obtained for an asymmetric truncated Toeplitz operator $A\in\mathcal{T}(\theta,\alpha)$ by considering the action of the operator $A$ and its adjoint on reproducing kernel functions of the same kind.

Note firstly that if $A$ has a symbol $\psi+\bar\chi$ with $\psi\in K^2_\alpha,\, \chi\in K^2_{\theta}$, writing $\chi=\chi_{\frac{\theta}{\alpha}}+\frac{\theta}{\alpha}\chi_\alpha$, where $\chi_{\frac{\theta}{\alpha}}\in K^2_{\frac{\theta}{\alpha}}$ and $\chi_\alpha\in K^2_\alpha$, then by Proposition \ref{kzero} we have the following equations on $L^2$:
\begin{equation}\label{uk2}
\left\{\begin{alignedat}{12}
&\psi\,&-&&\overline{\theta(0)}\alpha\bar\chi_\alpha\,&=A k_0^\theta-\overline{\chi(0)}k_0^\alpha
+\overline{\theta(0)}\overline{\chi_\alpha(0)}\alpha\\
-\theta(0)\bar z&\psi\,&+&&\bar z\alpha \bar\chi_\alpha\,&=A\tilde{k}_0^{\theta}-\bar z\tfrac{\theta}{\alpha}(0)\psi(0)\alpha.
\end{alignedat}
\right.\end{equation}

Taking the scalar product with $k_0^\alpha$ and $\tilde k_0^\alpha$, respectively, we obtain
\begin{equation}\label{uk1}
\left\{\begin{alignedat}{12}
\,{}&\psi(0)&+&\overline{\theta(0)}\alpha(0)&\ \overline{\chi_\alpha(0)}\,&
+&\|k_0^\alpha\|^2&\ \overline{\chi(0)}\,&=\,
\langle A k_0^\theta,k_0^\alpha\rangle\\
\tfrac{\theta}{\alpha}(0)\,&\psi(0)\,&+& &\overline{\chi_\alpha(0)}\,& & & \,&=\,\langle A \tilde k_0^\theta,\tilde k_0^\alpha\rangle
\end{alignedat}
\right.\end{equation}
and, by using the notation $a=\psi(0)$, $b=\overline{\chi_\alpha(0)}$, $c=\overline{\chi(0)}$ in \eqref{uk1}, we obtain the following system of equations for the unknowns $a, b, c$:
\begin{equation}\label{uk4}
\left\{\begin{alignedat}{12}
\,{}&a&+&\overline{\theta(0)}\alpha(0)&\ b\,&
+&\|k_0^\alpha\|^2&\ c\,&=\,
\langle A k_0^\theta,k_0^\alpha\rangle\\
-\tfrac{\theta}{\alpha}(0)\,&a\,&+& &b\,& & & \,&=\,\langle A \tilde k_0^\theta,\tilde k_0^\alpha\rangle.
\end{alignedat}
\right.\end{equation} Hence we proved the following.

\begin{lemma}\label{l5}
Let $A\in\mathcal{T}(\theta,\alpha)\,,\, A=A_{\psi+\bar\chi}$ where $\psi\in K^2_\alpha,\, \chi\in K^2_{\theta}$ and $\chi=\chi_{\frac{\theta}{\alpha}}+\frac{\theta}{\alpha}\chi_\alpha$,
with $\chi_{\frac{\theta}{\alpha}}\in K^2_{\frac{\theta}{\alpha}}$ and $\chi_\alpha\in K^2_\alpha$.
Then the values $\psi(0)$, $\overline{\chi_\alpha(0)}$, $\overline{\chi_{\frac{\theta}{\alpha}}(0)}$ and $\overline{\chi(0)}=\overline{\chi_{\frac{\theta}{\alpha}}(0)}+\overline{\frac{\theta}{\alpha}(0)}\ \overline{\chi_\alpha(0)}$ can be determined from $\langle A k_0^\theta,k_0^\alpha\rangle$, $\langle A \tilde k_0^\theta,\tilde k_0^\alpha\rangle$ as a solution to the system \eqref{uk4}. The solution is unique if  we fix a concrete value of  $\psi(0)$ or $\chi(0)$.
\end{lemma}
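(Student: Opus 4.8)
The plan is to read the statement off the linear system \eqref{uk4}, which has already been assembled above: the operator identities \eqref{uk2} come from Proposition \ref{kzero}, and pairing them with $k_0^\alpha$ and $\tilde k_0^\alpha$ produced \eqref{uk1} and hence \eqref{uk4}. Thus the substantive content that remains is purely the solvability analysis of \eqref{uk4}, viewed as two linear equations in the three unknowns $a=\psi(0)$, $b=\overline{\chi_\alpha(0)}$, $c=\overline{\chi(0)}$, whose right-hand sides $\langle Ak_0^\theta,k_0^\alpha\rangle$ and $\langle A\tilde k_0^\theta,\tilde k_0^\alpha\rangle$ depend only on $A$.

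First I would eliminate $b$ by the second equation, $b=\langle A\tilde k_0^\theta,\tilde k_0^\alpha\rangle+\tfrac{\theta}{\alpha}(0)\,a$, and substitute into the first. The resulting cross term simplifies using the factorization $\theta(0)=\alpha(0)\tfrac{\theta}{\alpha}(0)$, which gives $\overline{\theta(0)}\alpha(0)\tfrac{\theta}{\alpha}(0)=|\theta(0)|^2$, so that \eqref{uk4} collapses to the single relation
\[
(1+|\theta(0)|^2)\,a+\|k_0^\alpha\|^2\,c=\langle Ak_0^\theta,k_0^\alpha\rangle-\overline{\theta(0)}\alpha(0)\langle A\tilde k_0^\theta,\tilde k_0^\alpha\rangle.
\]
This is one equation in the two unknowns $a,c$, so the solution set of \eqref{uk4} is a one-parameter affine family. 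The four listed quantities are therefore pinned down up to this single degree of freedom; in particular $\overline{\chi_{\frac{\theta}{\alpha}}(0)}$ is recovered from $b$ and $c$ through the stated identity $\overline{\chi(0)}=\overline{\chi_{\frac{\theta}{\alpha}}(0)}+\overline{\tfrac{\theta}{\alpha}(0)}\,\overline{\chi_\alpha(0)}$, which is just $\chi(0)=\chi_{\frac{\theta}{\alpha}}(0)+\tfrac{\theta}{\alpha}(0)\chi_\alpha(0)$ conjugated.

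For the uniqueness claim, the key observation is that both coefficients in the displayed relation are nonzero: clearly $1+|\theta(0)|^2\geq 1$, while $\|k_0^\alpha\|^2=k_0^\alpha(0)=1-|\alpha(0)|^2>0$, since $\alpha$ is a nonconstant inner function and hence $|\alpha(0)|<1$. Consequently, fixing a concrete value of $a=\psi(0)$ forces a unique value of $c=\overline{\chi(0)}$, and conversely; once $a$ and $c$ are known, $b$ follows from the second equation and $\overline{\chi_{\frac{\theta}{\alpha}}(0)}$ from the identity above, giving a unique solution.

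The main obstacle, modest as it is, is verifying the two algebraic facts $\overline{\theta(0)}\alpha(0)\tfrac{\theta}{\alpha}(0)=|\theta(0)|^2$ and $\|k_0^\alpha\|^2=1-|\alpha(0)|^2$, since exactly these ensure that both pivots of the reduced system are strictly positive; that positivity is what turns ``fixing one value'' into a genuine uniqueness statement. The remainder is routine substitution.
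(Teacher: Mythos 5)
Your proposal is correct and takes essentially the same route as the paper: the paper's proof of Lemma \ref{l5} consists precisely of the preceding derivation of the system \eqref{uk4} (via Proposition \ref{kzero} and pairing \eqref{uk2} with $k_0^\alpha$, $\tilde k_0^\alpha$), with the unique-solvability claim left implicit, and you simply make that linear algebra explicit by eliminating $b$ and checking that the two pivots $1+|\theta(0)|^2$ and $\|k_0^\alpha\|^2=1-|\alpha(0)|^2$ are nonzero. One small remark: the sign of the $\tfrac{\theta}{\alpha}(0)\,a$ term in \eqref{uk4} is inconsistent with \eqref{uk1} (a typo in the paper; the computation of $\langle A\tilde k_0^\theta,\tilde k_0^\alpha\rangle$ in the proof of Theorem \ref{char-sp} confirms the $+$ sign), and with the corrected sign your pivot becomes $1-|\theta(0)|^2$, still positive because $|\theta(0)|<1$ for a nonconstant inner function, so your argument goes through unchanged.
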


Remark that by Corollary \ref{csymbol} we can always choose a concrete value of one of $\psi(0)$ or $\chi(0)$.

From equations \eqref{uk2} we cannot obtain $\chi_{\frac{\theta}{\alpha}}$, hence we need another equality, which we formulate using Proposition \ref{kzero} (5-6) and applying $C_\theta$. We get
$$C_\theta (A^{\alpha,\theta}_{\bar{\psi}}k_0^{\alpha})= \psi(0)\tilde k_0^\theta-\alpha(0)(\tfrac{\theta}{\alpha}\bar z\psi-\tfrac{\theta}{\alpha}(0)\psi(0)\bar z),$$
and by Proposition \ref{rozklad}
$$C_\theta (A^{\alpha,\theta}_\chi k_0^\alpha)=\alpha\bar z\bar\chi_\alpha+\theta\bar z\bar\chi_{\frac{\theta}{\alpha}} -\alpha(0)\tfrac{\theta}{\alpha}\bar z\bar\chi_{\frac{\theta}{\alpha}}.$$
The operator $A$ must satisfy the condition
\begin{equation}\label{uk3}
-\alpha(0)\tfrac{\theta}{\alpha}\bar z\psi+\alpha\bar z\bar\chi_\alpha+(\alpha-\alpha(0))\bar z\tfrac{\theta}{\alpha}\bar\chi_{\frac{\theta}{\alpha}}=C_\theta A^*k_0^\alpha-\psi(0)\bar z\theta.
\end{equation}
To obtain a symbol for the operator $A$, we take the analytic functions $\psi=X$, $\alpha\bar\chi_\alpha=Y$ and $\frac{\theta}{\alpha}\bar\chi_{\frac{\theta}{\alpha}}=Z$  as unknowns, and substitute  $a=\psi(0)$, $b=\overline{\chi_\alpha(0)}$, $c=\overline{\chi(0)}$. Then from \eqref{uk2}, \eqref{uk3} we get the system of equations
\begin{equation}\label{uk5}
\left\{\begin{alignedat}{12}
&\,X\,&-&{}&\ \overline{\theta(0)}\,Y\,& & & &\,=\,&Ak_0^\theta-c\,k_0^\alpha+b\,\overline{\theta(0)}\,\alpha,\\
-\theta(0)&\,X\,&+& & \,Y\,& & & &\,=\,&z\,A\tilde k_0^\theta-\tfrac{\theta}{\alpha}(0)\,a\,\alpha,\\
-\alpha(0)\tfrac{\theta}{\alpha}&\,X\,&+& &\,Y\,&+&(\alpha-\alpha(0))&\,Z &\,=\,&z\,C_\theta A^*k_0^\alpha-a\,\theta.
\end{alignedat}
\right.
\end{equation}
Note that the determinant of the matrix of the coefficients of the system \eqref{uk5} is $\|k_0^\theta\|^2(\alpha-\alpha(0))\neq 0$. Hence the solution of the system \eqref{uk5} is unique.

We have thus the following.

\begin{theorem}
Let $\alpha\leqslant\theta$ be nonconstant inner functions and let $A\in\mathcal{T}(\theta,\alpha)$. Suppose that a certain value is given for either $\psi(0)$ or $\chi(0)$, and let $a$, $b$, $c$ be the corresponding solutions to \eqref{uk4}. If $X, Y, Z$ satisfy the linear system \eqref{uk5},
 then  $A=\ata_{\psi+\bar\chi}$, where $\psi=X$ and $\chi=\frac{\theta}{\alpha}\bar Z+\theta\bar Y$.
\end{theorem}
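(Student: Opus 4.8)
The plan is to combine the existence of a symbol for $A$ with the uniqueness of the solution of the linear system \eqref{uk5}, which has already been established: its coefficient determinant equals $\|k_0^\theta\|^2(\alpha-\alpha(0))$, which does not vanish. The guiding idea is that any genuine symbol of $A$ produces a solution of \eqref{uk5}, so the unique solution $X,Y,Z$ is forced to reproduce that symbol.

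First I would invoke Corollary \ref{csymbol} to write $A=\ata_{\psi+\bar\chi}$ for some $\psi\in K^2_\alpha$ and $\chi\in K^2_\theta$, and decompose $\chi=\chi_{\frac{\theta}{\alpha}}+\frac{\theta}{\alpha}\chi_\alpha$ with $\chi_{\frac{\theta}{\alpha}}\in K^2_{\frac{\theta}{\alpha}}$ and $\chi_\alpha\in K^2_\alpha$. Using the freedom in Corollary \ref{csymbol} (replacing $\psi$ by $\psi+c\,k_0^\alpha$ and $\chi$ by $\chi-\bar c\,k_0^\theta$), I would normalize so that the prescribed value of $\psi(0)$ (or of $\chi(0)$) holds; since $k_0^\alpha(0)=1-|\alpha(0)|^2\neq 0$ and $k_0^\theta(0)=1-|\theta(0)|^2\neq0$, this normalization pins down the symbol uniquely. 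By Lemma \ref{l5}, the numbers $a=\psi(0)$, $b=\overline{\chi_\alpha(0)}$, $c=\overline{\chi(0)}$ are then exactly the prescribed solution of \eqref{uk4}.

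Next, the computation preceding the theorem shows that this symbol satisfies \eqref{uk2} and \eqref{uk3}, which are precisely the three equations assembled into \eqref{uk5}. Hence the triple $X_*=\psi$, $Y_*=\alpha\bar\chi_\alpha$, $Z_*=\frac{\theta}{\alpha}\bar\chi_{\frac{\theta}{\alpha}}$ solves \eqref{uk5} with these $a,b,c$. Since the system has a unique solution, any $X,Y,Z$ satisfying \eqref{uk5} must coincide with $X_*,Y_*,Z_*$. Therefore $\psi=X$, and recovering $\chi$ on $\mathbb T$ (where $|\alpha|=|\frac\theta\alpha|=1$) gives $\frac{\theta}{\alpha}\bar Z+\theta\bar Y=\frac{\theta}{\alpha}\cdot\frac{\bar\theta}{\bar\alpha}\chi_{\frac{\theta}{\alpha}}+\theta\bar\alpha\chi_\alpha=\chi_{\frac{\theta}{\alpha}}+\frac{\theta}{\alpha}\chi_\alpha=\chi$, so that $A=\ata_{\psi+\bar\chi}$ with $\psi=X$ and $\chi=\frac{\theta}{\alpha}\bar Z+\theta\bar Y$, as claimed.

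The only genuine subtlety, and the step I would be most careful about, is the consistency of the normalization: I must ensure that the values $a,b,c$ fed into \eqref{uk5} (coming from the prescribed value of $\psi(0)$ or $\chi(0)$ via \eqref{uk4}) are the same ones attached to the chosen symbol, so that the symbol's data and the system's unique solution refer to the same right-hand side. This is exactly what Lemma \ref{l5} and the remark following it guarantee. Everything else is the routine check that the formulas $\psi=X$ and $\chi=\frac{\theta}{\alpha}\bar Z+\theta\bar Y$ invert the substitutions $X=\psi$, $Y=\alpha\bar\chi_\alpha$, $Z=\frac{\theta}{\alpha}\bar\chi_{\frac{\theta}{\alpha}}$.
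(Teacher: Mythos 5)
Your proposal is correct and follows essentially the same route as the paper's own proof: invoke Corollary \ref{csymbol} to produce a symbol $\psi+\bar\chi$ normalized by the prescribed value of $\psi(0)$ or $\chi(0)$, observe that $X_*=\psi$, $Y_*=\alpha\bar\chi_\alpha$, $Z_*=\frac{\theta}{\alpha}\bar\chi_{\frac{\theta}{\alpha}}$ satisfy \eqref{uk5} by the computations \eqref{uk2}--\eqref{uk3} preceding the theorem, and conclude from the nonvanishing determinant $\|k_0^\theta\|^2(\alpha-\alpha(0))$ that the unique solution recovers the symbol. The only difference is that you make explicit two points the paper leaves implicit (the consistency of $a,b,c$ via Lemma \ref{l5} and the verification on $\mathbb{T}$ that $\frac{\theta}{\alpha}\bar Z+\theta\bar Y=\chi$), which is harmless.
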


\begin{proof}

If $A$ has the symbol $\psi+\bar\chi$ with fixed value $\psi(0)$ or $\chi(0)$, then $\psi$ and $\chi$ are uniquely determined. Since, as shown above,
$X=\psi$, $Y=\alpha\bar\chi_\alpha$, $Z=\frac{\theta}{\alpha}\bar\chi_{\frac{\theta}{\alpha}}$ satisfy the system \eqref{uk5}, the result follows from the uniqueness of the solution to that system.
\end{proof}

The characterizations of asymmetric truncated Toeplitz operators in terms of operators of rank two at most, obtained in prevoius sections, allow us also to obtain a symbol for the operator.

In fact, regarding the first characterization, it follows from the proof of Theorem \ref{charakt} that, if $A$ is a bounded operator and satisfies the equality \eqref{chaar}, then $A=\ata_{\psi+\bar\chi}$. Remark that by Corollary \ref{csymbol} we know that $\psi$ and $\chi$ are not unique and we can adjust the value of either $\psi$ or $\chi$ at the origin.

For $\alpha=\theta$ the characterization \eqref{cstar} of truncated Toeplitz operators in Theorem \ref{char-sp} reduces to Sarason's (\cite[Remark, p. 501]{Sarason}). In that case the relation between $\psi,\chi$ in the symbol of $A^\theta_{\psi+\bar\chi}$ and $\mu$, $\nu$ is given by the conjugation $C_\theta$,  namely $\mu=C_\theta\chi$ and $\nu=C_\theta\psi$. Thus  one can also immediately associate a symbol of the form $\psi+\bar\chi$ to  a truncated Toeplitz operator satisfying that equality. In the asymmetric case, however, Theorem \ref{char-sp} unveils a more complex connection between the rank-two operator on the right-hand side of \eqref{cstar} and the symbols of $\ata_{\psi+\bar\chi}$,
and finding a symbol in terms of $\mu$ and $\nu$ for an operator $A$ satisfying equality \eqref{cstar} is more difficult.

 To solve  that problem in the case of asymmetric truncated Toeplitz operators we start with two auxiliary results.

\begin{lemma}\label{ni 1} Let $\psi\in K^2_\alpha\,,\,\chi\in K^2_\theta$.
Assume that
$\chi=\chi_{\frac{\theta}{\alpha}}+\frac{\theta}{\alpha}\chi_{\alpha}$ according to the decomposition $K^2_\theta=K^2_{\frac{\theta}{\alpha}}\oplus \frac{\theta}{\alpha}K^2_\alpha$. If
\begin{equation}\label{6.5a}
\mu=C_\alpha P_\alpha (\tfrac{\bar\theta}{\bar\alpha}\,\chi)+\bar b \tilde k_0^\alpha,\quad\nu=C_\alpha \psi + S^*(\alpha P_{\frac{\theta}{\alpha}}\,\chi)-b\tilde k_0^\theta
\end{equation}
for fixed $b\in \mathbb{C}$, then
\begin{equation}\label{6.5b}
\aligned
\psi&=C_\alpha\nu_\alpha-\big(\overline{\chi_{\frac{\theta}{\alpha}}(0)}-\bar b\, \overline{\tfrac \theta\alpha (0)}\big)k_0^{\alpha},\quad\\
 \chi_\alpha&=C_\alpha\mu-b k_0^\alpha\,,\quad  \chi_{\frac{\theta}{\alpha}}=S_{\frac{\theta}{\alpha}}\nu_{\frac{\theta}{\alpha}}+\big(\chi_{\frac{\theta}{\alpha}}(0) -b\tfrac\theta\alpha(0)\big)k_0^{\frac{\theta}{\alpha}},
\endaligned
\end{equation}
 where $\nu=\nu_\alpha+\alpha\nu_{\frac{\theta}{\alpha}}$ according to the decomposition  $ K^2_\theta=K^2_{\alpha}\oplus{\alpha} K^2_{\frac{\theta}{\alpha}}$.
\end{lemma}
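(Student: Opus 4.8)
The plan is to invert the defining relations \eqref{6.5a} for $\mu$ and $\nu$, recovering $\psi$, $\chi_\alpha$ and $\chi_{\frac\theta\alpha}$ by exploiting the two orthogonal decompositions of $K^2_\theta$ together with the antilinearity of $C_\alpha$. First I would note that the expression for $\mu$ simplifies: by Corollary \ref{wn2}(1) (equivalently Proposition \ref{kzero}(4)) one has $C_\alpha P_\alpha(\tfrac{\bar\theta}{\bar\alpha}\chi)=C_\alpha\chi_\alpha$, so that $\mu=C_\alpha\chi_\alpha+\bar b\,\tilde k_0^\alpha$ with both summands in $K^2_\alpha$. Applying the involution $C_\alpha$ and using $C_\alpha\tilde k_0^\alpha=k_0^\alpha$ (and antilinearity, which sends $\bar b$ to $b$) gives at once $\chi_\alpha=C_\alpha\mu-b\,k_0^\alpha$, the second identity in \eqref{6.5b}.

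The substantial step is to split $\nu$ according to $K^2_\theta=K^2_\alpha\oplus\alpha K^2_{\frac\theta\alpha}$. Since $P_{\frac\theta\alpha}\chi=\chi_{\frac\theta\alpha}$ and $\alpha\chi_{\frac\theta\alpha}\in K^2_\theta$, we have $S^*(\alpha P_{\frac\theta\alpha}\chi)=S^*_\theta(\alpha\chi_{\frac\theta\alpha})$, whose two components are given by Lemma \ref{l4}(3)--(4) as $\chi_{\frac\theta\alpha}(0)\,\tilde k_0^\alpha\in K^2_\alpha$ and $\alpha S^*\chi_{\frac\theta\alpha}\in\alpha K^2_{\frac\theta\alpha}$. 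Combining this with the expansion $\tilde k_0^\theta=\frac\theta\alpha(0)\tilde k_0^\alpha+\alpha\tilde k_0^{\frac\theta\alpha}$ from Proposition \ref{cos}(4) and with $C_\alpha\psi\in K^2_\alpha$, I would read off the two components of $\nu=\nu_\alpha+\alpha\nu_{\frac\theta\alpha}$, namely
\[
\nu_\alpha=C_\alpha\psi+\big(\chi_{\frac\theta\alpha}(0)-b\tfrac\theta\alpha(0)\big)\tilde k_0^\alpha,\qquad \nu_{\frac\theta\alpha}=S^*\chi_{\frac\theta\alpha}-b\,\tilde k_0^{\frac\theta\alpha}.
\]
Applying $C_\alpha$ to the first of these, and again converting the scalar by antilinearity, recovers $\psi=C_\alpha\nu_\alpha-\big(\overline{\chi_{\frac\theta\alpha}(0)}-\bar b\,\overline{\tfrac\theta\alpha(0)}\big)k_0^\alpha$, the first identity in \eqref{6.5b}.

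For the last identity I would invert the relation $\nu_{\frac\theta\alpha}=S^*_{\frac\theta\alpha}\chi_{\frac\theta\alpha}-b\,\tilde k_0^{\frac\theta\alpha}$ by means of the defect identity $S_{\frac\theta\alpha}S^*_{\frac\theta\alpha}=I-k_0^{\frac\theta\alpha}\otimes k_0^{\frac\theta\alpha}$, which yields $\chi_{\frac\theta\alpha}=S_{\frac\theta\alpha}S^*_{\frac\theta\alpha}\chi_{\frac\theta\alpha}+\chi_{\frac\theta\alpha}(0)k_0^{\frac\theta\alpha}$; substituting and using $S_{\frac\theta\alpha}\tilde k_0^{\frac\theta\alpha}=-\frac\theta\alpha(0)k_0^{\frac\theta\alpha}$ from \eqref{D} produces $\chi_{\frac\theta\alpha}=S_{\frac\theta\alpha}\nu_{\frac\theta\alpha}+\big(\chi_{\frac\theta\alpha}(0)-b\tfrac\theta\alpha(0)\big)k_0^{\frac\theta\alpha}$. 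The computations are routine once these tools are assembled; the point demanding care, and the main obstacle, is keeping the two distinct decompositions of $K^2_\theta$ straight — $\nu$ is expanded in $K^2_\alpha\oplus\alpha K^2_{\frac\theta\alpha}$ while $\chi$ is given in $K^2_{\frac\theta\alpha}\oplus\frac\theta\alpha K^2_\alpha$ — and handling the antilinearity of $C_\alpha$ consistently so that the conjugate scalars appear correctly. A further subtlety worth noting is that the constant $\chi_{\frac\theta\alpha}(0)$ surviving on the right-hand sides is not to be eliminated: it is genuine data of the (non-unique) symbol, so the identities are consistency relations rather than closed-form inversions and I would not attempt to push them further.
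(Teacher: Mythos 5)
Your proof is correct and follows essentially the same route as the paper: identify the components of $\nu$ in the decomposition $K^2_\theta=K^2_\alpha\oplus\alpha K^2_{\frac{\theta}{\alpha}}$, apply the involution $C_\alpha$ to recover $\psi$ and $\chi_\alpha$, and invert $\nu_{\frac{\theta}{\alpha}}=S^*_{\frac{\theta}{\alpha}}\chi_{\frac{\theta}{\alpha}}-b\tilde k_0^{\frac{\theta}{\alpha}}$ via the defect identity $I-S_{\frac{\theta}{\alpha}}S^*_{\frac{\theta}{\alpha}}=k_0^{\frac{\theta}{\alpha}}\otimes k_0^{\frac{\theta}{\alpha}}$ together with \eqref{D}. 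The only cosmetic difference is that you cite Lemma \ref{l4}(3)--(4) and Corollary \ref{wn2}(1) for the projection computations that the paper redoes by hand from Proposition \ref{proj}, which is perfectly legitimate since those results precede this lemma.
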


\begin{proof}
Let us calculate using Proposition \ref{proj}
\begin{align*}
P_{\alpha}(S^*(\alpha\chi_{\frac{\theta}{\alpha}}))&=
P_{\alpha}(\bar{z}(\alpha\chi_{\frac{\theta}{\alpha}}-
\alpha(0)\chi_{\frac{\theta}{\alpha}}(0)))=P_\alpha(\bar z\alpha\chi_{\frac{\theta}{\alpha}})\\ &=
\alpha P^-\bar{\alpha}(\bar{z}\alpha\chi_{\frac{\theta}{\alpha}})-
\alpha(0)\chi_{\frac{\theta}{\alpha}}(0)\bar{z}\\
&=\alpha P^-(\bar{z}\chi_{\frac{\theta}{\alpha}})-\alpha(0)\chi_{\frac{\theta}{\alpha}}(0)\bar{z}=(\alpha\chi_{\frac{\theta}{\alpha}}(0)-
\alpha(0)\chi_{\frac{\theta}{\alpha}}(0))\bar{z}\\&=\chi_{\frac{\theta}{\alpha}}(0)\tilde{k}_0^{\alpha}.
\end{align*}
Hence, since $C_\alpha \psi\in K^2_\alpha$ and $\tilde k_0^\theta=\frac{\theta}{\alpha}(0)\tilde k_0^\alpha+\alpha \tilde k_0^{\frac{\theta}{\alpha}}$ (Proposition \ref{cos}), we have $$P_{\alpha}\nu=C_{\alpha}\psi+\big(\chi_{\frac{\theta}{\alpha}}(0)-b\tfrac{\theta}{\alpha}(0)\big)\tilde{k}_0^{\alpha},$$ and thus
\begin{equation}\label{psial}
\psi=C_{\alpha}P_{\alpha}\nu-
C_{\alpha}\big((\chi_{\frac{\theta}{\alpha}}(0)-b\tfrac{\theta}{\alpha}(0))\tilde{k}_0^{\alpha}\big)=
C_\alpha\nu_\alpha-\big(\overline{\chi_{\frac{\theta}{\alpha}}(0)}-\bar b\overline{\tfrac{\theta}{\alpha}(0)}\big)k_0^{\alpha}.\end{equation}

  Now let us consider $\chi$.  Since $\mu=P_{\alpha}C_{\alpha}(\chi \frac{\bar{\theta}}{\bar{\alpha}})+\bar b\tilde k_0^\alpha=C_\alpha\chi_\alpha+\bar b\tilde k_0^\alpha$, then \begin{equation}\label{nial}\chi_\alpha=C_\alpha(\mu-\bar b\tilde k_0^\alpha)=C_\alpha\mu-bk_0^\alpha.\end{equation} On the other hand,  by Proposition \ref{proj}
 \begin{equation}\label{nial1}
 \begin{alignedat}{12} \nu_{\frac{\theta}{\alpha}}=&P_{\frac{\theta}{\alpha}}(\bar{\alpha}\nu)=
P_{\frac{\theta}{\alpha}}(\bar{\alpha}S^*(\alpha\chi_{\frac{\theta}{\alpha}})-b\bar\alpha\tilde k_0^{\theta})=
P_{\frac{\theta}{\alpha}}(\bar{\alpha}\bar{z}(\alpha\chi_{\frac{\theta}{\alpha}}-
\alpha(0)\chi_{\frac{\theta}{\alpha}}(0)))-b\tilde k_0^{\frac{\theta}{\alpha}}\\=&
P_{\frac{\theta}{\alpha}}(\bar{z}\chi_{\frac{\theta}{\alpha}})-
\alpha(0)\chi_{\frac{\theta}{\alpha}}(0)P_{\frac{\theta}{\alpha}}(\bar{\alpha}\bar{z})-b\tilde k_0^{\frac{\theta}{\alpha}}=
P_{\frac{\theta}{\alpha}}(\bar{z}\chi_{\frac{\theta}{\alpha}})-b\tilde k_0^{\frac{\theta}{\alpha}}\\=&\tfrac{\theta}{\alpha}P^-(\tfrac{\bar\theta}{\bar\alpha}\bar z\chi_{\frac{\theta}{\alpha}})-P^-(\bar z\chi_{\frac{\theta}{\alpha}})-b\tilde k_0^{\frac{\theta}{\alpha}}=\bar z\chi_{\frac{\theta}{\alpha}}-\bar z\chi_{\frac{\theta}{\alpha}}(0)-b\tilde k_0^{\frac{\theta}{\alpha}}=S^*_{\frac{\theta}{\alpha}}\chi_{\frac{\theta}{\alpha}}-b\tilde k_0^{\frac{\theta}{\alpha}},\end{alignedat}\end{equation}
since $\tfrac{\bar\theta}{\bar\alpha}\chi_{\frac{\theta}{\alpha}}\perp H^2$.
 Hence, using a proper defect operator and \eqref{D}, we obtain
 \begin{multline}\label{chita}
 \chi_{\frac{\theta}{\alpha}}=(S_{\frac{\theta}{\alpha}}S^*_{\frac{\theta}{\alpha}}+k_0^{\frac{\theta}{\alpha}}\otimes k_0^{\frac{\theta}{\alpha}})\chi_{\frac{\theta}{\alpha}}
 = S_{\frac{\theta}{\alpha}}S^*_{\frac{\theta}{\alpha}}\chi_{\frac{\theta}{\alpha}}+(k_0^{\frac{\theta}{\alpha}}\otimes k_0^{\frac{\theta}{\alpha}}) \chi_{\frac{\theta}{\alpha}}\\
 S_{\frac{\theta}{\alpha}}\nu_{\frac{\theta}{\alpha}}+bS_{\frac{\theta}{\alpha}}\tilde k_0^{\frac{\theta}{\alpha}}+\chi_{\frac{\theta}{\alpha}}(0) k_0^{\frac{\theta}{\alpha}}=\big(\chi_{\frac{\theta}{\alpha}}(0)-b\tfrac{\theta}{\alpha}(0)\big) k_0^{\frac{\theta}{\alpha}}+S_{\frac{\theta}{\alpha}}\nu_{\frac{\theta}{\alpha}}.
 \end{multline}
\end{proof}

\begin{lemma}\label{ni}
Let $A\in\mathcal{T}(\theta,\alpha)$ satisfy the equation
\begin{equation*}A-S_\alpha^* A S_\theta=\mu\otimes \tilde{k}_0^\theta+\tilde{k}_0^\alpha\otimes \nu\end{equation*} for $\mu\in K^2_\alpha$, $\nu\in K^2_\theta$. Then $\mu$ and $\nu$ can be chosen such that $P_{\frac{\theta}{\alpha}}(\bar\alpha\nu)$ is orthogonal to $\tilde k_0^{\frac{\theta}{\alpha}}$. In this case, $\mu$ and $\nu$ are uniquely determined.
\end{lemma}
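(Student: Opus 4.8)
The plan is to reduce the orthogonality condition to a statement about a single scalar and then to exploit the fact that the representation \eqref{cstar} is unique up to the one-parameter family \eqref{6.2a}. First I would decompose $\nu=\nu_\alpha+\alpha\nu_{\frac{\theta}{\alpha}}$ along $K^2_\theta=K^2_\alpha\oplus\alpha K^2_{\frac{\theta}{\alpha}}$, with $\nu_\alpha\in K^2_\alpha$ and $\nu_{\frac{\theta}{\alpha}}\in K^2_{\frac{\theta}{\alpha}}$. Then $\bar\alpha\nu=\bar\alpha\nu_\alpha+\nu_{\frac{\theta}{\alpha}}$, and since $\nu_\alpha\in K^2_\alpha$ forces $\bar\alpha\nu_\alpha\in\overline{H^2_0}$ by \eqref{A}, the projection $P_{\frac{\theta}{\alpha}}$ annihilates $\bar\alpha\nu_\alpha$; hence $P_{\frac{\theta}{\alpha}}(\bar\alpha\nu)=\nu_{\frac{\theta}{\alpha}}$. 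Thus the requested condition is simply that the $K^2_{\frac{\theta}{\alpha}}$-component of $\nu$ be orthogonal to $\tilde k_0^{\frac{\theta}{\alpha}}$, and the whole problem reduces to choosing that component appropriately.

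Next I would pin down exactly how much freedom there is in the pair $(\mu,\nu)$. We already know from \eqref{6.2a} that replacing $(\mu,\nu)$ by $(\mu+\bar b\,\tilde k_0^\alpha,\ \nu-b\,\tilde k_0^\theta)$ preserves \eqref{cstar} for every $b\in\mathbb{C}$. I would show this is the \emph{only} freedom: if two admissible pairs yield the same operator, their difference $(\delta\mu,\delta\nu)$ satisfies $\delta\mu\otimes\tilde k_0^\theta=-\tilde k_0^\alpha\otimes\delta\nu$. Evaluating both rank-one operators on a vector $z$ with $\langle z,\tilde k_0^\theta\rangle\ne 0$ forces $\delta\mu$ to be a scalar multiple of $\tilde k_0^\alpha$, say $\delta\mu=\bar b\,\tilde k_0^\alpha$; comparing the two sides on an arbitrary $z$ then gives $\langle z,\,b\,\tilde k_0^\theta+\delta\nu\rangle=0$ for all $z\in K^2_\theta$, whence $\delta\nu=-b\,\tilde k_0^\theta$. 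Therefore the admissible pairs are exactly the family \eqref{6.2a}.

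Finally, using Proposition \ref{cos}(4), $\tilde k_0^\theta=\tfrac{\theta}{\alpha}(0)\tilde k_0^\alpha+\alpha\tilde k_0^{\frac{\theta}{\alpha}}$, so under the substitution $\nu\mapsto\nu-b\,\tilde k_0^\theta$ the component along $\alpha K^2_{\frac{\theta}{\alpha}}$ transforms as $\nu_{\frac{\theta}{\alpha}}\mapsto\nu_{\frac{\theta}{\alpha}}-b\,\tilde k_0^{\frac{\theta}{\alpha}}$, i.e. $P_{\frac{\theta}{\alpha}}(\bar\alpha\nu)\mapsto P_{\frac{\theta}{\alpha}}(\bar\alpha\nu)-b\,\tilde k_0^{\frac{\theta}{\alpha}}$. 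Imposing orthogonality to $\tilde k_0^{\frac{\theta}{\alpha}}$ then reads
$$\langle P_{\frac{\theta}{\alpha}}(\bar\alpha\nu),\tilde k_0^{\frac{\theta}{\alpha}}\rangle-b\,\|\tilde k_0^{\frac{\theta}{\alpha}}\|^2=0,$$
which, since $\tfrac{\theta}{\alpha}$ is a nonconstant inner function and hence $\tilde k_0^{\frac{\theta}{\alpha}}\ne 0$, determines $b$ uniquely as $b=\langle P_{\frac{\theta}{\alpha}}(\bar\alpha\nu),\tilde k_0^{\frac{\theta}{\alpha}}\rangle\,\|\tilde k_0^{\frac{\theta}{\alpha}}\|^{-2}$. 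Existence and uniqueness of the required $(\mu,\nu)$ follow at once. I expect the only genuinely delicate point to be the verification that the family \eqref{6.2a} exhausts all admissible pairs; once that rigidity of the rank-two representation is established, the remainder is the elementary one-variable computation above.
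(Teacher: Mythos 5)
Your proposal is correct and follows essentially the same route as the paper: both exploit the one-parameter freedom \eqref{6.2a}, use Proposition \ref{cos}(4) to see that the shift $\nu\mapsto\nu-b\,\tilde k_0^\theta$ moves the $K^2_{\frac{\theta}{\alpha}}$-component by $-b\,\tilde k_0^{\frac{\theta}{\alpha}}$, solve the resulting scalar equation for the unique $b$, and settle uniqueness by the rank-one rigidity of $\mu\otimes\tilde k_0^\theta+\tilde k_0^\alpha\otimes\nu=0$. The only difference is organizational — you establish the rigidity of the representation first and then solve for $b$, while the paper constructs $(\mu',\nu')$ explicitly and proves uniqueness afterwards — but the underlying computations are identical.
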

\begin{proof}
According to the decomposition $K^2_\theta=K^2_\alpha\oplus\alpha K^2_{\frac{\theta}{\alpha}}$ we can write $\nu=P_\alpha\nu+\alpha P_{\frac{\theta}{\alpha}}\bar\alpha\nu=\nu_\alpha+\alpha \nu_{\frac{\theta}{\alpha}}$.
 By Proposition \ref{cos} recall that $\tilde k_0^\theta=\frac{\theta}{\alpha}(0)\tilde k_0^\alpha+\alpha\tilde k_0^{\frac{\theta}{\alpha}}$.
Let us define $$\mu'=\mu+\|\tilde k_0^{\frac{\theta}{\alpha}}\|^{-2}\overline{\langle \nu_{\frac{\theta}{\alpha}},\tilde k_0^{\frac{\theta}{\alpha}}\rangle} \tilde k_0^\alpha,\quad
\nu'=\nu-\|\tilde k_0^{\frac{\theta}{\alpha}}\|^{-2}\langle \nu_{\frac{\theta}{\alpha}},\tilde k_0^{\frac{\theta}{\alpha}}\rangle \tilde k_0^\theta$$ (see \eqref{6.2a}). We have

\begin{equation*}A-S_\alpha^* A S_\theta=\mu'\otimes \tilde{k}_0^\theta+\tilde{k}_0^\alpha\otimes \nu',\qquad \mu'\in K^2_\alpha,\quad \nu'\in K^2_\theta.\end{equation*}
Moreover, $\nu'_{\frac{\theta}{\alpha}}$ is orthogonal to $\tilde k_0^{\frac{\theta}{\alpha}}$, since
$$\langle \nu'_{\frac{\theta}{\alpha}},\tilde k_0^{\frac\theta\alpha} \rangle= \langle \nu_{\frac{\theta}{\alpha}}-\|\tilde k_0^{\frac{\theta}{\alpha}}\|^{-2}\langle \nu_{\frac{\theta}{\alpha}},\tilde k_0^{\frac{\theta}{\alpha}}\rangle \tilde k_0^{\frac{\theta}{\alpha}},\tilde k_0^{\frac{\theta}{\alpha}} \rangle=0.$$
To show uniqueness it is enough to note that
$\mu\otimes \tilde k_0^\theta+\tilde k_0^\alpha\otimes \nu=0$ if and only if $\mu=0$ and $\nu=0$. Assume then that
$\mu\otimes \tilde k_0^\theta+\tilde k_0^\alpha\otimes \nu=0$, which is equivalent to
 $\mu=c_1\tilde k_0^\alpha$, $\nu=c_2\tilde k_0^\theta$ and $c_1+\bar c_2=0$ for $c_1, c_2\in\mathbb{C}$. Then by Proposition \ref{cos} (4) $$P_{\frac{\theta}{\alpha}}\bar \alpha\nu=c_2P_{\frac{\theta}{\alpha}}\bar\alpha \tilde k_0^\theta=c_2\tilde k_0^{\frac{\theta}{\alpha}}.$$
Since $P_{\frac{\theta}{\alpha}}\bar \alpha\nu\perp\tilde k_0^{\frac{\theta}{\alpha}}$, we obtain that $c_2=0$, which implies $c_1=0$. Hence $\mu=0$ and $\nu=0$.
\end{proof}


When investigating  symbols of the asymmetric truncated Toeplitz operator, it is worth to have in mind
 Corollary \ref{csymbol} saying that it is enough to find one of them. 

\begin{theorem}\label{pr1}
Let $\alpha, \theta$ be nonconstant inner functions such that $\alpha \leqslant \theta$ and let $A$ be a bounded operator satisfying
\begin{equation}\label{cstar1}A-S_\alpha^* A S_\theta=\mu\otimes \tilde{k}_0^\theta+\tilde{k}_0^\alpha\otimes \nu\end{equation}
for $\mu\in K^2_\alpha$, $\nu\in K^2_\theta$. Then $A=\ata_{\psi+\bar\chi}$, where
\begin{align*}
\psi=&C_\alpha P_\alpha(\nu- c \tilde k_0^\theta)= C_\alpha P_\alpha\nu-\bar c\,\overline{ \tfrac{\theta}{\alpha}(0)} k_0^\alpha\in K^2_\alpha\quad\text{and} \\ \chi=&S_{\frac{\theta}{\alpha}} P_{\frac{\theta}{\alpha}}\bar\alpha(\nu- c\, \tilde k_0^\theta)+\tfrac{\theta}{\alpha}C_\alpha(\mu+\bar c\tilde k_0^\alpha)\\ =&
(S_{\frac{\theta}{\alpha}} P_{\frac{\theta}{\alpha}}\bar\alpha\nu+ c\,\tfrac{ \theta}{\alpha}(0) k_0^{\frac{\theta}{\alpha}})+\tfrac{\theta}{\alpha}(C_\alpha\mu+ c\, k_0^\alpha)\in K^2_\theta=K^2_{\frac{\theta}{\alpha}}\oplus \tfrac{\theta}{\alpha}K^2_\alpha\quad\text{with}
\\ c=&\langle P_{\frac{\theta}{\alpha}}\bar\alpha\nu,\tilde k_0^{\frac{\theta}{\alpha}}\rangle \|\tilde k_0^{\frac{\theta}{\alpha}}\|^{-2}.
\end{align*}
\end{theorem}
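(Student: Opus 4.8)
The plan is to read the statement as an explicit inversion of the symbol-to-$(\mu,\nu)$ correspondence \eqref{symbol}, feeding the two auxiliary lemmas with the right representatives. First I would apply Theorem \ref{char-sp}: since the bounded operator $A$ satisfies \eqref{cstar1} with $\mu\in K^2_\alpha$ and $\nu\in K^2_\theta$, it lies in $\tcalta$, so $A=\ata_{\psi+\bar\chi}$ for some $\psi\in K^2_\alpha$, $\chi\in K^2_\theta$; this is what guarantees that the operator we are about to write down is bounded. It then remains only to express one admissible symbol through the given $\mu,\nu$. As the symbol is not unique, by Corollary \ref{csymbol} I would first replace it as in \eqref{6.2b} so as to arrange $\chi_{\frac\theta\alpha}(0)=0$. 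This is possible because $P_{\frac\theta\alpha}k_0^\theta=k_0^{\frac\theta\alpha}$ (indeed $P_{\frac\theta\alpha}1=k_0^{\frac\theta\alpha}$ and $P_{\frac\theta\alpha}\theta=0$ since $\theta\in\frac\theta\alpha H^2$) and $k_0^{\frac\theta\alpha}(0)=1-|\tfrac\theta\alpha(0)|^2\neq0$, so the free defect constant can be chosen to cancel $\chi_{\frac\theta\alpha}(0)$; here I use that $\tfrac\theta\alpha$ is nonconstant, which is exactly what makes the constant $c$ of the statement well defined.

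Next I would identify which representative of $(\mu,\nu)$ matches this normalized symbol. For the normalized symbol, \eqref{symbol} gives $\mu_0=C_\alpha\chi_\alpha$ and $\nu_0=C_\alpha\psi+S^*_\theta(\alpha\chi_{\frac\theta\alpha})$, and the computation in the proof of Lemma \ref{ni 1} (with $b=0$) yields $P_{\frac\theta\alpha}(\bar\alpha\nu_0)=S^*_{\frac\theta\alpha}\chi_{\frac\theta\alpha}$, whence $\langle P_{\frac\theta\alpha}(\bar\alpha\nu_0),\tilde k_0^{\frac\theta\alpha}\rangle=-\overline{\tfrac\theta\alpha(0)}\,\chi_{\frac\theta\alpha}(0)=0$. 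Thus $(\mu_0,\nu_0)$ already satisfies the orthogonality condition of Lemma \ref{ni}. Starting instead from the given $\mu,\nu$, Lemma \ref{ni} produces the orthogonal representative $\mu'=\mu+\bar c\,\tilde k_0^\alpha$, $\nu'=\nu-c\,\tilde k_0^\theta$ with precisely the constant $c=\langle P_{\frac\theta\alpha}\bar\alpha\nu,\tilde k_0^{\frac\theta\alpha}\rangle\|\tilde k_0^{\frac\theta\alpha}\|^{-2}$. Both pairs define the same rank-two operator and both satisfy the orthogonality condition, so the uniqueness half of Lemma \ref{ni} forces $(\mu',\nu')=(\mu_0,\nu_0)$.

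Finally I would read off the symbol from Lemma \ref{ni 1}. Applying \eqref{6.5b} with $b=0$ and $\chi_{\frac\theta\alpha}(0)=0$ to the pair $(\mu_0,\nu_0)=(\mu',\nu')$ collapses the formulas to $\psi=C_\alpha\nu'_\alpha$, $\chi_\alpha=C_\alpha\mu'$ and $\chi_{\frac\theta\alpha}=S_{\frac\theta\alpha}\nu'_{\frac\theta\alpha}$. Substituting $\mu'=\mu+\bar c\,\tilde k_0^\alpha$, $\nu'=\nu-c\,\tilde k_0^\theta$ and simplifying by means of Proposition \ref{cos}(4)--(5) and \eqref{D} (namely $C_\alpha\tilde k_0^\alpha=k_0^\alpha$, $P_{\frac\theta\alpha}\bar\alpha\tilde k_0^\theta=\tilde k_0^{\frac\theta\alpha}$ and $S_{\frac\theta\alpha}\tilde k_0^{\frac\theta\alpha}=-\tfrac\theta\alpha(0)k_0^{\frac\theta\alpha}$) turns these into exactly the stated expressions for $\psi$ and $\chi$, so that $A=\ata_{\psi+\bar\chi}$.

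I expect the main obstacle to be tracking the two independent non-uniqueness freedoms simultaneously—the symbol freedom \eqref{6.2b} and the $(\mu,\nu)$-freedom \eqref{6.2a}, linked through Corollary \ref{wn3}—and checking that the single constant $c$ selects at once the orthogonal representative of $(\mu,\nu)$ and the symbol normalized by $\chi_{\frac\theta\alpha}(0)=0$. A more computational alternative sidesteps the normalization: define $\psi,\chi$ directly by the stated formulas, and verify via Lemma \ref{l4}(3)--(4), the defect identity $S^*_{\frac\theta\alpha}S_{\frac\theta\alpha}=I-\tilde k_0^{\frac\theta\alpha}\otimes\tilde k_0^{\frac\theta\alpha}$ and the relation $\|\tilde k_0^{\frac\theta\alpha}\|^2+|\tfrac\theta\alpha(0)|^2=1$ that $C_\alpha\chi_\alpha=\mu+\bar c\,\tilde k_0^\alpha$ and $C_\alpha\psi+S^*_\theta(\alpha\chi_{\frac\theta\alpha})=\nu-c\,\tilde k_0^\theta$. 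Then $\ata_{\psi+\bar\chi}$ satisfies \eqref{cstar1} with the same right-hand side as $A$ (the freedom \eqref{6.2a} leaving that side unchanged), so $B=A-\ata_{\psi+\bar\chi}$ obeys $B=S^*_\alpha BS_\theta$, and iterating with $\|S_\theta^n f\|\to0$ gives $B=0$. In this route the whole weight rests on the cancellation produced by $\|\tilde k_0^{\frac\theta\alpha}\|^2+|\tfrac\theta\alpha(0)|^2=1$.
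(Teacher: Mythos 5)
Your proposal is correct and follows essentially the paper's own argument: membership in $\tcalta$ via Theorem \ref{char-sp}, identification of the two orthogonal representatives through the uniqueness part of Lemma \ref{ni}, and recovery of the symbol through Lemma \ref{ni 1}. The only difference is cosmetic — you normalize the symbol (via Corollary \ref{csymbol}) so that $\chi_{\frac{\theta}{\alpha}}(0)=0$, which lets you take $b=0$ and makes the formulas of Lemma \ref{ni 1} collapse immediately, whereas the paper keeps the symbol arbitrary and instead chooses the nonzero constant $b=-\chi_{\frac{\theta}{\alpha}}(0)\,\overline{\tfrac{\theta}{\alpha}(0)}\,\|\tilde k_0^{\frac{\theta}{\alpha}}\|^{-2}$ on the $(\mu,\nu)$ side, cleaning up the resulting constant multiples of $k_0^\alpha$ and $k_0^\theta$ at the end; both normalizations lead to the same identification and the same final formulas.
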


\begin{proof}
Let $\mu_0=\mu+\bar c\,\tilde k_0^\alpha$, $\nu_0=\nu- c \,\tilde k_0^\theta$ with $c=\langle P_{\frac{\theta}{\alpha}}\bar\alpha\nu,\tilde k_0^{\frac{\theta}{\alpha}}\rangle \|\tilde k_0^{\frac{\theta}{\alpha}}\|^{-2}.$ We have $P_{\frac{\theta}{\alpha}}(\bar\alpha\nu_0)\bot\tilde k_0^{\frac{\theta}{\alpha}}$,  and \eqref{cstar1} is also satisfied, i.e.,
\begin{equation}\label{6.6a}A-S_\alpha^* A S_\theta=\mu_0\otimes \tilde{k}_0^\theta+\tilde{k}_0^\alpha\otimes \nu_0.\end{equation}
On the other hand,  $A\in \mathcal{T}(\theta,\alpha)$ so $A=A^{\theta,\alpha}_{\psi+\bar\chi}$ with $\psi\in K^2_\alpha$ and $\chi\in K^2_\theta$ and, by the first part of the proof of Theorem \ref{char-sp}, especially \eqref{symbol}, and \eqref{6.2a}, there are also $\mu'=C_{\alpha}P_{\alpha} (\chi\tfrac{\bar{\theta}}{\bar{\alpha}})+\bar b\tilde k_0^\alpha\in K^2_\alpha$ and $\nu'=C_{\alpha} \psi+S^*_\theta (\alpha P_{\frac{\theta}{\alpha}}\chi)-b\tilde k_0^\theta\in K^2_\theta$ ($b\in\mathbb{C}$) such that
\begin{equation}\label{6.6b}A-S_\alpha^* A S_\theta=\mu'\otimes \tilde{k}_0^\theta+\tilde{k}_0^\alpha\otimes \nu'.\end{equation}
Take
\[b=-{{\chi_{\frac{\theta}{\alpha}}(0)}\  \overline{\tfrac{\theta}{\alpha}(0)}}\ {\|\tilde k_0^{\frac{\theta}{\alpha}}\|^{-2}}\,,\]
where $\chi_{\frac{\theta}{\alpha}}=P_{\frac{\theta}{\alpha}}\chi$. Using calculations as in \eqref{nial1},
we have $P_{\frac{\theta}{\alpha}}(\bar\alpha(\nu'- b\,\tilde k_0^\theta))\,\bot\,\tilde k_0^{\frac{\theta}{\alpha}}$, because
\begin{multline*}
\langle P_{\frac{\theta}{\alpha}}\bar\alpha(\nu'- b\,\tilde k_0^\theta),\tilde k^{\frac{\theta}{\alpha}}_0\rangle=\langle S^*\chi_{\frac{\theta}{\alpha}},\tilde k^{\frac{\theta}{\alpha}}_0\rangle- b\langle P_{\frac{\theta}{\alpha}}\bar\alpha \tilde k_0^{\theta},\tilde k_0^{\frac{\theta}{\alpha}}\rangle\\=\langle \bar z(\chi_{\frac{\theta}{\alpha}}-\chi_{\frac{\theta}{\alpha}}(0)),\bar z(\tfrac{\theta}{\alpha}-\tfrac{\theta}{\alpha}(0))\rangle- b\langle \tilde k_0^{\frac{\theta}{\alpha}},\tilde k_0^{\frac{\theta}{\alpha}}\rangle=-\chi_{\frac{\theta}{\alpha}}(0)\overline{\tfrac{\theta}{\alpha}(0)}- b\|\tilde k_0^{\frac{\theta}{\alpha}}\|^2=0.\end{multline*}
Hence, by Lemma \ref{ni}, $\mu_0=\mu'$ and $\nu_0=\nu'$. On the other hand, by Lemma \ref{ni 1}
\begin{equation*}\psi=C_\alpha P_\alpha\nu_0-\big(\overline{\chi_{\frac{\theta}{\alpha}}(0)}+\overline{\chi_{\frac{\theta}{\alpha}}(0)}|\tfrac{\theta}{\alpha}(0)|^2 \|\tilde k_0^{\frac{\theta}{\alpha}}\|^{-2}\big)k_0^\alpha=C_\alpha P_\alpha\nu_0-
\overline{\chi_{\frac{\theta}{\alpha}}(0)} \|\tilde k_0^{\frac{\theta}{\alpha}}\|^{-2}k_0^\alpha
\end{equation*}
 and
\begin{multline*}
\chi=S_{\frac{\theta}{\alpha}} P_{\frac{\theta}{\alpha}}(\bar\alpha\nu_0)+\chi_{\frac{\theta}{\alpha}}(0)\big(1+\tfrac{\big| \tfrac{\theta}{\alpha}(0)\big|^2}{\|\tilde k_0^{\frac{\theta}{\alpha}}\|^{2}}\big)k_0^{{\frac{\theta}{\alpha}}}+{\tfrac{\theta}{\alpha}}\big(C_\alpha\mu_0+
\chi_{\frac{\theta}{\alpha}}(0)\tfrac{\overline{\tfrac\theta\alpha(0)}}{\|\tilde k_0^{\frac{\theta}{\alpha}}\|^{2}} k_0^\alpha\big)
\\=S_{\frac{\theta}{\alpha}} P_{\frac{\theta}{\alpha}}(\bar\alpha\nu_0)+\chi_{\frac{\theta}{\alpha}}(0)\|\tilde k_0^{\frac{\theta}{\alpha}}\|^{-2}k_0^{{\frac{\theta}{\alpha}}}+{\tfrac{\theta}{\alpha}}\big(C_\alpha\mu_0+\chi_{\frac{\theta}{\alpha}}(0)\overline{\tfrac{\theta}{\alpha}(0)}\|\tilde k_0^{\frac{\theta}{\alpha}}\|^{-2} k_0^\alpha\big)\\=S_{\frac{\theta}{\alpha}} P_{\frac{\theta}{\alpha}}(\bar\alpha\nu_0)+{\tfrac{\theta}{\alpha}}C_\alpha\mu_0+{{\chi_{\frac{\theta}{\alpha}}(0)}}\,{\|\tilde k_0^{\frac{\theta}{\alpha}}\|^{-2}}\,k_0^\theta.
\end{multline*}
Therefore, taking Corollary \ref{csymbol} into account, we conclude that $A=A^{\theta,\alpha}_{\psi'+\overline{\chi'}}$ with
$\psi'=C_\alpha P_\alpha\nu_0=C_\alpha P_\alpha(\nu- c\tilde k_0^\theta)$ and $\chi'=S_{\frac{\theta}{\alpha}} P_{\frac{\theta}{\alpha}}(\bar\alpha\nu_0)+{\frac{\theta}{\alpha}}C_\alpha\mu_0=S_{\frac{\theta}{\alpha}} P_{\frac{\theta}{\alpha}}(\bar\alpha(\nu- c\tilde k_0^\theta))+{\frac{\theta}{\alpha}}C_\alpha(\mu+\bar c\tilde k_0^\alpha)$.
\end{proof}

{\bf Acknowledgments.}{The work of the first author was partially supported by Funda\c{c}\~{a}o para a Ci\^{e}ncia e a Tecnologia (FCT/Portugal), through Project PEst-OE/EEI/LA0009/2013. The research of the third and the forth authors was financed by the Ministry of Science and Higher Education of the Republic of Poland.}

\bibliographystyle{amsplain}

\end{document}